\newtheorem{theorem}{Theorem}[section] 
\newtheorem{lemma}[theorem]{Lemma}     
\newtheorem{corollary}[theorem]{Corollary}
\newtheorem{proposition}[theorem]{Proposition}
\newtheorem{remark}[theorem]{Remark}
\newtheorem{definition}[theorem]{Definition}
\newtheorem{example}[theorem]{Example}
\newcommand{\lie}[1]{\mathfrak{#1}}
\newcommand{\duer}{^*_}
\DeclareMathOperator{\ann}{ann}
\newcommand{\R}{\mathbb{R}} 
\newcommand{\inv}{^{-1}}
\newcommand{\N}{\mathbb{N}} 
\newcommand{\mx}{\mathfrak{X}} 
\newcommand{\dr}{\mathbf{d}}
\newcommand{\ldr}[1]{{{\pounds}}_{#1}}
\newcommand{\ip}[1]{{\mathbf{i}}_{#1}}
\newcommand{\an}[1]{\arrowvert_{#1}} 
\newcommand{\llb}{\llbracket} 
\newcommand{\rrb}{\rrbracket}
\newcommand{\Beta}{\boldsymbol{\beta}}
\DeclareMathOperator{\pr}{pr}
\DeclareMathOperator{\Hom}{Hom}
\DeclareMathOperator{\Id}{Id}
\DeclareMathOperator{\der}{Der}
\newcommand{\nsp}[2]  {%
\langle\mspace{-6.8mu}%
\langle\mspace{-6.8mu}%
\langle\mspace{-6.8mu}%
\langle\mspace{-6.8mu}%
\langle\mspace{-6.8mu}%
\langle\mspace{-6.8mu}%
\langle{#1,\,}{#2}%
\rangle%
\mspace{-6.8mu}\rangle%
\mspace{-6.8mu}\rangle%
\mspace{-6.8mu}\rangle%
\mspace{-6.8mu}\rangle%
\mspace{-6.8mu}\rangle%
\mspace{-6.8mu}\rangle}
\title{The geometrisation of $\N$-manifolds of degree
  $2$.}
\author{M. Jotz Lean}
\begin{document}
\maketitle
\begin{abstract}
  This paper describes an equivalence of the canonical category of
  $\N$-manifolds of degree $2$ with a category of involutive double
  vector bundles. More precisely, we show how involutive double vector
  bundles are in duality with double vector bundles endowed with a
  linear metric.  We describe then how special sections of the metric
  double vector bundle that is dual to a given involutive double
  vector bundle are the generators of a graded manifold of degree $2$
  over the double base.

  We discuss how split Poisson $\N$-manifolds of degree $2$ are
  equivalent to \emph{self-dual representations up to homotopy} and
  so, following Gracia-Saz and Mehta, to linear splittings of a
  certain class of VB-algebroids. In other words, the equivalence of
  categories above induces an equivalence between so called
  \emph{Poisson involutive double vector bundles}, which are the dual
  objects to metric VB-algebroids, and Poisson $\N$-manifolds of degree $2$.\\

MSC: Primary: 
58A50, 
  53D17. 
 Secondary: 53B05
\end{abstract}

\tableofcontents

\section{Introduction}

Graded manifolds are omnipresent objects in the field of Poisson
geometry.  In this context, they were first considered in the early
00's in Voronov's study of Lie bialgebroids \cite{Voronov02} and in
Roytenberg's supergeometric approach to Courant algebroids
\cite{Roytenberg02}, see also \cite{Severa05}. One of our goals in
this series of papers is to make more precise the $\N$-geometric
approach to Courant algebroids, VB-algebroids and LA-courant
algebroids \cite{Li-Bland12}.

An $\N$-graded manifold over a smooth manifold $M$ is a sheaf of
$\N$-graded, graded commutative, associative, unital
$C^\infty(M)$-algebras over $M$, that is locally freely generated by
finitely many elements of strictly positive degree.  $\N$-manifolds of
degree $1$, and geometric stuctures thereon, are fully understood as
the exterior algebras of sections of smooth vector bundles. Such a
geometric understanding of $\N$-graded manifold of higher degrees does
not exist in the literature.  In the degree $2$ case, graded
manifolds with a homological vector field (Lie $2$-algebroids) were
linked to VB-Courant algebroids by Li-Bland in his thesis
\cite{Li-Bland12}, building up on the correspondence of Courant
algebroids with symplectic Lie 2-algebroids \cite{Roytenberg02,
  Severa05}: the cotangent space of a Lie $2$-algebroid becomes a
symplectic Lie $2$-algebroid with its canonical symplectic
structure. The linear properties of the cotangent space constuction
translate to an additional linear structure on the obtained Courant
algebroid.

While this nice idea is very simple to understand, the obtained
correspondence lacks very much of the concreteness of the
well-understood correspondence of Lie 1-algebroids with Lie
algebroids, and it gives little geometric insight on the meaning of
the generators of the graded algebra in terms of the double vector
bundle underlying the VB-Courant algebroid. Furthermore, morphisms are
not studied in this correspondence.

This paper remedies to this by geometrising $\N$-manifolds of degree
$2$ via a certain class of double vector bundles -- that was already
defined and considered by Pradines in his first work on double vector
bundles and nonholonomic jets \cite{Pradines77} -- and explaining the
full and rich picture behind Li-Bland's observations.  Our main result
is an explicit equivalence between the category of degree $2$
$\N$-manifolds with a category of double vector bundles with identical
sides and an involution exchanging the sides and restricting to minus
the identity on the core. Such double vector bundles were called
`symmetric double vector bundles with inverse symmetry'' by Pradines
\cite{Pradines77}.  For simplicity, we call them here
\emph{involutive double vector bundles}. We prove that the dual
objects are double vector bundles endowed with a linear metric. 

In particular, split $\N$-manifolds of degree $2$ are equivalent in
this manner to \emph{involutive} splittings of involutive double
vector bundles -- ``symmetric charts'' in \cite{Pradines77} -- or
equivalently to \emph{Lagrangian} splittings of the dual metric double vector
bundles.  Our approach is a classical one; an extension of the
construction of vector bundles over a manifold $M$ from free and
locally finitely generated sheaves of $C^\infty(M)$-modules, using the
double vector bundle charts in \cite{Pradines77}.

Let us stress out here that while the equivalence of metric double
vector bundles with $[2]$-manifolds can be seen as a special case of
Li-Bland's correspondence \cite{Li-Bland12} of Lie $2$-algebroids with
VB-Courant algebroids -- namely the one of a trivial homological
vector field versus a trivial Courant bracket -- this example is
neither explored in \cite{Li-Bland12} nor completely straightfoward to
deduce from the proof of Li-Bland's correspondence. We believe that
this example, and in particular our new interpretation of metric
double vector bundles as the dual objects to involutive double vector
bundles, is in fact of significant importance since it could lead to
geometrisations of $\mathbb N$-manifolds of higher degrees via
relatively simple `classical' geometric objects.

From our main theorem follow many enlightening results on geometric
structures on degree $2$ $\N$-manifolds and on their counterparts on
metric double vector bundles: in sequels of this paper \cite{Jotz16a,Jotz15c} we recover for instance in a
constructive manner Li-Bland's equivalences
between the category of Lie $2$-algebroids and the category of
VB-Courant algebroids, and between the category of Poisson Lie
$2$-algebroids and the category of LA-Courant algebroids
\cite{Li-Bland12}, providing along the way a more handy
definition of the latter objects than the one already existing
\cite{Li-Bland12}. Most importantly, our geometric approach to graded
manifolds of degree $2$ allows
us to describe several new classes of examples of those objects, and later
to explain the links between $2$-representations and Lie
$2$-algebroids, and between the different notions of doubles
associated to Lie bialgebroids; namely the cotangent double and the
bicrossproduct Lie algebroids \cite{Jotz16a}.  
\medskip

In the second part of this paper we focus on Poisson structures of
degree $-2$ on $\N$-manifolds of degree $2$ (\emph{Poisson
  $\N$-manifolds of degree $2$}).  We show that their splittings are
the same as \emph{self-dual $2$-term representations up to
  homotopy}. We deduce from the equivalence of $2$-term
representations up to homotopy with linear splittings of VB-algebroids
\cite{GrMe10a} that the equivalence of categories in our main theorem
induces an equivalence of Poisson $\N$-manifolds of degree $2$ with
\emph{metric VB-algebroids}, and we explain the dual picture of
Poisson involutive double vector bundles. In particular, we find that
symplectic $[2]$-manifolds are equivalent in this manner to cotangent doubles of
metric vector bundles, a particular class of involutive double vector
bundles, together with, up to a sign, the Poisson structure induced by
the canonical symplectic form.

\medskip

 Note that Grabowski, Grabowska and
Bruce propose in \cite{BrGrGr14} an alternative geometric
characterisation of $\N$-manifolds via double graded bundles, which 
simplifies to our description in the degree $2$ case.
Note also that in his PhD thesis \cite{delCarpio-Marek15}, Fernando
del Carpio-Marek finds independently, mostly through different
methods, results that are similar to some of ours. Since his results
will not be published, we summarise where appropriate the main lines
of his approach and we bridge some of his results to ours.

\subsection*{Outline, main results and applications}
This paper is organised as follows.  

\textbf{Section \ref{sect:b+d}} starts by recalling how vector bundle
morphisms are equivalent to morphisms of the sheaves of sections of
the dual bundles.  We then discuss in detail the necessary background
on double vector bundles and their dualisations and splittings. We
recall how linear splittings of VB-algebroids induce $2$-term
representations up to homotopy \cite{GrMe10a}.

\textbf{Section \ref{sec:metDVB}} recalls the definition of
$\N$-manifolds and the equivalence of $\N$-manifolds of degree $1$
with vector bundles.  We define metric double vector bundles and their
Lagrangian splittings, the existence of which we prove. We define the
dual objects, the involutive double vector bundles, and we describe
their morphisms. We
construct an equivalence of this category with the category of
$\N$-manifolds of degree $2$.

Finally we discuss the geometric meaning of the generators of the
graded algebras in terms of functions on the corresponding involutive
double vector bundles: they can be understood as functions on the
involutive double vector bundles that are polynomial in their sides,
and on which the pullback under the involution is just multiplication
by $-1$.

\textbf{Section \ref{sec:Poisson}} studies Poisson structures of
degree $-2$ on $\N$-manifolds of degree $2$.  We show how a Poisson
structure of degree $-2$ on a split $\N$-manifold of degree $2$ is
equivalent to a $2$-term representation up to homotopy that is dual to
itself. Then we give the geometrisation of Poisson $\N$-manifolds of
degree $2$; namely linear Lie algebroids structures on metric double
vector bundles, that are compatible with the metric, or equivalently,
double linear Poisson structures on involutive double vector bundles,
such that the involution is anti-Poisson.  We prove that the
equivalence of categories established in the previous section induces
an equivalence of the category of Poisson $\N$-manifolds of degree $2$
with the category of Poisson involutive double vector
bundles. Finally, we discuss some examples of Poisson $\N$-manifolds
of degree $2$, and of the corresponding metric VB-algebroids and
Poisson involutive double vector bundles. We discuss in detail
symplectic $\N$-manifolds of degree $2$, and show how they correspond
to symplectic cotangent doubles of metric vector bundles.

\subsection*{Acknowledgements}

The author warmly thanks Henrique Bursztyn, Fernando del Carpio Marek,
David Li-Bland, Rajan Mehta, Dmitry Roytenberg, Arkady Vaintrob, Alan
Weinstein and Chenchang Zhu for interesting conversations or
comments. The author specially thanks Malte Heuer for his careful
reading, and  Rohan Jotz Lean for many very
useful comments on earlier versions of this work.

\subsection*{Notation and conventions}

We write $p_M\colon TM\to M$, $q_E\colon E\to M$ for vector bundle
projections and $\pi_A\colon D\to A$ and $\pi_B\colon D\to B$ for the
two ``top'' vector bundle projections of a double vector bundle.  For
a vector bundle $Q\to M$ we often identify without further mentioning
the vector bundle $(Q^*)^*$ with $Q$ via the canonical isomorphism. We
write $\langle\cdot\,,\cdot\rangle$ or $\langle\cdot\,,\cdot\rangle_M$
for the canonical pairing of a vector bundle with its dual;
i.e.~$\langle a_m,\alpha_m\rangle=\alpha_m(a_m)$ for $a_m\in A$ and
$\alpha_m\in A^*$. We use many different pairings; in general,
which pairing is used is clear from its arguments.  Given a
section $\varepsilon$ of $E^*$, we write $\ell_\varepsilon\colon E\to
\R$ for the linear function associated to it, i.e. the function
defined by $e_m\mapsto \langle \varepsilon(m), e_m\rangle$ for all
$e_m\in E$.

We assume all manifolds to be connected.
Let $M$ be a smooth manifold. We denote by $\mx(M)$ and
$\Omega^1(M)$ the sheaves of smooth sections of the tangent and
the cotangent bundle, respectively. For an arbitrary vector bundle
$E\to M$, the sheaf of sections of $E$ is written
$\Gamma(E)$.  Let $f\colon M\to N$ be a smooth map between two smooth
manifolds $M$ and $N$.  Then two vector fields $X\in\mx_U(M)$ and
$Y\in\mx_V(N)$ are said to be \textbf{$f$-related} if $Tf\circ X=Y\circ
f$ on $U\cap f\inv(V)$.  We write then $X\sim_f Y$.

We write ``$[n]$-manifold'' for ``$\N$-manifold of degree $n$''. 
We write
``$2$-representations'' for ``$2$-term representations up to homotopy''.

\section{Background and definitions on double vector bundles and VB-algebroids}
\label{sect:b+d}
We collect in this section background on vector bundles and their
morphisms, on double vector bundles and their linear splittings, on
VB-algebroids and their encoding by $2$-representations. Further
references will be given throughout the text.

\subsection{Vector bundles and morphisms}\label{usual_VB_morphisms}
Let $A\to M$ and $B\to N$ be vector bundles and $\omega\colon A\to B$
a morphism of vector bundles over a smooth map $\omega_0\colon M\to
N$.  First we introduce a few notations. We say that $a\in\Gamma_U(A)$
is $\omega$-related to $b\in\Gamma_V(B)$ if
\[ \omega\circ a=b\circ\omega_0
\] 
on $U\cap \omega_0\inv(V)$. We write then $a\sim_\omega b$.  We write
$\omega_0^*B\to M$ for the pullback of $B$ under $\omega_0$; for $m\in
M$, elements of $(\omega_0^*B)(m)$ are pairs $(m,b_{\omega_0(m)})$
with $b_{\omega_0(m)}\in B(\omega_0(m))$.

 The dual of a morphism
$\omega\colon A\to B$ over $\omega_0\colon M\to N$ is in general not a morphism
of vector bundles, but a morphism $\omega^\star$ of
modules over $\omega_0^*\colon C^\infty(N)\to C^\infty(M)$:
\begin{equation}\label{dual_of_VB_map}
\omega^\star\colon \Gamma(B^*)\to \Gamma(A^*),
\qquad  \omega^\star(\beta)(m)=\omega_m^*\beta_{\omega_0(m)}
\end{equation}
for all $\beta\in\Gamma(B^*)$ and $m\in M$. 
We prove the following lemma in Appendix \ref{morphisms_VB_sheaves}.
\begin{lemma}\label{bundlemap_eq_to_morphism}
  The map $\cdot^\star$, that sends a morphism of vector bundles
  $\omega\colon A\to B$ over $\omega_0\colon M\to N$ to the morphism
  $\omega^\star\colon\Gamma(B^*)\to\Gamma(A^*)$ of modules over
  $\omega_0^*\colon C^\infty(N)\to C^\infty(M)$, is a bijection.
\end{lemma}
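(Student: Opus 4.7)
The plan is to prove injectivity and surjectivity separately; the surjectivity direction carries the main difficulty, which is converting $C^\infty$-linearity of a global module morphism into a pointwise statement.

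Injectivity is straightforward. If $\omega_1,\omega_2\colon A\to B$ are two vector bundle morphisms over $\omega_0$ with $\omega_1^\star=\omega_2^\star$, then for each $m\in M$ and each $\beta\in\Gamma(B^*)$ one has $\omega_{1,m}^*(\beta(\omega_0(m)))=\omega_{2,m}^*(\beta(\omega_0(m)))$. Since every element of $B^*_{\omega_0(m)}$ extends to a global section, the fibre maps $\omega_{1,m}^*$ and $\omega_{2,m}^*$ coincide for all $m$, and dualising gives $\omega_1=\omega_2$.

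For surjectivity, fix a module morphism $\Phi\colon\Gamma(B^*)\to\Gamma(A^*)$ over $\omega_0^*$. The key claim is the \emph{locality property}: if $\beta(\omega_0(m))=0$, then $\Phi(\beta)(m)=0$. I would prove this by the standard bump function trick: choose a local frame $\beta_1,\ldots,\beta_k$ of $B^*$ on a neighbourhood $V$ of $\omega_0(m)$ so that $\beta=\sum_i g_i\beta_i$ on $V$ with $g_i(\omega_0(m))=0$, and take $\chi\in C^\infty(N)$ equal to $1$ near $\omega_0(m)$ and supported in $V$. Then the identity $\chi^2\beta=\sum_i(\chi g_i)(\chi\beta_i)$ holds globally, and applying $\Phi$ together with the module property over $\omega_0^*$ yields
\[
\Phi(\chi^2\beta)(m)=\sum_i\omega_0^*(\chi g_i)(m)\,\Phi(\chi\beta_i)(m)=\sum_i\chi(\omega_0(m))\,g_i(\omega_0(m))\,\Phi(\chi\beta_i)(m)=0.
\]
On the other hand $\Phi(\chi^2\beta)=\omega_0^*(\chi^2)\,\Phi(\beta)$, which at $m$ equals $\Phi(\beta)(m)$ because $\chi(\omega_0(m))=1$. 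Locality follows, and hence $\Phi(\beta)(m)$ depends only on $\beta(\omega_0(m))$.

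Locality permits the definition of an $\R$-linear map $\omega_m^*\colon B^*_{\omega_0(m)}\to A^*_m$ by $\omega_m^*(\xi):=\Phi(\tilde\xi)(m)$ for any extension $\tilde\xi\in\Gamma(B^*)$ of $\xi$; this dualises to a linear map $\omega_m\colon A_m\to B_{\omega_0(m)}$. Smoothness of the assembled map $\omega\colon A\to B$ reduces, via the pairing, to checking that for each $a\in\Gamma(A)$ and each $\beta\in\Gamma(B^*)$ the function $m\mapsto\langle\omega_m(a(m)),\beta(\omega_0(m))\rangle=\langle a(m),\Phi(\beta)(m)\rangle$ is smooth, which is clear. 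The equality $\omega^\star=\Phi$ is then immediate from the construction of $\omega_m^*$. The main obstacle throughout is the locality step; everything else is routine bookkeeping.
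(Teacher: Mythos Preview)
Your proof is correct and follows essentially the same approach as the paper: both establish the locality property (that $\Phi(\beta)(m)$ depends only on $\beta(\omega_0(m))$) via a bump-function argument, then define the inverse map fibrewise and check smoothness through the pairing. Your treatment of locality is in fact more careful than the paper's, which simply writes ``without loss of generality, assume that $\beta$ can be written as $f\cdot\beta'$ with $f(\mu_0(m))=0$'' without spelling out the cutoff; your $\chi^2$-trick makes this step fully rigorous.
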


Note finally that for $\beta\in\Gamma_N(B^*)$ and $\ell_\beta\in C^{\infty, \rm{lin}}(B)$, we have 
\begin{equation}\label{pullback_linear}
\ell_{\omega^\star\beta}=\omega^*\ell_\beta\in C^{\infty, \rm{lin}}(A).
\end{equation}

\subsection{Double vector bundles}
We briefly recall the definitions of double vector bundles, of their
\emph{linear} and \emph{core} sections, and of their \emph{linear
  splittings} and \emph{lifts}. We refer to
\cite{Pradines77,Mackenzie05,GrMe10a} for more details.

A \textbf{double vector bundle} $(D;A,B;M)$ is a smooth manifold $D$
with two vector bundle structures over the total spaces $A$ and $B$ of
two vector bundles with base $M$, such that the square
\begin{equation*}
\begin{xy}
\xymatrix{
D \ar[r]^{\pi_B}\ar[d]_{\pi_A}& B\ar[d]^{q_B}\\
A\ar[r]_{q_A} & M}
\end{xy}
\end{equation*}
of vector bundle projections is commutative and the following 
conditions are satisfied:
\begin{enumerate}
\item $\pi_B$ is a vector bundle morphism over $q_A$;
\item $+_B\colon D\times_B D \rightarrow D$ is a vector bundle
morphism over $+\colon A\times_M A \rightarrow A$, where $+_B$ is
the addition map for the vector bundle $D\rightarrow B$, and 
\item the scalar multiplication $\R\times D\to D$ in the bundle $D\to
  B$ is a vector bundle morphism over the scalar multiplication
  $\R\times A\to A$.
\end{enumerate}
(Note that the notation $(D;A,B;M)$ is omissive: all structure maps
are of course part of the data of a double vector bundle.)

The corresponding statements to (1)--(3) for the operations in the
bundle $D\to A$ follow.  Note that the condition that each addition in
$D$ is a morphism with respect to the other is exactly
\begin{equation}\label{add_add} (d_1+_Ad_2)+_B(d_3+_Ad_4)=(d_1+_Bd_3)+_A(d_2+_Bd_4)
\end{equation}
for $d_1,d_2,d_3,d_4\in D$ with $\pi_A(d_1)=\pi_A(d_2)$,
$\pi_A(d_3)=\pi_A(d_4)$ and $\pi_B(d_1)=\pi_B(d_3)$,
$\pi_B(d_2)=\pi_B(d_4)$.

Given a double vector bundle $(D; A, B; M)$, the vector bundles $A$
and $B$ are called the \textbf{side bundles}. The \textbf{core} $C$ of
a double vector bundle is the intersection of the kernels of $\pi_A$
and of $\pi_B$. By \eqref{add_add}, adding over $A$ or over $B$
elements of the core yields the same result, and $C$ gets a natural vector
bundle structure over $M$, the projection of which we call $q_C\colon
C \rightarrow M$. The inclusion $C \hookrightarrow D$ is usually
denoted by $C_m \ni c \longmapsto \overline{c} \in \pi_A^{-1}(0^A_m)
\cap \pi_B^{-1}(0^B_m)$.

Given a double vector bundle $(D;A,B;M)$, the space of sections
$\Gamma_B(D)$ is generated as a $C^{\infty}(B)$-module by two
distinguished classes of sections (see \cite{Mackenzie11}), the
\emph{linear} and the \emph{core sections} which we now describe.  For
a smooth section $c\colon M \rightarrow C$, the corresponding
\textbf{core section} $c^\dagger\colon B \rightarrow D$ is defined as
\begin{equation}\label{core_section}
c^\dagger(b_m) = 0^D_{b_m} +_A \overline{c(m)}, \,\, m \in M, \, b_m \in B_m.
\end{equation}
We denote the corresponding core section $A\to D$ by $c^\dagger$ also,
relying on the argument to distinguish between them. The space of core
sections of $D$ over $B$ is written $\Gamma_B^c(D)$.  A section
$\xi\in \Gamma_B(D)$ is called \textbf{linear} if $\xi\colon B
\rightarrow D$ is a bundle morphism from $B \rightarrow M$ to $D
\rightarrow A$ over a section $a\in\Gamma(A)$.  The space of linear
sections of $D$ over $B$ is denoted by $\Gamma^\ell_B(D)$. A section
$\psi\in \Gamma(B^*\otimes C)$ defines a linear section
$\tilde{\psi}\colon B\to D$ over the zero section $0^A\colon M\to A$
by
$\widetilde{\psi}(b_m) = 0^D_{b_m}+_A \overline{\psi(b_m)}$
for all $b_m\in B$.  We call $\widetilde{\psi}$ a \textbf{core-linear
  section}.

\begin{example}\label{trivial_dvb}
  Let $A, \, B, \, C$ be vector bundles over $M$ and consider
  $D=A\times_M B \times_M C$. With the vector bundle structures
  $D=q^{!}_A(B\oplus C) \to A$ and $D=q_B^{!}(A\oplus C) \to B$, one
  finds that $(D; A, B; M)$ is a double vector bundle called the
  \textit{decomposed double vector bundle with sides $A$ and $B$ and
    core $C$}. The core sections are given by
$c^\dagger\colon b_m \mapsto (0^A_m, b_m, c(m))$, where $m \in M$,  $b_m \in
B_m$, $c \in \Gamma(C)$,
and similarly for $c^\dagger\colon A\to D$.  The space of linear
sections $\Gamma^\ell_B(D)$ is naturally identified with
$\Gamma(A)\oplus \Gamma(B^*\otimes C)$ via
$$
(a, \psi): b_m \mapsto (a(m), b_m, \psi(b_m)), \text{ where } \psi \in
\Gamma(B^*\otimes C), \, a\in \Gamma(A).
$$

In particular, the fibered product $A\times_M B$ is a double vector
bundle over the sides $A$ and $B$ and its core is the trivial bundle
over $M$.
\end{example}
\begin{definition}
  Let $(D_1; A_1, B_1; M_1)$ and $(D_2; A_2, B_2; M_2)$ be two double
  vector bundles. A double vector bundle morphism $(\Psi; \psi_A,
  \psi_B; \psi_0)$ from $D_1$ to $D_2$ is a commutative cube
{\small\begin{equation*}
  \begin{xy}
    \xymatrix{D_1\ar[rrr]^{\Psi}\ar[rd]^{\pi_{A_1}}\ar[dd]_{\pi_{B_1}}& && D_2\ar[rd]^{\pi_{A_2}}\ar[dd]
      &\\
& A_1\ar[dd]\ar[rrr]^{\psi_A}&&&A_2\ar[dd]^{q_{A_2}}\\
      B_1\ar[rd]_{q_{B_1}}\ar[rrr]^{\psi_B}&&&B_2\ar[rd]^{q_{B_2}}&\\
&M_1\ar[rrr]^{\psi_0}&&&M_2}
\end{xy}
\end{equation*}}
where all the faces are vector bundle morphisms. 
\end{definition}

Given a double vector bundle morphism $(\Psi; \psi_A, \psi_B;
\psi_0)$, its restriction to the core bundles induces a vector bundle
morphism $\psi_c\colon C_1 \to C_2$:
\[\Psi(\overline{\tau})=\overline{\psi_c(\tau)}
\] 
for all $\tau\in C_1$.  $\psi_C$ is called the \textbf{core morphism} of
$\Psi$.

Note that given $c\in\Gamma(C_2)$, we have
$\Psi^\star(c^\dagger)=(\psi_c^\star(c))^\dagger$ for
$c^\dagger\in \Gamma^c_{A_2}(D_2)$ or
$c^\dagger\in \Gamma^c_{B_2}(D_2)$. If $\chi\in\Gamma_{A_2}^l(D_2)$ is
linear over $b\in\Gamma(B_2)$, then
$\Psi^\star(\chi)\in\Gamma_{A_1}^l(D_1)$ is linear over
$\psi_B^\star(b)$. Similarly, if $\chi\in\Gamma_{B_2}^l(D_2)$ is
linear over $a\in\Gamma(A_2)$, then
$\Psi^\star(\chi)\in\Gamma_{B_1}^l(D_1)$ is linear over
$\psi_A^\star(a)$.  Furthermore, we have
$\Psi^\star(q_{A_2}^*f\cdot\chi)=q_{A_1}^*(\psi_0^*f)\cdot\Psi^\star(\chi)$
for all $f\in C^\infty(M_2)$ and $\chi \in \Gamma_{A_2}^l(D_2)$.

\subsubsection{Linear splittings and lifts}
\label{subsub:lsl}
A \textbf{linear splitting} of $(D; A, B; M)$ is an injective morphism
of double vector bundles $\Sigma\colon A\times_M B\hookrightarrow D$
over the identity on the sides $A$ and $B$.  That every double vector
bundle admits local linear splittings was proved by \cite{GrRo09}  (see
 also \cite{delCarpio-Marek15} for a more elementary proof).  Local
linear splittings are equivalent to double vector bundle
charts. Pradines originally defined double vector bundles as
topological spaces with an atlas of double vector bundle charts
\cite{Pradines74a} (see Definition \ref{double_atlas}). Using a
partition of unity, he proved that (provided the double base is a
smooth manifold) this implies the existence of a global double
splitting \cite{Pradines77}. Hence, any double vector bundle in the
sense of our definition admits a (global) linear splitting.

Note that a linear splitting of $D$ is equivalent to a
\textbf{decomposition} of $D$, i.e.~an isomorphism $\mathbb I\colon
A\times_MB\times_MC\to D$ of double vector bundles over the identities
on the sides and inducing the identity on the core. Given a linear
splitting $\Sigma$, the corresponding decomposition $\mathbb I$ is
given by $\mathbb I(a_m,b_m,c_m)=\Sigma(a_m,b_m)+_B(\tilde
0_{\vphantom{1}_{b_m}} +_A \overline{c_m})$.  Given a decomposition
$\mathbb I$, the corresponding linear splitting $\Sigma$ is given by
$\Sigma(a_m,b_m)=\mathbb I(a_m,b_m,0^C_m)$.

\medskip

A linear splitting $\Sigma$ of $D$ is also equivalent to a splitting
$\sigma_A$ of the short exact sequence of $C^\infty(M)$-modules
\begin{equation}\label{fat_seq_gamma}
0 \longrightarrow \Gamma(B^*\otimes C) \hookrightarrow \Gamma^\ell_B(D) 
\longrightarrow \Gamma(A) \longrightarrow 0,
\end{equation}
where the first map sends $\phi\in\Gamma(B^*\otimes C)$ to
$\tilde\phi\in\Gamma^\ell_B(D)$ and the third map is the map that
sends a linear section $(\xi,a)$ to its base section $a\in\Gamma(A)$.
The splitting $\sigma_A$ is called a \textbf{horizontal lift}. Given
$\Sigma$, the horizontal lift $\sigma_A\colon \Gamma(A)\to
\Gamma_B^\ell(D)$ is given by $\sigma_A(a)(b_m)=\Sigma(a(m), b_m)$ for
all $a\in\Gamma(A)$ and $b_m\in B$.  By the symmetry of a linear
splitting, we find that a lift $\sigma_A\colon
\Gamma(A)\to\Gamma_B^\ell(D)$ is equivalent to a lift $\sigma_B\colon
\Gamma(B)\to \Gamma_A^\ell(D)$.  Given a lift
$\sigma_A\colon\Gamma(A)\to\Gamma^\ell_B(D)$, the corresponding lift
$\sigma_B\colon\Gamma(B)\to\Gamma^\ell_A(D)$ is given by
$\sigma_B(b)(a(m))=\sigma_A(a)(b(m))$ for all $a\in\Gamma(A)$,
$b\in\Gamma(B)$.

Note that two linear splittings $\Sigma^1,\Sigma^2\colon
A\times_MB\to D$ differ by a section $\phi_{12}$ of $A^*\otimes
B^*\otimes C\simeq \operatorname{Hom}(A,B^*\otimes C)\simeq
\operatorname{Hom}(B,A^*\otimes C)$ in the following sense.  For each
$a\in\Gamma(A)$ the difference $\sigma_A^1(a)-_B\sigma_A^2(a)$ of
horizontal lifts is the core-linear section defined by
$\phi_{12}(a)\in\Gamma(B^*\otimes C)$. By symmetry,
$\sigma_B^1(b)-_A\sigma_B^2(b)=\widetilde{\phi_{12}(b)}$ for each
$b\in\Gamma(B)$.


\subsubsection{The tangent double of a vector bundle}\label{tangent_double}
Let $q_E\colon E\to M$ be a vector bundle.  Then the tangent bundle
$TE$ has two vector bundle structures; one as the tangent bundle of
the manifold $E$, and the second as a vector bundle over $TM$. The
structure maps of $TE\to TM$ are the derivatives of the structure maps
of $E\to M$. The space $TE$ is a double vector bundle with core bundle
$E \to M$. \begin{equation*}
\begin{xy}
\xymatrix{
TE \ar[d]_{Tq_E}\ar[r]^{p_E}& E\ar[d]^{q_E}\\
 TM\ar[r]_{p_M}& M}
\end{xy}
\end{equation*}
The map $\bar{}\,\colon E\to p_E^{-1}(0^E)\cap (Tq_E)^{-1}(0^{TM})$
sends $e_m\in E_m$ to $\bar
e_m=\left.\frac{d}{dt}\right\an{t=0}te_m\in T_{0^E_m}E$.  The core
vector field corresponding to $e \in \Gamma(E)$ is the vertical lift
$e^{\uparrow}\colon E \to TE$, i.e.~the vector field with flow
$\phi\colon E\times \R\to E$, $\phi(e'_m, t)=e'_m+te(m)$. An element
of $\Gamma^\ell_E(TE)=\mx^\ell(E)$ is called a \textbf{linear vector
  field}. Since its flow is a flow of vector bundle morphisms, a
linear vector field sends linear functions to linear functions and
pullbacks to pullbacks. It is well-known (see e.g.~\cite{Mackenzie05})
that a linear vector field $\xi\in\mx^l(E)$ covering $X\in\mx(M)$ is
equivalent to a derivation $\delta_\xi^*\colon \Gamma(E^*) \to \Gamma(E^*)$
over $X\in \mx(M)$, and hence to the dual derivation
$\delta_\xi\colon\Gamma(E)\to\Gamma(E)$ over $X\in \mx(M)$. The precise
correspondence is given by
\begin{equation}\label{ableitungen}
\xi(\ell_{\varepsilon}) 
= \ell_{\delta_\xi^*(\varepsilon)} \,\,\,\, \text{ and }  \,\,\, \xi(q_E^*f)= q_E^*(X(f))
\end{equation}
for all $\varepsilon\in\Gamma(E^*)$ and $f\in C^\infty(M)$.  
We write $\widehat \delta$ for the linear vector field
in $\mx^l(E)$ corresponding in this manner to a derivation $\delta$ of
$\Gamma(E)$.  The choice of a linear splitting $\Sigma$ for $(TE; TM,
E; M)$ is equivalent to the choice of a connection on $E$: Since a
linear splitting gives us a linear vector field
$\sigma_{TM}(X)\in\mx^l(E)$ for each $X\in \mx(M)$, we can define
$\nabla\colon \mx(M)\times\Gamma(E)\to \Gamma(E)$ by
$\sigma_{TM}(X)=\widehat{\nabla_X}$ for all $X\in\mx(M)$. Conversely,
a connection $\nabla\colon \mx(M)\times\Gamma(E)\to\Gamma(E)$ defines
a lift $\sigma_{TM}^\nabla$ for $(TE; TM, E; M)$ and a linear
splitting $\Sigma^\nabla\colon TM\times_M E \to TE$.

We recall as well the relation between the connection and the Lie
bracket of vector fields on $E$.  Given $\nabla$, it is easy to see
using the equalities in \eqref{ableitungen} that, writing $\sigma$ for
$\sigma_{TM}^\nabla$:
\begin{equation}\label{Lie_bracket_VF}
  \left[\sigma(X), \sigma(Y)\right]=\sigma[X,Y]-\widetilde{R_\nabla(X,Y)},\qquad
  \left[\sigma(X), e^\uparrow\right]=(\nabla_Xe)^\uparrow,\qquad
  \left[e^\uparrow,e'^\uparrow\right]=0,
\end{equation}
for all $X,Y\in\mx(M)$ and $e,e'\in\Gamma(E)$.  That is, the Lie
bracket of vector fields on $M$ and the connection encode completely
the Lie bracket of vector fields on $E$.

\medskip

Now let us have a quick look at the other structure on the double
vector bundle $TE$. The lift
$\sigma_{E}^\nabla\colon\Gamma(E)\to\Gamma_{TM}^\ell(TE)$ is given by
\begin{equation*}
  \sigma_{E}^\nabla(e)(v_m) = T_me(v_m) +_{TM} (T_m0^E(v_m) -_E \overline{\nabla_{v_m} e}), \,\, v_m \in TM, \, e \in \Gamma(E).
\end{equation*}
Further, for $e\in\Gamma(E)$, the core section $e^\dagger\in\Gamma_{TM}(TE)$
is given by
\begin{equation*}
 e^\dagger(v_m)=T_m0^E(v_m)+_E\left.\frac{d}{dt}\right\an{t=0}te(m).
\end{equation*}

\subsubsection{Dualisation and lifts}\label{dual}
Double vector bundles can be dualised in two distinct ways.  We denote
by $D\duer A$ the dual of $D$ as a vector bundle over $A$ and likewise
for $D\duer B$. The dual $D\duer A$ is again a double vector
bundle, with
side bundles $A$ and $C^*$ and core $B^*$
\cite{Mackenzie99,Mackenzie11}.$$ 
{\xymatrix{
    D\ar[r]^{\pi_B}\ar[d]_{\pi_A}&   B\ar[d]^{q_{B}}\\
    A\ar[r]_{q_A}                   &  M\\
  }} \qquad\qquad {\xymatrix{
    D\duer A \ar[r]^{\pi_{C^*}}\ar[d]&   C^*\ar[d]^{q_{C^*}}\\
    A\ar[r]_{q_{A}}                   &  M\\
  }} \qquad\qquad {\xymatrix{
    D\duer B \ar[r]\ar[d]&   B\ar[d]^{q_B}\\
    C^*\ar[r]_{q_{C^*}}                   &  M\\
  }}
$$  The projection $\pi_{C^*}\colon D\duer A\to C^*$ is defined as
follows: if $\Phi\in D\duer A$ projects to $\pi_A(\Phi)=a_m$, then
$\pi_{C^*}(\Phi)\in C^*_m$ is defined by
$\pi_{C^*}(\Phi)(c_m)=\Phi(0^D_{a_m}+_B\overline{c_m})$ for all
$c_m\in C_m$. The addition in the fibers of the vector bundle $D\duer
A\to C^*$ is defined as follows: if $\Phi_1$ and $\Phi_2\in D\duer A$
satisfy $\pi_{C^*}(\Phi_1)=\pi_{C^*}(\Phi_2)$, $\pi_A(\Phi_1)=a^1_m$
and $\pi_A(\Phi_2)=a^2_m$, then $\Phi_1+_{C^*}\Phi_2$ is defined by
\[(\Phi_1+_{C^*}\Phi_2)(d_1+_Bd_2)=\Phi_1(d_1)+\Phi_2(d_2)
\]
for all $d_1,d_2\in D$ with $\pi_B(d_1)=\pi_B(d_2)$ and
$\pi_A(d_1)=a^1_m$, $\pi_A(d_2)=a^2_m$.  The core element
$\overline{\beta_m}\in D\duer A$ defined by $\beta_m\in B^*$ is given
by $\overline{\beta_m}(d)=\beta_m(\pi_B(d))$ for all $d\in D$ with
$\pi_A(d)=0^A_m$.  By playing with the vector bundle structures on
$D\duer A$ and \eqref{add_add}, one can show that each core element of
$D\duer A$ is of this form. We encourage the reader who is not
familiar with the dualisations of double vector bundles to check this,
and also to find out where the projection to $C^*$ is relevant in the
definition of the addition over $C^*$.

Given a linear splitting $\Sigma\colon A\times_M B\to D$ the ``dual''
linear spliting $\Sigma^\star\colon A\times_M C^*\to D\duer A$ is
defined by
\begin{equation}\label{lemma_dual_splitting}
\langle \Sigma^\star(a_m,\gamma_m),\Sigma(a_m,b_m)\rangle_A=0, \qquad
\langle
\Sigma^\star(a_m,\gamma_m),c^\dagger(a_m)\rangle_A=\langle\gamma_m,
c(m)\rangle
\end{equation}
for all $a_m\in A$, $c\in\Gamma(C)$, $b_m\in B$ and $\gamma_m\in C^*$.
(See \cite{GrJoMaMe14} for a more complicated construction of the dual
splitting. We let the reader check that the two constructions yield
the same splitting.) 

\subsubsection{Canonical (up to sign) pairing of $D\duer A$ with $D\duer B$}\label{fat_pairing_def}
The vector bundles $D\duer A\to C^*$ and $D\duer B\to C^*$ are, up to
a sign, naturally in duality to each other \cite{Mackenzie05}. The
pairing
\[ \nsp{\cdot\,}{\cdot} \colon D\duer A\times_{C^*} D\duer B\to \mathbb R
\]  
is defined as follows: 
for $\Phi\in D\duer A$ and $\Psi\in D\duer B$ projecting to the same element
$\gamma_m$ in $C^*$, choose $d\in D$ with $\pi_A(d)=\pi_A(\Phi)$ and
$\pi_B(d)=\pi_B(\Psi)$.  Then $\langle \Phi, d\rangle_A-\langle \Psi,d\rangle_B=:\nsp{\Phi}{\Psi}$ 
does not depend on the choice of $d$.
This implies in particular that $D\duer A$ is canonically (up to a
sign) isomorphic to $(D\duer B)\duer {C^*}$ and $D\duer B$ is isomorphic
to $(D\duer A)\duer {C^*}$.

\subsection{VB-algebroids}
\label{subsect:VBa}
Let $(D; A, B; M)$ be a double vector bundle with core $C$.
Then $(D \to B; A \to M)$ is a \textbf{VB-algebroid}
(\cite{Mackenzie98x}; see also \cite{GrMe10a}) if $D \to B$ has a Lie
algebroid structure the anchor of which is a bundle morphism
$\Theta_B\colon D \to TB$ over $\rho_A\colon A \to TM$ and such that
the Lie bracket is linear:
\begin{equation*} [\Gamma^\ell_B(D), \Gamma^\ell_B(D)] \subset
  \Gamma^\ell_B(D), \qquad [\Gamma^\ell_B(D), \Gamma^c_B(D)] \subset
  \Gamma^c_B(D), \qquad [\Gamma^c_B(D), \Gamma^c_B(D)]= 0.
\end{equation*}
The vector bundle $A\to M$ is then also a Lie algebroid, with anchor
$\rho_A$ and bracket defined as follows: if $\xi_1,
\xi_2\in\Gamma^\ell_B(D)$ are linear over $a_1,a_2\in\Gamma(A)$, then
the bracket $[\xi_1,\xi_2]$ is linear over $[a_1,a_2]$.  We also say
that the Lie algebroid structure on $D\to B$ is linear over the Lie
algebroid $A\to M$.

Since the anchor $\Theta_B$ is linear, it sends a core section
$c^\dagger$, $c\in\Gamma(C)$ to a vertical vector field on $B$.  This
defines the \textbf{core-anchor} $\partial_B\colon C\to B$; for
$c\in\Gamma(C)$ we have $\Theta_B(c^\dagger)=(\partial_Bc)^\uparrow$
(see \cite{Mackenzie92}).

\begin{example}\label{td}
  It is easy to see from the considerations in \S\ref{tangent_double}
  that the tangent double $(TE;E,TM;M)$ of a vector bundle $E\to M$
  has a VB-algebroid structure \linebreak $(TE\to E, TM\to M)$. 
\end{example}

\subsection{Representations up to homotopy}

Let $A\to M$ be a Lie algebroid and consider an $A$-connection
$\nabla$ on a vector bundle $E\to M$.  Then the space
$\Omega^\bullet(A,E)$ of $E$-valued Lie algebroid forms has an induced
degree $1$ operator $\dr_\nabla$ given by:
\begin{equation*}
\begin{split}
  \dr_\nabla\omega(a_1,\ldots,a_{k+1})=&\sum_{i<j}(-1)^{i+j}\omega([a_i,a_j],a_1,\ldots,\hat a_i,\ldots,\hat a_j,\ldots, a_{k+1})\\
  &\qquad +\sum_i(-1)^{i+1}\nabla_{a_i}(\omega(a_1,\ldots,\hat
  a_i,\ldots,a_{k+1}))
\end{split}
\end{equation*}
for all $\omega\in\Omega^k(A,E)$ and $a_1,\ldots,a_{k+1}\in\Gamma(A)$.
We have $\dr_\nabla(\alpha \wedge \omega) = \dr_A\alpha \wedge \omega
+(-1)^{|\alpha|} \alpha \wedge \dr_\nabla\omega$ for $\alpha \in
\Gamma(\wedge A^*)$ and $\omega \in \Omega(A,E)$ and $\dr_\nabla^2=0$ if
and only if the connection $\nabla$ is flat; that is, if and only if
$\nabla$ defines a representation of $A$ on $E$.  Let $\mathcal E=
\bigoplus_{k\in \mathbb{Z}} E_k[k]$ be now a graded vector
bundle. Consider the space $\Omega(A,\mathcal E)$ with grading given
by
$
\Omega(A,\mathcal E)[k] = \bigoplus_{i+j=k}\Omega^i(A, E_j)
$.

\begin{definition}\cite{ArCr12}\cite{GrMe10a} 
  A \emph{representation up to homotopy of $A$ on $\mathcal E$} is a
  map \linebreak$\mathcal D\colon \Omega(A, \mathcal E) \to \Omega(A,\mathcal
  E)$ with total degree $1$ and such that $\mathcal D^2=0$ and
\[\mathcal D(\alpha \wedge \omega) = \dr_A\alpha \wedge \omega +
(-1)^{|\alpha|} \alpha \wedge \mathcal D(\omega),\] for $\alpha
\in \Gamma(\wedge A^*)$, $\omega \in \Omega(A,\mathcal E)$,
where $\dr_A\colon \Gamma(\wedge A^*) \to \Gamma(\wedge A^*)$ is the
Lie algebroid differential.
\end{definition}

Let $A$ be a Lie algebroid. The representations up to homotopy which
we consider are always on graded vector bundles $\mathcal E=
E_0[0]\oplus E_1[1]$ concentrated on degrees 0 and 1, so called
\emph{$2$-term graded vector bundles}.  These representations are
equivalent to the following data (see \cite{ArCr12,GrMe10a}):
\begin{enumerate}
\item [(1)] a vector bundle morphism $\partial\colon E_0\to E_1$,
\item [(2)] two $A$-connections, $\nabla^0$ and $\nabla^1$ on $E_0$
  and $E_1$, respectively, such that $\partial \circ \nabla^0 =
  \nabla^1 \circ \partial$, \item [(3)] an element $R \in \Omega^2(A,
  \Hom(E_1, E_0))$ such that $R_{\nabla^0} = R\circ \partial$,
  $R_{\nabla^1}=\partial \circ R$ and $\dr_{\nabla^{\Hom}}R=0$,
  where $\nabla^{\Hom}$ is the connection induced on $\Hom(E_1,E_0)$
  by $\nabla^0$ and $\nabla^1$.
\end{enumerate}
For brevity we call such a 2-term representation up to homotopy a
\textbf{2-re\-pre\-sen\-ta\-tion}.

\subsection{2-representations and VB-algebroids}
\label{subsect:ruths}
Let $(D\to B, A\to M)$ be a VB-algebroid and choose a linear splitting
$\Sigma\colon A\times_MB\to D$. Since the anchor of a linear section
is linear, for each $a\in \Gamma(A)$ the vector field
$\Theta_B(\sigma_A(a))$ defines a derivation of $\Gamma(B)$ with
symbol $\rho(a)$ (see \S \ref{tangent_double}). This defines a linear
connection $\nabla^{B}\colon \Gamma(A)\times\Gamma(B)\to\Gamma(B)$:
\[\Theta_B(\sigma_A(a))=\widehat{\nabla_a^{B}}\]
for all $a\in\Gamma(A)$.  Since the bracket of a linear section with a
core section is again a core section, we find a linear connection
$\nabla^{C}\colon\Gamma(A)\times\Gamma(C)\to\Gamma(C)$ such
that \[[\sigma_A(a),c^\dagger]=(\nabla_a^{C}c)^\dagger\] for all
$c\in\Gamma(C)$ and $a\in\Gamma(A)$.  The difference
$\sigma_A[a_1,a_2]-[\sigma_A(a_1), \sigma_A(a_2)]$ is a core-linear
section for all $a_1,a_2\in\Gamma(A)$.  This defines a vector valued
Lie algebroid form \linebreak $R\in\Omega^2(A,\operatorname{Hom}(B,C))$ such that
\[[\sigma_A(a_1), \sigma_A(a_2)]=\sigma_A[a_1,a_2]-\widetilde{R(a_1,a_2)},
\]
for all $a_1,a_2\in\Gamma(A)$. See \cite{GrMe10a} for more details on
these constructions.  The following theorem is proved in
\cite{GrMe10a}.
\begin{theorem}\label{rajan}
  Let $(D \to B; A \to M)$ be a VB-algebroid and choose a linear
  splitting $\Sigma\colon A\times_MB\to D$.  The triple
  $(\nabla^{B},\nabla^{C},R)$ defined as above is a
  $2$-representation of $A$ on the complex $\partial_B\colon C\to B$,
  where $\partial_B$ is the core-anchor.

  Conversely, let $(D;A,B;M)$ be a double vector bundle such that $A$
  has a Lie algebroid structure and choose a linear splitting
  $\Sigma\colon A\times_MB\to D$. Then if
  $(\nabla^{B},\nabla^{C},R)$ is a 2-representation of $A$ on a
  complex $\partial_B\colon C\to B$, then the three equations above
  and the core-anchor $\partial_B$ define a VB-algebroid structure on
  $(D\to B; A\to M)$.

\end{theorem}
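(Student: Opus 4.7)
The plan is to reduce both directions of the equivalence to computations on generators of the $C^\infty(B)$-module $\Gamma_B(D)$, which is generated by the horizontal lifts $\{\sigma_A(a)\}_{a\in\Gamma(A)}$ and the core sections $\{c^\dagger\}_{c\in\Gamma(C)}$, with a core-linear section realised as $\widetilde{\beta\otimes c}=\ell_\beta\cdot c^\dagger$. A linear Lie algebroid bracket on $D\to B$ over a prescribed anchor is fully determined by its values on these generators together with the Leibniz rule, so the three displayed equations in the statement serve simultaneously as a structure theorem (forward direction) and as a construction recipe (converse).

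For the forward direction, the triple $(\nabla^B,\nabla^C,R)$ is extracted from the VB-algebroid data as indicated in \S\ref{subsect:ruths}. That $\nabla^B$ and $\nabla^C$ are genuine $A$-connections follows from $\sigma_A(fa)=q_A^*f\cdot\sigma_A(a)$ combined with linearity of the anchor and of the bracket, using the correspondence \eqref{ableitungen} between linear vector fields on $B$ and derivations of $\Gamma(B)$. The intertwining $\partial_B\circ\nabla^C_a=\nabla^B_a\circ\partial_B$ is obtained by applying $\Theta_B$ to $[\sigma_A(a),c^\dagger]=(\nabla^C_a c)^\dagger$: via the core-anchor formula and \eqref{Lie_bracket_VF}, both sides reduce to $(\nabla^B_a\partial_B c)^\uparrow$.

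The curvature and cocycle conditions then emerge from the Jacobi identity. Applying $\Theta_B$ to $[\sigma_A(a_1),\sigma_A(a_2)]=\sigma_A[a_1,a_2]-\widetilde{R(a_1,a_2)}$, and noting that $\Theta_B\widetilde{\phi}$ is the vertical linear vector field on $B$ associated with $\partial_B\circ\phi\in\Gamma(\Hom(B,B))$, produces $R_{\nabla^B}=\partial_B\circ R$. The Jacobi identity on $(\sigma_A(a_1),\sigma_A(a_2),c^\dagger)$, after full expansion, collapses to the single core section $(R_{\nabla^C}(a_1,a_2)c-R(a_1,a_2)(\partial_B c))^\dagger$, forcing $R_{\nabla^C}=R\circ\partial_B$. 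Finally, the Jacobi identity on three horizontal lifts, decomposed with respect to $\Sigma$ into its projectable and core-linear parts, returns the Jacobi identity on $\Gamma(A)$ and the cocycle condition $\dr_{\nabla^{\Hom}}R=0$, respectively.

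For the converse, one defines the bracket on generators via the three displayed equations and the anchor $\Theta_B\colon D\to TB$ by $\Theta_B\sigma_A(a)=\widehat{\nabla^B_a}$ and $\Theta_B c^\dagger=(\partial_B c)^\uparrow$, extending both by $C^\infty(B)$-linearity and the Leibniz rule $[\xi,f\eta]=\Theta_B(\xi)(f)\eta+f[\xi,\eta]$. The decomposition $\mathbb I$ associated to $\Sigma$ ensures that $\Theta_B$ is well defined as a double vector bundle morphism covering $\rho_A$, and the VB-linearity of the bracket is immediate from the formulas. The main obstacle is the Jacobi identity, which must be checked on the four homogeneous triples of generators: the three-core case is trivial; the two-core one-linear case reduces to $\partial_B\circ\nabla^C=\nabla^B\circ\partial_B$; the one-core two-linear case is equivalent to $R_{\nabla^C}=R\circ\partial_B$; and the three-linear case reduces to the Jacobi identity on $A$ (provided by hypothesis) together with the cocycle $\dr_{\nabla^{\Hom}}R=0$. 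Compatibility of $\Theta_B$ with the bracket and extension of the Jacobi identity from generators to arbitrary sections of $D\to B$ via Leibniz are then automatic.
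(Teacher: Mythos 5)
Your overall strategy -- computing on the generators $\sigma_A(a)$ and $c^\dagger$ of $\Gamma_B(D)$ and matching the structure equations against the five axioms of a $2$-representation -- is the standard one; the paper itself gives no proof but cites \cite{GrMe10a}, whose argument is of exactly this splitting-based type, and your forward direction is essentially correct (modulo the typo $q_A^*f$ for $q_B^*f$, and the remark that in the intertwining computation one side reduces to $(\partial_B\nabla^C_ac)^\uparrow$, not both to $(\nabla^B_a\partial_Bc)^\uparrow$).

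In the converse, however, your accounting of where the axioms are used has a genuine gap. The Jacobi identity on the bare triple $(\sigma_A(a),c_1^\dagger,c_2^\dagger)$ is trivially $0=0$, since all three terms are brackets of core sections; it does \emph{not} encode $\partial_B\circ\nabla^C=\nabla^B\circ\partial_B$. As written, your list of checks therefore never consumes the two axioms $\partial_B\circ\nabla^C=\nabla^B\circ\partial_B$ and $R_{\nabla^B}=\partial_B\circ R$, which should be a warning sign. They are needed precisely in the step you declare ``automatic'': the compatibility of $\Theta_B$ with the bracket on generators, equivalently the Jacobi identity on triples involving $C^\infty(B)$-multiples of generators (e.g.\ $(\sigma_A(a),c_1^\dagger,\ell_\beta c_2^\dagger)$, whose $c_2^\dagger$-component is exactly $\langle\beta,\partial_B\nabla^C_ac_1-\nabla^B_a\partial_Bc_1\rangle$, and the pair of linear sections, where $\Theta_B[\sigma_A(a_1),\sigma_A(a_2)]=[\Theta_B\sigma_A(a_1),\Theta_B\sigma_A(a_2)]$ is equivalent to $R_{\nabla^B}=\partial_B\circ R$ via \eqref{ableitungen}). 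The general fact that anchor--bracket compatibility follows from Leibniz and Jacobi cannot be invoked here, because the Jacobi identity for arbitrary sections is only obtained from the generator computations \emph{after} the anchor compatibility on generators is known; so you must verify it directly, and that is where these two conditions enter. Relatedly, the well-definedness of the Leibniz extension of the bracket deserves an explicit argument (work in a local frame $\{\sigma_A(a_i),c_j^\dagger\}$ provided by the decomposition and check independence of the frame) rather than being subsumed under ``the decomposition ensures''. With these two points repaired the proof goes through.
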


 In the situation of the previous theorem, we have
\begin{equation*}
\left[\sigma_A(a),\widetilde\phi\right]=\widetilde{\nabla_a^{\rm Hom}\phi}
\qquad \text{ and }\qquad 
\left[c^\dagger,\widetilde\phi\right]=(\phi(\partial_Bc))^\dagger
\end{equation*}
for all $a\in\Gamma(A)$, $\phi\in\Gamma(\operatorname{Hom}(B,C))$ and
$c\in\Gamma(C)$, see for instance \cite{GrJoMaMe14}.

\begin{remark}\label{change}
  If $\Sigma_1,\Sigma_2\colon A\times_M B\to D$ are two linear
  splittings of a VB-algebroid $(D\to B, A\to M)$ and
  $\phi_{12}\in\Gamma(A^*\otimes B^*\otimes C)$ is the change of
  splitting, then the two corresponding 2-representations are related
  by the following identities \cite{GrMe10a}.
\begin{equation}\label{nablas}
\nabla^{B,2}_a=\nabla^{B,1}_a+\partial_B\circ \phi_{12}(a), 
\quad \nabla^{C,2}_a=\nabla^{C,1}_a+\phi_{12}(a)\circ\partial_B\end{equation}
and
\begin{equation}\label{curv_new}
\begin{split}
R^2(a_1,a_2)=&R^1(a_1,a_2)+(\dr_{\nabla^{\operatorname{Hom}}}\phi_{12})(a_1,a_2)\\
&\qquad+\phi_{12}(a_1)\partial_B\phi_{12}(a_2)-\phi_{12}(a_2)\partial_B\phi_{12}(a_1)
\end{split}
\end{equation}
for all $a,a_1,a_2\in\Gamma(A)$. 

Given a 2-representation $\mathcal D$ of $A$ on $E_0[0]\oplus E_1[1]$ and
a tensor $\phi\in\Gamma(A^*\otimes E_1^*\otimes E_0)$, we say that
the new 2-representation defined by \eqref{nablas} and
\eqref{curv_new} is the \textbf{$\phi$-twist of $\mathcal D$}.
\end{remark}

\begin{example}\label{double_ruth}
  Choose a linear connection
  $\nabla\colon\mx(M)\times\Gamma(E)\to\Gamma(E)$ and consider the
  corresponding linear splitting $\Sigma^\nabla$ of $TE$ as in \S
  \ref{tangent_double}.  The description of the Lie bracket of vector
  fields in \eqref{Lie_bracket_VF} shows that the 2-representation
  induced by $\Sigma^\nabla$ is the 2-representation of $TM$ on
  $\Id_E\colon E\to E$ given by $(\nabla,\nabla,R_\nabla)$.
\end{example}

\begin{example}[The tangent of a Lie algebroid]\label{tangent_double_al}
Let $(A\to M,\rho,[\cdot\,,\cdot])$ be a Lie algebroid. Then the
tangent $TA\to TM$ has a Lie algebroid structure with bracket defined
by $[Ta_1, Ta_2]=T[a_1,a_2]$, $[Ta_1, a_2^\dagger]=[a_1,a_2]^\dagger$
and $[a_1^\dagger, a_2^\dagger]=0$ for all $a_1,a_2\in\Gamma(A)$. The
anchor of $Ta$ is $\widehat{[\rho(a),\cdot]}\in\mx(TM)$ and the anchor
of $a^\dagger$ is $\rho(a)^\uparrow$ for all $a\in\Gamma(A)$.  This
defines a VB-algebroid structure $(TA\to TM; A\to M)$ on
$(TA;TM,A;M)$.

Given a $TM$-connection on $A$, and so a linear splitting
$\Sigma^\nabla$ of $TA$ as in \S \ref{tangent_double}, the
2-representation of $A$ on $\rho\colon A\to TM$ encoding this
VB-algebroid is the \textbf{adjoint $2$-representation}
$(\nabla^{\rm bas},\nabla^{\rm bas}, R_\nabla^{\rm bas})$ \cite{GrMe10a}, where the
connections are defined by
\begin{equation*}
\begin{split}
\nabla^{\rm bas}&\colon \Gamma(A)\times\mx(M)\to\mx(M), \qquad \nabla^{\rm
    bas}_aX=[\rho(a), X]+\rho(\nabla_Xa)
\end{split}
\end{equation*} 
and 
\begin{equation*}
\begin{split}
  \nabla^{\rm bas}&\colon
  \Gamma(A)\times\Gamma(A)\to\Gamma(A), \qquad \nabla^{\rm bas}_{a_1}a_2=[a_1,a_2]+\nabla_{\rho(a_2)}a_1,
\end{split}
\end{equation*}  and $R_\nabla^{\rm
    bas}\in \Omega^2(A,\operatorname{Hom}(TM,A))$ is given by
  \[R_\nabla^{\rm
    bas}(a_1,a_2)X=-\nabla_X[a_1,a_2]+[\nabla_Xa_1,a_2]+[a_1,\nabla_Xa_2]+\nabla_{\nabla^{\rm
      bas}_{a_2}X}a_1-\nabla_{\nabla^{\rm bas}_{a_1}X}a_2
\]
for all $X\in\mx(M)$, $a, a_1,a_2\in\Gamma(A)$.
\end{example}

\subsubsection{Dualisation and $2$-representations}\label{dual_and_ruths}
Let $(D\to B, A\to M)$ be a VB-algebroid. Then $(D\duer A\to C^*, A\to
M)$ has in induced VB-algebroid structure \cite{Mackenzie11}. While
this can be defined in an abstract and natural manner (i.e.~without
the use of splittings), we characterise for simplicity the linear Lie
algebroid structure on $D\duer A\to C^*$ using Theorem \ref{rajan}.

Let $\Sigma\colon A\times_MB\to D$ be a linear splitting of $D$ and
denote by $(\nabla^B,\nabla^C,R)$ the induced 2-representation of the
Lie algebroid $A$ on $\partial_B\colon C\to B$. We have seen above
that the linear splitting $\Sigma$ induces a linear splitting
$\Sigma^\star\colon A\times_M C^*\to D\duer A$.  The induced
VB-algebroid structure on $(D\duer A\to C^*, A\to M)$ is given in this
splitting by the 2-representation $({\nabla^C}^*,{\nabla^B}^*,-R^*) $
of $A$ on the complex $\partial_B^*\colon B^*\to C^*$. This is proved
in the appendix of \cite{DrJoOr15}. (Note that the construction of the
``dual'' linear splitting of $D\duer A$, given a linear splitting of
$D$, is done in \cite{DrJoOr15} by dualising the corresponding
decomposition and taking its inverse. The resulting linear splitting
of $D\duer A$ is the same as ours.)

\section{{[2]}-manifolds and metric double vector bundles}\label{sec:metDVB}
In this section we recall the definitions of $\N$-manifolds of degree
$2$ and of their morphisms. Then we introduce linear metrics on double
vector bundles, and the dual objects, involutive double vector
bundles. We define morphisms of involutive double vector bundles and
we prove our main result: an equivalence between the category of
$\N$-manifolds of degree $2$ and the obtained category of involutive
double vector bundles.

We illustrate the theory with two standard classes of metric double vector bundles: the tangent
double $TE\to TM$ of a metric vector bundle $E$, and the Pontryagin bundle
$TE\oplus T^*E\to E$ of a vector bundle $E$. We describe the
dual involutive double vector bundles.
\subsection{$\N$-manifolds}
Here we give the definitions of $\N$-manifolds. We are particularly
interested in $\N$-manifolds of degree $2$. We refer to \cite{Mehta06,BoPo13} for
more details.

\begin{definition}\label{n_man} An \textbf{ $\N$-manifold}  $\mathcal M$ of degree $n$ and dimension $(m;
  r_1,\ldots, r_n)$ is a sheaf of
$\N$-graded, graded commutative, associative, unital
$C^\infty(M)$-algebras over a smooth $m$-dimensional manifold $M$, that is locally freely generated by
$r_1+\ldots+ r_n$
  elements
  $\xi_1^{1},\ldots,\xi_1^{r_1}$, $\xi_2^1,\ldots,\xi_2^{r_2},\ldots$,
  $\xi_n^1,\ldots,\xi_n^{r_n}$ with $\xi_i^j$ of degree $i$ for
  $i\in\{1,\ldots,n\}$ and $j\in\{1,\ldots,r_i\}$.

  A morphism of $\N$-manifolds $\mu\colon\mathcal N\to
  \mathcal M$ over a smooth map $\mu_0\colon N\to M$ of the underlying
  smooth manifolds is a morphism $\mu^\star\colon C^\infty(\mathcal
  M)\to C^\infty(\mathcal N)$ of sheaves of graded algebras over
  $\mu_0^*\colon C^\infty(M)\to C^\infty(N)$.
\end{definition}

Note that the degree $0$ elements of $C^\infty(\mathcal M)$ are
precisely the smooth functions on $M$.  We call
\textbf{$[n]$-manifold} an $\N$-manifold of degree $n\in \N$. We write
$|\xi|$ for the degree of a homogeneous element $\xi\in
C^\infty(\mathcal M)$, i.e.~an element which can be written as a sum
of functions of the same degree and we write $C^\infty(\mathcal M)^i$
for the elements of degree $i$ in $C^\infty(\mathcal M)$.  Note that a
\textbf{$[1]$-manifold} over a smooth manifold $M$ is equivalent to a
locally free and finitely generated sheaf of $C^\infty(M)$-modules.

\bigskip Our goal in this section is to prove that $[2]$-manifolds are
equivalent to double vector bundles endowed with a linear metric
(Theorem \ref{main_crucial}).  We begin with a few observations on the
equivalence of smooth vector bundles with locally free and finitely
generated sheaves of $C^\infty$-modules, i.e.~$[1]$-manifolds. Theorem \ref{main_crucial}
will generalise this result to the degree $2$ case.

\subsubsection{Vector bundles and $[1]$-manifolds}\label{classical_eq}
Here we recall the equivalence of categories between degree
$[1]$-manifolds (or locally free and finitely generated sheaves of
$C^\infty$-modules) and smooth vector bundles (see for instance
\cite[Theorem II.1.13]{Wells08}). This subsection can be seen as
introductory to the methods in \S \ref{eq_2_manifolds}.

Let $\operatorname{VB}$ be the category of smooth vector bundles. Let $E\to M$ and $F\to N$ be vector
bundles. Recall from Lemma
\ref{bundlemap_eq_to_morphism} that a morphism $\phi\colon F\to E$ of vector
bundles over
$\phi_0\colon N\to M$ is equivalent to a map
$\phi^\star\colon \Gamma(E^*)\to\Gamma(F^*)$ defined as in
\eqref{dual_of_VB_map} and satisfying
\[\phi^\star(f\cdot \varepsilon)=\phi_0^*f\cdot\phi^\star(\varepsilon)
\]
for all $f\in C^\infty(M)$ and $\varepsilon\in \Gamma(E^*)$.

Let $\operatorname{[1]-Man}$ be the category of $[1]$-manifolds. 
We now establish an equivalence between $\operatorname{VB}$ and
$\operatorname{[1]-Man}$.  The functor $\Gamma(\cdot)\colon
\operatorname{VB}\to\operatorname{[1]-Man}$ sends a vector bundle
$E\to M$ to the set of sections $\Gamma(E^*)$, a locally free and
finitely generated sheaf of $C^\infty(M)$-modules. We call this
$[1]$-manifold $E[-1]$. The functor $\Gamma(\cdot)$ sends a morphism
$\Phi=(\phi,\phi_0)\colon F\to E$ as above to the morphism
$\phi^\star\colon \Gamma(E^*)\to \Gamma(F^*)$ over $\phi_0^*\colon
C^\infty(M)\to C^\infty(N)$, defining a morphism $\Gamma(\Phi)\colon
F[-1]\to E[-1]$ of $[1]$-manifolds.

\medskip Next choose a $[1]$-manifold $\mathcal M$ over a smooth
manifold $M$.  There exists a maximal open covering $\{U_\alpha\}$ of $M$
such that $C^\infty_{U_\alpha}(\mathcal M)$ is finitely generated by
generators $\xi^\alpha_1,\ldots,\xi^\alpha_m$. For two indices
$\alpha,\beta$ such that $U_\alpha\cap U_\beta\neq\emptyset$, we can
write each generator in an unique manner as
$\xi^\beta_j=\sum_{i=1}^m\psi_{\alpha\beta}^{ij}\xi^\alpha_i$ with
smooth functions $\psi_{\alpha\beta}^{ij}\in C^\infty(U_\alpha\cap
U_\beta)$.  We define $A_{\alpha\beta}\in C^\infty(U_\alpha\cap
U_\beta, \operatorname{Gl}(\R^m))$ by
$A_{\alpha\beta}=(\psi_{\alpha\beta}^{ij})_{i,j=1,\ldots,m}$.  We
have then immediately \begin{equation} \label{cocycle}
  A_{\gamma\alpha}\cdot A_{\alpha\beta}=A_{\gamma\beta},
\end{equation}
 where $\cdot$ is the pointwise
multiplication of matrices.  Next we consider the disjoint union
$\tilde E=\bigsqcup_{\alpha}U_\alpha\times \R^m$
and identify for $x\in U_\alpha\cap U_\beta\neq \emptyset$
\[(x,v)\in U_\beta\times \R^m \quad \text{ with }\quad (x, A_{\alpha\beta}(x)v)\in U_\alpha\times \R^m .
\]
By \eqref{cocycle}, this defines an equivalence relation on $\tilde E$
and the quotient $E$ has a smooth vector bundle
structure with vector bundle charts given by the inclusions
$U_\alpha\times \R^m\hookrightarrow E$, and changes of charts the
cocycles $A_{\alpha\beta}$.  We set $E(\mathcal M):=E^*$. Note that the maps $e_i^\alpha\colon
U_\alpha\to U_\alpha\times\R^m$, $x\mapsto (x,e_i)$ define smooth
local sections of $E$ and
$e_i^\beta=\sum_{j=1}^n\psi_{\alpha\beta}^{ji}e_j^\alpha$ for
$\alpha,\beta$ such that $U_\alpha\cap U_\beta\neq \emptyset$. Hence,
we can identify $\xi_i^\alpha$ with the section $e_i^\alpha$ and we
see that a morphism $\mu\colon\mathcal N\to \mathcal M$
over $\mu_0\colon N\to M$ defines a morphism $E(\mu)^\star\colon
\Gamma(E(\mathcal M)^*)\to\Gamma(E(\mathcal N)^*)$ of modules over
$\mu_0^*\colon C^\infty(M)\to C^\infty(N)$, and so by Lemma
\ref{bundlemap_eq_to_morphism} a vector bundle morphism $E(\mathcal
N)\to E(\mathcal M)$ over $\mu_0\colon N\to M$.  Hence we have
constructed a functor $E(\cdot)\colon
\operatorname{[1]-Man}\to\operatorname{VB}$.

\medskip

Next we show that the two functors are part of an equivalence of
categories.  The functor $E(\cdot)\circ \Gamma(\cdot)\colon
\operatorname{VB}\to\operatorname{VB}$ sends a vector bundle to the
abstract vector bundle defined by its trivialisations and cocycles.
There is an obvious natural isomorphism between this functor and the
identity functor $\operatorname{VB}\to\operatorname{VB}$.
The functor $\Gamma(\cdot)\circ E(\cdot)\colon
\operatorname{[1]-Man}\to\operatorname{[1]-Man}$ sends a
$[1]$-manifold $\mathcal M$ over $M$ with local generators
$\xi^\alpha_i\in C^\infty_{U_\alpha}(\mathcal M)^1$ and
cocycles $A_{\alpha\beta}$ to the sheaf of sections of $E(\mathcal
M)^*$, with local basis sections $\varepsilon_i^\alpha\in \Gamma_{U_\alpha}(E(\mathcal
M)^*)$ and cocycles
$A_{\alpha\beta}$. There is an obvious natural isomorphism between this functor and the
identity functor $\operatorname{[1]-Man}\to\operatorname{[1]-Man}$.





\subsubsection{Split $\N$-manifolds}
Next we quickly discuss split $\N$-manifolds and we recall how each
$\N$-manifold is noncanonically isomorphic to a split $\N$-manifold of the
same degree and of the same dimension.

  Let $E$ be a smooth vector bundle of rank $r$ over a smooth manifold
  $M$ of dimension $p$ and assign the degree $n$ to the fiber coordinates
  of $E$. This defines $E[-n]$, an $[n]$-manifold of dimension
  $(p;r_1=0,\ldots,r_{n-1}=0,r_n=r)$ with
  $C^\infty(E[-n])^n=\Gamma(E^*)$.

 Now let $E_{1},E_{2},\ldots,E_{n}$ be smooth vector bundles
  of finite ranks $r_1,\ldots,r_n$ over $M$ and assign the degree $i$
  to the fiber coordinates of $E_{i}$, for each $i=1,\ldots,n$.  The
  direct sum $E=E_{1}\oplus \ldots\oplus E_{n}$ is a graded vector
  bundle with grading concentrated in degrees $1,\ldots,n$.  The
  $[n]$-manifold $E_{1}[-1]\oplus\ldots\oplus E_{n}[-n]$
  has local basis sections of ${E_{i}^*}$ as local generators of
  degree $i$, for $i=1,\ldots,n$, and so dimension $(p;r_1,\ldots,r_n)$. 
  The $[n]$-manifold $\mathcal M=E_{1}[-1]\oplus\ldots\oplus
  E_{n}[-n]$ is
  called a \textbf{split $[n]$-manifold}.

In this paper, we are exclusively interested in the cases $n=2$ and
$n=1$.  Choose two vector bundles $E_{1}$ and $E_{2}$ of ranks $r_1$
and $r_2$ over a smooth manifold $M$. Consider $\mathcal
M=E_{1}[-1]\oplus E_{2}[-2]$.  We find $C^\infty(\mathcal
M)^0=C^\infty(M)$, $C^\infty(\mathcal M)^1=\Gamma(E_{1}^*)$ and
$C^\infty(\mathcal M)^2=\Gamma(E_{2}^*\oplus \wedge^2E_{1}^*)$.

A morphism $\mu\colon F_{1}[-1]\oplus F_{2}[-2]\to
E_{1}[-1]\oplus E_{2}[-2]$ of split $[2]$-manifolds over the bases
$N$ and $M$, respectively, consists of a smooth map $\mu_0\colon N\to
M$, three vector bundle morphisms $\mu_1\colon F_{1}\to E_{1}$,
$\mu_2\colon F_{2}\to E_{2}$ and $\mu_{12}\colon \wedge^2F_{1}\to
E_{2}$ over $\mu_0$. The map $\mu^\star$ sends a degree $1$ function
$\xi\in\Gamma(E_{1}^*)$ to
 $\mu_1^\star\xi\in\Gamma(F_{1}^*)$
 and a degree $2$-function $\xi\in\Gamma(E_{2}^*)$ to
  \begin{equation*}
\mu_2^\star\xi+\mu_{12}^\star\xi\in\Gamma(F_{2}^*\oplus
  \wedge^2F_{1}^*).
\end{equation*}
Any $\N$-manifold is non-canonically diffeomorphic to a split
$\N$-manifold. Further, the categories of split $\N$-manifolds and of
$\N$-manifolds are equivalent.  This is proved for instance in
\cite{BoPo13}, following the proof of the $\mathbb Z/2\mathbb
Z$-graded version of this
theorem, which is known as \emph{Batchelor's theorem} \cite{Batchelor80}.

\begin{theorem}[\cite{Batchelor80,BoPo13}]\label{split_N} Any $[n]$-manifold
  is non-canonically diffeomorphic to a split $[n]$-manifold.
\end{theorem}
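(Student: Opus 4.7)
The plan is to proceed by induction on the degree $n$, building up the vector bundles $E_1,\ldots,E_n$ out of the graded pieces of the structure sheaf and using projectivity of modules of sections to split off each degree inductively.

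First, I would analyse the degree-$1$ piece. The sheaf $C^\infty(\mathcal M)^1$ is a locally free and finitely generated sheaf of $C^\infty(M)$-modules, so by the equivalence of categories recalled in \S\ref{classical_eq} it is the sheaf of sections of $E_1^\ast$ for a unique vector bundle $E_1\to M$ of rank $r_1$. This identifies the degree-$1$ generators globally and gives a canonical map $\Gamma(E_1^\ast)\hookrightarrow C^\infty(\mathcal M)^1$, which by the universal property of the symmetric/exterior algebra extends to a morphism of graded algebras $\Gamma(\wedge^\bullet E_1^\ast)\to C^\infty(\mathcal M)$ that is an isomorphism in degrees $0$ and $1$.

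Next, suppose inductively that $E_1,\ldots,E_{i-1}$ have been constructed together with a morphism $\nu_{<i}^\star\colon C^\infty(E_1[-1]\oplus\cdots\oplus E_{i-1}[-(i-1)])\to C^\infty(\mathcal M)$ of sheaves of graded algebras which is an isomorphism in each degree $<i$. Inside $C^\infty(\mathcal M)^i$ consider the subsheaf $\mathcal D^i$ of decomposable elements, generated as a $C^\infty(M)$-module by products $\xi_1\cdots\xi_k$ with $|\xi_j|<i$ and $\sum|\xi_j|=i$; using the local generators $\xi_\ell^{j,\alpha}$ one checks that $\mathcal D^i$ is itself locally freely finitely generated, and that the quotient $C^\infty(\mathcal M)^i/\mathcal D^i$ is locally freely finitely generated of constant rank $r_i$, hence is $\Gamma(E_i^\ast)$ for a unique vector bundle $E_i\to M$ of rank $r_i$. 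Because $\Gamma(E_i^\ast)$ is a projective $C^\infty(M)$-module (equivalently, using a partition of unity on $M$), the short exact sequence
\begin{equation*}
0\longrightarrow \mathcal D^i\longrightarrow C^\infty(\mathcal M)^i\longrightarrow \Gamma(E_i^\ast)\longrightarrow 0
\end{equation*}
of $C^\infty(M)$-modules splits, and a choice of splitting $s_i\colon\Gamma(E_i^\ast)\hookrightarrow C^\infty(\mathcal M)^i$ provides a lift of the new degree-$i$ generators. Using $\nu_{<i}^\star$ on $\mathcal D^i$ and $s_i$ on the top symbol, the morphism $\nu_{<i}^\star$ extends by multiplicativity to $\nu_{\le i}^\star$ and is an isomorphism in degree $i$ by construction.

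After $n$ such steps one obtains a morphism $\nu^\star\colon C^\infty(E_1[-1]\oplus\cdots\oplus E_n[-n])\to C^\infty(\mathcal M)$ which is an isomorphism in every degree, hence a diffeomorphism of $[n]$-manifolds, proving the theorem; the non-canonical choices are precisely the splittings $s_2,\ldots,s_n$. The main obstacle is the inductive step: showing that $\mathcal D^i$ is a well-behaved subsheaf and that $C^\infty(\mathcal M)^i/\mathcal D^i$ is locally free of the expected rank $r_i$. This is a local computation in a chart with free generators $\xi_\ell^{j,\alpha}$, where graded commutativity lets one write each degree-$i$ element uniquely as a $C^\infty(U_\alpha)$-combination of a finite family of monomials, separated into those involving only the degree-$i$ generators $\xi_i^{j,\alpha}$ and those involving a strictly lower-degree factor; the first family then descends to a free local basis of $C^\infty(\mathcal M)^i/\mathcal D^i$, and one must verify that this description is compatible on overlaps so that the transition matrices of the ``top symbols'' satisfy the cocycle condition \eqref{cocycle} and thereby genuinely define the vector bundle $E_i$.
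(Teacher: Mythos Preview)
Your proposal is correct and follows essentially the same route as the paper's proof, which is given explicitly only for $n=2$: identify $C^\infty(\mathcal M)^1$ with $\Gamma(E_1^\ast)$, define the decomposable part in the next degree, take the quotient to produce $E_2^\ast$, and split the resulting short exact sequence of $C^\infty(M)$-modules using projectivity. Your inductive formulation is the natural extension of this argument to general $n$, and your identification of the key technical point---local freeness of $\mathcal D^i$ and of the quotient $C^\infty(\mathcal M)^i/\mathcal D^i$---matches what the paper handles (implicitly) via the local free generators.
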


We give here the proof by \cite{BoPo13} in the case $n=2$. We are
especially interested in the morphism of split $[2]$-manifolds induced
by a change of splitting of a $[2]$-manifold and we emphasize this
in the proof.
\begin{proof}[Sketch of Proof, \cite{BoPo13}]
  Consider a $[2]$-manifold $\mathcal M$ over a smooth base manifold
  $M$. Since $C^\infty(\mathcal M)^0=C^\infty(M)$ and
  $C^\infty(\mathcal M)^0\cdot C^\infty(\mathcal M)^1\subset
  C^\infty(\mathcal M)^1$, the sheaf $C^\infty(\mathcal M)^1$ is a
  locally free and finitely generated sheaf of $C^\infty(M)$-modules
  and there exists a vector bundle $E_1\to M$ such that
  $C^\infty(\mathcal M)^1\simeq \Gamma(E_1^*)$. Now let ${\mathcal A}_1$
  be the subalgebra of $C^\infty(\mathcal M)$ generated by
  $C^\infty(\mathcal M)^0\oplus C^\infty(\mathcal M)^1$. We find
  easily that $\mathcal A_1\simeq \Gamma(\wedge^\bullet E_{1}^*)$ and
  ${\mathcal A}_{1}\cap C^\infty(\mathcal M)^2=\wedge^2C^\infty(\mathcal
  M)^1$ is a proper $C^\infty(M)$-submodule of $C^\infty(\mathcal
  M)^2$. Since the quotient $C^\infty(\mathcal M)^2/\wedge^2C^\infty(\mathcal
  M)^1$ is a locally free and finitely generated sheaf of
  $C^\infty(M)$-modules, we have  $C^\infty(\mathcal
  M)^2/\wedge^2C^\infty(\mathcal M)^1\simeq \Gamma(E_{2}^*)$, for a vector
  bundle $E_{2}$ over $M$. The short exact sequence \begin{equation*}
    0\rightarrow \wedge^2C^\infty(\mathcal M)^1\hookrightarrow
    C^\infty(\mathcal M)^2\rightarrow \Gamma(E_{2}^*)\rightarrow
    0\end{equation*} of $C^\infty(M)^0$-modules is non canonically
  split. Let us choose a splitting and identify $\Gamma(E_{2}^*)$
  with a submodule of $C^\infty(\mathcal M)^2$:
$$C^\infty(\mathcal M)^2\simeq \wedge^2C^\infty(\mathcal M)^1\oplus\Gamma(E_{2}^*)= \Gamma(\wedge^2E_{1}^*\oplus
E_{2}^*)\;.$$ Hence, the considered $[2]$-manifold is isomorphic, modulo
the chosen splitting, to the split $[2]$-manifold $E_{1}[-1]\oplus
E_{2}[-2]$.

Note finally that a change of splitting is equivalent to a section
$\phi$ of $\operatorname{Hom}(\wedge^2E_{1}, E_2)$ and
induces an isomorphism of split $[2]$-manifolds over the identity on
$M$: $\mu^\star(\xi)=\xi+\phi^\star\xi\in\Gamma(E_{2}^*\oplus
\wedge^2E_{1}^*)$ for all $\xi\in\Gamma(E_{2}^*)$ and
$\mu^\star(\xi)=\xi$ for all $\xi\in\Gamma(E_{1}^*)$.
\end{proof}

Note that $[1]$-manifolds are automatically split. As we have seen in
\S \ref{classical_eq}, $[1]$-manifolds are just vector bundles with a
degree shifting in the fibers, i.e.~$\mathcal M=E[-1]$ for some vector
bundle $E\to M$ and $C^\infty(\mathcal M)=\Gamma(\wedge^\bullet
E^*)$, the exterior algebra of $E$.

\subsubsection{Vector fields on $[n]$-manifolds.}
Let us quickly introduce the notion of \emph{vector field} on an
$\N$-manifold. Let $\mathcal M$ be an $[n]$-manifold.  A
\textbf{vector field of degree $j$} on $\mathcal M$ is a degree $j$
derivation $\phi$ of $C^\infty(\mathcal M)$:
$|\phi(\xi)|=j+|\xi|$
for a homogeneous element $\xi\in C^\infty(\mathcal M)$. We write
$\der(C^\infty(\mathcal M))$ for the sheaf of graded
derivations of $C^\infty(\mathcal M)$.

The vector fields on $\mathcal M$ and their Lie bracket defined 
by $[\phi,\psi]=\phi\psi-(-1)^{|\phi||\psi|}\psi\phi$
 satisfy the following 
conditions:
\begin{enumerate}
\item $\phi(\xi\eta)=\phi(\xi)\eta+(-1)^{|\phi||\xi|}\xi\phi(\eta)$,
\item $[\phi,\psi]=(-1)^{1+|\phi||\psi|}[\psi,\phi]$,
\item $[\phi,\xi\psi]=\phi(\xi)\psi+(-1)^{|\phi||\xi|}\xi[\phi,\psi]$,
\item $(-1)^{|\phi||\gamma|}[\phi,[\psi,\gamma]]+(-1)^{|\psi||\phi|}[\psi,[\gamma,\phi]]
+(-1)^{|\gamma||\psi|}[\gamma,[\phi,\psi]]=0$
\end{enumerate}
for $\phi,\psi,\gamma$ homogeneous elements of
$\der(C^\infty(\mathcal M))$ and $\xi,\eta$ homogeneous
elements of $C^\infty(\mathcal M)$.  For instance, given an open set
$U$ of $M$ where $C^\infty(\mathcal M)$ is freely generated by $\xi^i_j$ ,
the derivation
$\partial_{\xi^i_j}$ of $C^\infty_U(\mathcal M)$ sends $\xi^i_j$ to
$1$ and the other local generators to $0$. It is hence a derivation of
degree $-j$. $\der(C^\infty_U(\mathcal M))$ is
freeely generated as a $C^\infty_U(\mathcal M)$-module by $\partial_{x_k}$,
$k=1,\ldots,p$ and $\partial_{\xi^i_j}$, $j=1,\ldots,n$,
$i=1,\ldots,r_j$.

Finally note that if an $[n]$-manifold $\mathcal M$ splits as
$E_{1}[-1]\oplus E_{2}[-2]\oplus\ldots\oplus E_{n}[-n]$, then each
section $e$ of $E_j$ defines a derivation
$\hat{e}$ of degree $-j$ on $\mathcal M$:
$\hat{e}(f)=0$, $\hat{e}(\varepsilon_j^i)=\langle
e,\varepsilon_j^j\rangle$, and $\hat{e}(\varepsilon_k^i)=0$ for
$k\neq j$.  We find $\hat{e_j^i}=\partial_{\varepsilon_j^i}$ if
$\{e_j^1, \ldots,e_j^{r_j}\}$ is a local basis of $E_j$ and
$\{\varepsilon_j^1,\ldots,\varepsilon_j^{r_j}\}$ is the dual basis of
$E_j^*$.
Further, a vector field $\phi$ of degree $0$ on $\mathcal M$ can be
written as a sum
$X+\delta_1+\delta_2+\ldots+\delta_n$,
with $X\in \mx(M)$ and each $\delta_i$ a derivation of $E_i$ with symbol
$X\in\mx(M)$.  The derivation $X+\delta_1+\ldots+\delta_n$ sends
$\varepsilon_i\in\Gamma(E_i^*)$ to $\delta_i^*(\varepsilon_i)$.
In particular, if for each $j$ the map $\delta^j\colon \mx(M)\to\der(E_j)$ is a morphism of
$C^\infty(M)$-modules 
that sends a vector field $X$ to a derivation $\delta^j(X)$ over $X$, 
then 
\begin{equation}\label{der_deg_0}
\{X+\delta^1(X)+\ldots+\delta^n(X)\mid X\in\mx(M)\}\cup\{\hat{\varepsilon}\mid
\varepsilon\in\Gamma(E_j) \text{ for some } j\} 
\end{equation}
span $\der(C^\infty(\mathcal M))$ as a $C^\infty(\mathcal M)$-module.

\subsection{Metric double vector bundles} 
Next we introduce linear metrics on double vector bundles.
\begin{definition}
A \textbf{metric double vector bundle} is a double vector bundle
$(\mathbb E, Q; B, M)$ equipped with a \textbf{symmetric
  non-degenerate pairing $\mathbb E\times_B\mathbb E\to \R$} that is
also \textbf{linear over $Q$},
i.e.~such that the map $\Beta\colon \mathbb E\to\mathbb E\duer B$
{\small\begin{equation*}
  \begin{xy}
    \xymatrix{\mathbb E\ar[rrr]^{\Beta}\ar[rd]^{\pi_{Q}}\ar[dd]_{\pi_{B}}& && \mathbb E\duer B\ar[rd]^{\pi_{Q^{**}}}\ar[dd]&\\
& Q\ar[dd]\ar[rrr]^{\beta_Q}&&&C^*\ar[dd]^{q_{C^*}}\\
      B\ar[rd]_{q_{B}}\ar[rrr]^{\Id_B}&&&B\ar[rd]^{q_{B}}&\\
&M\ar[rrr]^{\Id_M}&&&M}
\end{xy}
\end{equation*}}
\noindent defined by the pairing $\langle\cdot,\cdot\rangle$ is an
\textbf{isomorphism of double vector bundles}.  In particular, the
core $C\to M$ of $\mathbb{E}$ is isomorphic to $Q^*\to M$.
 \end{definition}
 Note that, equivalently, a pairing
 $\langle\cdot\,,\cdot\rangle$ on $\mathbb E\to B$ is linear if and
 only if
\begin{equation}\label{linearity_pairing} \langle e_1+_Qe_2, f_1+_Qf_2\rangle_{\mathbb E}=\langle e_1,f_1\rangle_{\mathbb E}+\langle e_2,f_2\rangle_{\mathbb E}
\end{equation}
for $e_1,e_2,f_1,f_2\in \mathbb E$ with $\pi_B(e_i)=\pi_B(f_i)$, $i=1,2$.
In terms of sections, a bilinear pairing $\langle\cdot\,,\cdot\rangle_{\mathbb
  E}\colon \mathbb E\times_B\mathbb E\to\mathbb R$ is symmetric,
nondegenerate and linear over $Q$ if and only if 
the
 core of $\mathbb E$ is isomorphic to $Q^*$ and, via this isomorphism,
\begin{enumerate}
\item $\langle \tau_1^\dagger, \tau_2^\dagger\rangle=0$ for
  $\tau_1,\tau_2\in\Gamma(Q^*)$,
\item $\langle \chi, \tau^\dagger\rangle=q_B^*\langle q,\tau\rangle$
  for $\chi\in\Gamma_B^l(\mathbb E)$ linear over $q\in\Gamma(Q)$ and
  $\tau\in\Gamma(Q^*)$ and
\item $\langle\chi_1,\chi_2\rangle$ is a linear function on $B$ for
  $\chi_1,\chi_2\in \Gamma_B^l(\mathbb E)$.
\end{enumerate}
In the following, we always identify with $Q^*$ the core of a
metric double vector bundle $(\mathbb E, Q; B, M)$.

\subsubsection{Lagrangian decompositions of a metric double vector
  bundle}\label{Lagr_dec}

\begin{definition}
  Let $(\mathbb E, B; Q, M)$ be a metric double vector bundle.  A
  linear splitting $\Sigma\colon Q\times_MB\to \mathbb E$ is said to
  be \textbf{Lagrangian} if its image is (maximal) isotropic in $\mathbb
  E\to B$.  The corresponding horizontal lifts
  and the
  corresponding decomposition of $\mathbb E$ are then also said to be
  \textbf{Lagrangian}.
\end{definition}
Note that by definition, a horizontal lift $\sigma_Q\colon
\Gamma(Q)\to \Gamma^l_B(\mathbb E)$ is Lagrangian if and only if
$\langle \sigma_Q(q_1), \sigma_Q(q_2)\rangle=0$ for all $q_1,q_2\in
\Gamma(Q)$.

Let $\sigma_Q\colon\Gamma(Q)\to\Gamma^l_B(\mathbb E)$ be an arbitrary
horizontal lift. We have seen that by the definition of a linear
metric on $\mathbb E\to B$, the pairing of two linear sections is a
linear function on $B$. This implies with
\[ \sigma_Q(fq)=q_B^*f\cdot\sigma_Q(q) \text{ and }
\ell_{f\beta}=q_B^*f\cdot\ell_\beta \text{ for all } f\in C^\infty(M),
q\in \Gamma(Q) \text{ and } \beta\in\Gamma(B^*)
\]
the existence of a symmetric tensor $\Lambda\in \Gamma(S^2(Q,B^*))$ 
such that
  \begin{equation}\label{lambda_def}
    \langle \sigma_Q(q_1), \sigma_Q(q_2)\rangle_{\mathbb E}=\ell_{\Lambda(q_1,q_2)}.
\end{equation}
In particular, $\Lambda(q,\cdot): Q\to B^*$ is a morphism of vector
bundles for each $q\in\Gamma(Q)$.  Define a new horizontal lift
$\sigma_Q'\colon\Gamma(Q)\to \Gamma^l_B(\mathbb E)$ by
$\sigma_Q'(q)=\sigma_Q(q)-\frac{1}{2}\widetilde{\Lambda(q,\cdot)^*}$ for
all $q\in\Gamma(Q)$. 
Since for $\phi\in\Gamma(\operatorname{Hom}(B,Q^*))$, $\langle
\widetilde{\phi}, \chi\rangle=\ell_{\phi^*(q)}$ if
$\chi\in\Gamma_B^l(\mathbb E)$ is linear over $q\in\Gamma(Q)$, we find then
\[
\langle \sigma_Q'(q_1),\sigma_Q'(q_2)\rangle_{\mathbb E}
=\langle\sigma_Q(q_1),\sigma_Q(q_2)\rangle_{\mathbb E}-\frac{1}{2}\ell_{\Lambda(q_1,q_2)}-\frac{1}{2}\ell_{\Lambda(q_2,q_1)}
=0
\] for all $q_1,q_2\in\Gamma(Q)$. This proves
the following result.
\begin{proposition}\label{symmetrization}
  Let $(\mathbb E, B; Q, M)$ be a metric double vector bundle.  Then
  there exists a Lagrangian splitting of $\mathbb E$.
\end{proposition}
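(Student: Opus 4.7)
The proof will follow the strategy already hinted at in the paragraph preceding the statement: start from any linear splitting, measure the failure of being Lagrangian by a symmetric tensor on $Q$, and use core-linear sections to absorb this tensor into the splitting.

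First I would invoke the existence of some linear splitting $\Sigma\colon Q\times_M B\to\mathbb E$, which is guaranteed by the general result recalled in \S\ref{subsub:lsl}, and let $\sigma_Q\colon\Gamma(Q)\to\Gamma^\ell_B(\mathbb E)$ be the corresponding horizontal lift. The key observation is that by condition (3) in the definition of a linear metric, $\langle\sigma_Q(q_1),\sigma_Q(q_2)\rangle_{\mathbb E}$ is a fibrewise linear function on $B$ for all $q_1,q_2\in\Gamma(Q)$, hence equal to $\ell_{\Lambda(q_1,q_2)}$ for a unique section $\Lambda(q_1,q_2)\in\Gamma(B^*)$. Symmetry of the pairing gives $\Lambda(q_1,q_2)=\Lambda(q_2,q_1)$.

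The main point to check is that $\Lambda$ is $C^\infty(M)$-bilinear, so that it defines a tensor $\Lambda\in\Gamma(S^2Q^*\otimes B^*)$. This uses the two compatibility relations $\sigma_Q(fq)=q_B^*f\cdot\sigma_Q(q)$ (which holds because $\sigma_Q$ is a module morphism) and $\ell_{f\beta}=q_B^*f\cdot\ell_\beta$, together with the bilinearity of the pairing over $C^\infty(B)$ on the right slot. I expect this bookkeeping to be the only slightly delicate step, but nothing more than a direct check.

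Having $\Lambda$, I view it as a vector bundle morphism $\Lambda^\flat\colon Q\to\operatorname{Hom}(Q,B^*)$ and, equivalently, as a section $\Lambda(q,\cdot)^*\in\Gamma(\operatorname{Hom}(B,Q^*))$ for each $q\in\Gamma(Q)$, where the core of $\mathbb E$ has been identified with $Q^*$. I then define the corrected horizontal lift
\[
\sigma_Q'(q)=\sigma_Q(q)-\tfrac{1}{2}\,\widetilde{\Lambda(q,\cdot)^*},
\]
which is again a horizontal lift because the correction term is core-linear. Using the identity $\langle\widetilde{\phi},\chi\rangle=\ell_{\phi^*(q)}$ for $\chi\in\Gamma^\ell_B(\mathbb E)$ linear over $q$ (which comes from condition (2) in the definition of a linear metric and the definition of a core-linear section), together with condition (1) which kills the pairing of the two core-linear correction terms, a short computation yields
\[
\langle\sigma_Q'(q_1),\sigma_Q'(q_2)\rangle_{\mathbb E}=\ell_{\Lambda(q_1,q_2)}-\tfrac{1}{2}\ell_{\Lambda(q_1,q_2)}-\tfrac{1}{2}\ell_{\Lambda(q_2,q_1)}=0
\]
for all $q_1,q_2\in\Gamma(Q)$. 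Thus the image of the linear splitting $\Sigma'$ associated to $\sigma_Q'$ is isotropic, and by a dimension count (its rank equals that of $Q\times_M B$, which is half of $\operatorname{rk}(\mathbb E\to B)=\operatorname{rk}(Q)+\operatorname{rk}(Q^*)$) it is Lagrangian, completing the proof.
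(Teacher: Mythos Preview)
Your proof is correct and follows essentially the same approach as the paper: start from an arbitrary linear splitting, encode the failure of isotropy in the symmetric tensor $\Lambda\in\Gamma(S^2Q^*\otimes B^*)$, and correct the horizontal lift by the core-linear term $-\tfrac{1}{2}\widetilde{\Lambda(q,\cdot)^*}$. The only addition you make beyond the paper's argument is the explicit dimension count for maximality, which is harmless since the paper's definition already treats isotropic as sufficient.
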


Next we show that a change of Lagrangian splitting corresponds to a
skew-symmetric element of $\Gamma(Q^*\otimes B^*\otimes Q^*)$.
\begin{proposition}\label{lagrangian_change_of_split}
  Let $(\mathbb E, B; Q, M)$ be a metric double vector bundle and
  choose a Lagrangian horizontal lift
  $\sigma_Q^1\colon\Gamma(Q)\to\Gamma_B^l(\mathbb E)$.  Then a second
  horizontal lift $\sigma_Q^2\colon\Gamma(Q)\to\Gamma_B^l(\mathbb E)$
  is Lagrangian if and only if the change of lift
  $\phi_{12}\in\Gamma(Q^*\otimes B^*\otimes Q^*)$ satisfies the
  following equality:
\[
\langle \phi_{12}(q),q'\rangle=-\langle\phi_{12}(q'),
q\rangle\in\Gamma(B^*)\] for all $q,q'\in\Gamma(Q)$, i.e.~if and only
if $\phi_{12}\in\Gamma(Q^*\wedge Q^*\otimes B^*)$.
\end{proposition}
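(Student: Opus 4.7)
The plan is to directly compute the pairing $\langle \sigma_Q^2(q_1),\sigma_Q^2(q_2)\rangle$ using the description of the change of splitting, and then read off the condition for this to vanish.

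First I would write $\sigma_Q^2$ in terms of $\sigma_Q^1$ and $\phi_{12}$. By the discussion at the end of \S\ref{subsub:lsl} (specialised to $A=Q$, $C=Q^*$), the difference of two horizontal lifts is the core-linear section attached to $\phi_{12}$; that is,
\[
\sigma_Q^2(q) = \sigma_Q^1(q) -_B \widetilde{\phi_{12}(q)}
\]
for every $q\in\Gamma(Q)$, where $\widetilde{\phi_{12}(q)}\in\Gamma_B^\ell(\mathbb E)$ is the core-linear section associated to $\phi_{12}(q)\in\Gamma(B^*\otimes Q^*)\simeq\Gamma(\operatorname{Hom}(B,Q^*))$.

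Next I would exploit the $B$-bilinearity of the metric $\langle\cdot\,,\cdot\rangle_{\mathbb E}$ to expand
\[
\langle \sigma_Q^2(q_1),\sigma_Q^2(q_2)\rangle_{\mathbb E}
= \langle \sigma_Q^1(q_1),\sigma_Q^1(q_2)\rangle_{\mathbb E}
-\langle \sigma_Q^1(q_1),\widetilde{\phi_{12}(q_2)}\rangle_{\mathbb E}
-\langle \widetilde{\phi_{12}(q_1)},\sigma_Q^1(q_2)\rangle_{\mathbb E}
+\langle \widetilde{\phi_{12}(q_1)},\widetilde{\phi_{12}(q_2)}\rangle_{\mathbb E}.
\]
The first term vanishes because $\sigma_Q^1$ is Lagrangian. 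The last term vanishes because the core-linear sections $\widetilde{\phi_{12}(q_i)}$ are sums of core sections over any $b\in B$, and the metric vanishes on pairs of core sections (property (1) of a linear metric). For the two cross terms I would invoke the computation already recorded in \S\ref{Lagr_dec}: if $\chi\in\Gamma_B^\ell(\mathbb E)$ is linear over $q\in\Gamma(Q)$ and $\phi\in\Gamma(\operatorname{Hom}(B,Q^*))$, then $\langle\widetilde{\phi},\chi\rangle_{\mathbb E}=\ell_{\phi^*(q)}$. Applying this with $\phi=\phi_{12}(q_i)$ and $\chi=\sigma_Q^1(q_j)$ (linear over $q_j$), one obtains
\[
\langle \sigma_Q^2(q_1),\sigma_Q^2(q_2)\rangle_{\mathbb E}
= -\ell_{\langle\phi_{12}(q_1),q_2\rangle}-\ell_{\langle\phi_{12}(q_2),q_1\rangle}
=-\ell_{\langle\phi_{12}(q_1),q_2\rangle+\langle\phi_{12}(q_2),q_1\rangle}.
\]

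Finally I would conclude: since $\ell_\bullet$ is injective on $\Gamma(B^*)$, the right-hand side vanishes for all $q_1,q_2\in\Gamma(Q)$ if and only if $\langle\phi_{12}(q),q'\rangle=-\langle\phi_{12}(q'),q\rangle$, i.e.\ $\phi_{12}\in\Gamma(Q^*\wedge Q^*\otimes B^*)$, which is precisely the claim. There is no real obstacle here beyond bookkeeping of signs and of the identification $C\simeq Q^*$; the only subtlety is making sure the symmetry condition on $\phi_{12}$ is phrased with the right pairing ($Q^*$ against $Q$ to produce an element of $\Gamma(B^*)$), which is what the expansion above delivers.
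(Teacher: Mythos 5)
Your proposal is correct and follows essentially the same route as the paper: both expand the pairing $\langle\sigma_Q^2(q_1),\sigma_Q^2(q_2)\rangle_{\mathbb E}$ bilinearly using $\sigma_Q^1-\sigma_Q^2=\widetilde{\phi_{12}(\cdot)}$ and the identity $\langle\widetilde{\phi},\chi\rangle_{\mathbb E}=\ell_{\phi^*(q)}$ for $\chi$ linear over $q$, arriving at $\ell_{\langle\phi_{12}(q_1),q_2\rangle+\langle\phi_{12}(q_2),q_1\rangle}$ and concluding by injectivity of $\ell_\bullet$. The paper merely organises the same computation as a two-term telescoping of $\langle\sigma^1,\sigma^1\rangle-\langle\sigma^2,\sigma^2\rangle$, which avoids having to argue separately that the core-linear/core-linear term vanishes, but this is a cosmetic difference only.
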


\begin{proof}
  For $q\in\Gamma(Q)$ we have $\langle\widetilde{\phi_{12}(q)},
  \chi\rangle=\ell_{\langle\phi_{12}(q), q'\rangle}$ for any linear
  section $\chi\in\Gamma_B^l(\mathbb E)$ over $q'\in\Gamma(Q)$.  Hence
  we find
\begin{equation*}
\begin{split}
  &\langle \sigma_Q^1(q), \sigma_Q^1(q')\rangle_{\mathbb E}-\langle \sigma_Q^2(q),
  \sigma_Q^2(q')\rangle_{\mathbb E} \\
&=\langle \sigma_Q^1(q)-\sigma_Q^2(q),
  \sigma_Q^1(q')\rangle_{\mathbb E}
  +\langle \sigma_Q^2(q), \sigma_Q^1(q')-\sigma_Q^2(q')\rangle_{\mathbb E}=\ell_{\langle \phi_{12}(q),q'\rangle}+\ell_{\langle q,
    \phi_{12}(q')\rangle}.
\end{split}
\end{equation*}

\end{proof}

The last proposition shows that not any linear section of $\mathbb E$
over $B$ can be obtained as the Lagrangian horizontal lift of a
section of $Q$. This is easy to understand in Example
\ref{metric_connections}. 

Let $(\mathbb E, B; Q, M)$ be a metric double vector bundle. 
Choose a Lagrangian splitting $\Sigma\colon Q\times_MB\to \mathbb E$
and set
\begin{equation}\label{def_of_CE}
\mathcal C(\mathbb
E):=\sigma_B(\Gamma(B))+
\{\tilde\omega\mid \omega\in\Omega^2(Q)\}.
\end{equation}
Note that $\mathcal C(\mathbb E)$ together with $\Gamma^c_Q(\mathbb
E)\simeq\Gamma(Q^*)$ span $\mathbb E$ as a vector bundle over
$Q$. Note also that $\mathcal C(\mathbb E)$ is a sheaf of
$C^\infty(M)$-modules: for $\chi\in\mathcal C(\mathbb E)$ and $f\in
C^\infty(M)$, the product $q_Q^*f\cdot \chi$ is again an element of
$\mathcal C(\mathbb E)$. In particular, for a Lagrangian splitting
$\Sigma\colon Q\times_MB\to \mathbb E$,
$q_Q^*f\cdot(\sigma_B(b)+\tilde\omega)=\sigma_B(fb)+\widetilde{f\omega}$
for all $b\in \Gamma(B)$ and all $\omega\in\Omega^2(Q)$.

We begin by giving an intrinsic geometric description of $\mathcal
C(\mathbb E)$.
\begin{proposition}\label{prop_CE1}
  Let $(\mathbb E, B; Q, M)$ be a metric double vector bundle.  The
  space $\mathcal C(\mathbb E)\subseteq \Gamma^l_Q(\mathbb E)$ is the
  set of linear sections $Q\to \mathbb E$ with isotropic image
  relative to $\langle\cdot\,,\cdot\rangle$.
\end{proposition}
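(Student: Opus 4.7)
The plan is to work in the Lagrangian decomposition $\mathbb I\colon Q\times_M B\times_M Q^*\to \mathbb E$ coming from the Lagrangian splitting $\Sigma$ used to define $\mathcal C(\mathbb E)$, and to reduce the isotropy condition to a coordinate calculation. Any $\chi\in\Gamma^l_Q(\mathbb E)$ over a base section $b\in\Gamma(B)$ is uniquely of the form $\sigma_B(b) +_Q \tilde\phi$ for some $\phi\in\Gamma(Q^*\otimes Q^*)$, viewed as a bundle morphism $Q\to Q^*=C$; translated into the decomposition, this reads $\chi(q_m)=\mathbb I(q_m, b(m), \phi(q_m))$. The task is to compute $\langle\chi(q_1),\chi(q_2)\rangle_{\mathbb E}$ for $q_1,q_2\in Q_m$ and to identify when it vanishes identically.

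First I would establish a coordinate formula for the metric. Writing $\mathbb I(q_i,b_m,\gamma_i)=\Sigma(q_i,b_m)+_B\gamma_i^\dagger(b_m)$ and expanding by bilinearity of the pairing on the fiber of $\mathbb E\to B$, the four resulting terms reduce, via conditions (1)--(3) following \eqref{linearity_pairing} and the Lagrangian property of $\Sigma$, to
\[
\langle \mathbb I(q_1, b_m, \gamma_1), \mathbb I(q_2, b_m, \gamma_2)\rangle_{\mathbb E} = \gamma_1(q_2)+\gamma_2(q_1).
\]
Applied to $\chi$, this yields $\langle\chi(q_1),\chi(q_2)\rangle_{\mathbb E}=\phi(q_1)(q_2)+\phi(q_2)(q_1)$, which vanishes for all $q_1,q_2\in Q_m$ and all $m$ if and only if $\phi\colon Q\to Q^*$ is skew-symmetric, i.e.\ $\phi$ corresponds to an $\omega\in\Omega^2(Q)=\Gamma(\wedge^2Q^*)$ via $\omega(q,q')=\phi(q)(q')$. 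Unwinding the definition of $\mathcal C(\mathbb E)$ given in \eqref{def_of_CE}, this is exactly the condition $\chi\in \mathcal C(\mathbb E)$, so both inclusions follow at once.

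The only delicate point is the coordinate formula for the metric: the pairing must be evaluated on the fiber of $\mathbb E\to B$ (since that is the structure to which the metric is linear), which forces one to rewrite $\mathbb I(q_i,b_m,\gamma_i)$ as a sum over $B$ and to handle the two $\Sigma$-$\gamma^\dagger$ cross terms carefully, using the identification of $\tau^\dagger(0^B_m)$ with the core element $\overline{\tau(m)}$. Once this formula is in place the characterisation of isotropic linear sections is transparent, and in particular it shows \emph{a posteriori} that $\mathcal C(\mathbb E)$ is intrinsic, independent of the chosen Lagrangian splitting.
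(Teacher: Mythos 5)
Your proposal is correct and follows essentially the same route as the paper: decompose an arbitrary linear section as $\chi=\sigma_B(b)+\widetilde\phi$ relative to the chosen Lagrangian splitting, move the core term from a $+_Q$-sum to a $+_B$-sum via the interchange law, and expand the pairing on the fibre of $\mathbb E\to B$ using the Lagrangian property together with conditions (1)--(2), obtaining $\langle\chi(q),\chi(q')\rangle=\phi(q)(q')+\phi(q')(q)$, so isotropy is equivalent to $\phi\in\Omega^2(Q)$, i.e.\ to $\chi\in\mathcal C(\mathbb E)$. This matches the paper's argument (which treats the easy inclusion by reference to the earlier computation and the converse by exactly this expansion), so no further comment is needed.
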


\begin{proof}
  We have already seen that for any section $\chi\in\mathcal
  C(\mathbb E)$, the pairing $\langle\chi(q),\chi(q')\rangle$
  vanishes for all $(q,q')\in Q\times_MQ$. Conversely, consider a
  linear section $\chi\in\Gamma_Q^l(\mathbb E)$ with isotropic
  image. Let $b\in\Gamma(B)$ be the basis section of $\chi$ and choose a
  Lagrangian splitting $\Sigma\colon Q\times_M B\to\mathbb E$. Then
  $\chi=\sigma_B(b)+\widetilde{\phi}$ with
  $\phi\in\Gamma(Q^*\otimes Q^*)$ and we get for all $q,q'\in Q$ with
  $q_Q(q)=q_Q(q')=m\in M$:
\begin{equation*}
\begin{split}
  0&\,\,=\langle\chi(q),\chi(q') \rangle=\left\langle
    \sigma_B(b)(q)+_Q(0^{\mathbb
      E}_{q}+_B\overline{\phi(q)}),\sigma_B(b)(q')+_Q(0^{\mathbb
      E}_{q'}+_B\overline{\phi(q')})\right\rangle\\
&\overset{\eqref{add_add}}{=}\left\langle
    \sigma_B(b)(q)+_B(0^{\mathbb
      E}_{b(m)}+_Q\overline{\phi(q)}),\sigma_B(b)(q')+_B(0^{\mathbb
      E}_{b(m)}+_Q\overline{\phi(q')})\right\rangle\overset{\eqref{linearity_pairing}}{=}\phi(q)(q')+\phi(q')(q).
\end{split}
\end{equation*}
Therefore, $\phi\in\Omega^2(Q)$.
\end{proof}

\begin{proposition}\label{prop_CE2}
  Let $(\mathbb E, B; Q, M)$ be a metric double vector bundle.  The
  space $\mathcal C(\mathbb E)\subseteq \Gamma^l_Q(\mathbb E)$ is a
  locally free and finitely generated sheaf of $C^\infty(M)$-modules,
  that fits in the following short exact sequence of sheaves of
  $C^\infty(M)$-modules:
\begin{equation}\label{ses_isotropic}
0 \longrightarrow \Omega^2(Q) \hookrightarrow \mathcal C(\mathbb E)
\longrightarrow \Gamma(B) \longrightarrow 0.
\end{equation}
The maps are the restrictions of the maps in
\eqref{fat_seq_gamma}. Lagrangian splittings of $\mathbb E$ are
equivalent to splittings of \eqref{ses_isotropic}.
\end{proposition}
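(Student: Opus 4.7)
The plan is to leverage the intrinsic description of $\mathcal{C}(\mathbb{E})$ obtained in Proposition \ref{prop_CE1}: it is the set of linear sections of $\mathbb{E}\to Q$ with isotropic image. This characterisation is manifestly independent of the Lagrangian splitting used to define $\mathcal{C}(\mathbb{E})$, and it immediately gives the $C^\infty(M)$-module structure, since $q_Q^*f\cdot\chi$ inherits isotropy from $\chi$ for any $f\in C^\infty(M)$ by linearity of the pairing over $B$.

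For the short exact sequence, I would restrict the general sequence \eqref{fat_seq_gamma} from \S\ref{subsub:lsl}, which specialises here to
\[
0\to\Gamma(Q^*\otimes Q^*)\to\Gamma^\ell_Q(\mathbb{E})\to\Gamma(B)\to 0
\]
since the core of $\mathbb{E}$ is $Q^*$. Surjectivity of the projection $\mathcal{C}(\mathbb{E})\to\Gamma(B)$ is immediate from Proposition \ref{symmetrization}: any Lagrangian lift $\sigma_B$ takes values in $\mathcal{C}(\mathbb{E})$ and hits every section of $B$. Exactness in the middle is the only small verification required: an element of the kernel is a core-linear section $\widetilde\phi$ with $\phi\in\Gamma(Q^*\otimes Q^*)$ and isotropic image; repeating the computation at the end of the proof of Proposition \ref{prop_CE1} with vanishing base section yields $\phi(q)(q')+\phi(q')(q)=0$ for all $q,q'\in\Gamma(Q)$, so $\phi\in\Omega^2(Q)$.

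Local freeness and finite generation then follows from choosing a local Lagrangian splitting via Proposition \ref{symmetrization} over each member $U_\alpha$ of a trivialising cover of $M$: the module $\mathcal{C}(\mathbb{E})\vert_{U_\alpha}$ is freely generated as a $C^\infty(U_\alpha)$-module by $\sigma_B$ applied to a local frame of $B$, together with the core-linear sections associated to a local basis of $\wedge^2 Q^*$. This description also shows that the sequence \eqref{ses_isotropic} is a restriction of \eqref{fat_seq_gamma}.

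For the final claim, a splitting of \eqref{ses_isotropic} is by definition a $C^\infty(M)$-linear section $s\colon\Gamma(B)\to\mathcal{C}(\mathbb{E})$ of the projection, i.e., a horizontal lift $\sigma_B\colon\Gamma(B)\to\Gamma^\ell_Q(\mathbb{E})$ whose image lies in the isotropic locus $\mathcal{C}(\mathbb{E})$; that is, a Lagrangian horizontal lift. By the equivalence between horizontal lifts and linear splittings recalled in \S\ref{subsub:lsl}, this is exactly a Lagrangian splitting of $\mathbb{E}$. I do not anticipate a serious obstacle here; the only non-formal step is the short skew-symmetry computation identifying the kernel of the projection, which is a direct repetition of the argument already carried out in the proof of Proposition \ref{prop_CE1}.
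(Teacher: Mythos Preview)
Your proposal is correct and follows essentially the same approach as the paper: both arguments establish local freeness by choosing a Lagrangian splitting and exhibiting the explicit generators $\sigma_B(b_i)$ together with $\widetilde{\tau_i\wedge\tau_j}$, identify the kernel of the projection to $\Gamma(B)$ with $\Omega^2(Q)$ by repeating the skew-symmetry computation from Proposition~\ref{prop_CE1}, and obtain surjectivity and the correspondence with Lagrangian splittings directly from the existence of Lagrangian lifts (Proposition~\ref{symmetrization}). Your write-up is in fact slightly more explicit than the paper's about the $C^\infty(M)$-module structure and the final equivalence, but there is no genuine difference in strategy.
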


\begin{proof}
  Choose a Lagrangian splitting $\Sigma\colon B\times_MQ\to\mathbb E$
  of $\mathbb E$. Let $r_1$ be
the rank of $Q$ and $r_2$ the rank of $B$.
Choose $p\in M$. Then there exists an open neighborhood
  $U$ of $p$ in $M$ that trivialises both $Q$ and $B$. Choose local
  basis sections $b_1,\ldots, b_{r_2}\in\Gamma_U(B)$ of $B$ over $U$
  and local basis sections $\tau_1,\ldots,\tau_{r_1}\in\Gamma_U(Q^*)$
  of $Q^*$ over $U$. Then by \eqref{def_of_CE} and the considerations
  below it, $\mathcal C_U(\mathbb E)$ is freely generated as a
  $C^\infty_U(M)$-module by
  $\{\sigma_B(b_1),\ldots,\sigma_B(b_{r_2})\}\cup\{\widetilde{\tau_{i}\wedge\tau_j}\mid
  1\leq i<j\leq r_1\}$.

  It is easy to check as in the proof of Proposition \ref{prop_CE1}
  that isotropic core-linear sections of $\mathbb E\to Q$ are exactly
  the sections $\widetilde \phi$ for $\phi\in\Omega^2(Q)$. Since the
  inclusion $\Gamma(Q^*\otimes Q^*)\hookrightarrow \Gamma_Q^l(\mathbb
  E)$ is injective (see \eqref{fat_seq_gamma}), its restriction to
  $\Omega^2(Q)\hookrightarrow \mathcal C(\mathbb E)$ is also
  injective. The rest follows from the construction of $\mathcal
  C(\mathbb E)$ in \eqref{def_of_CE}, or more precisely from the
  existence of Lagrangian splittings.
\end{proof}

\begin{corollary}\label{cor_fat_CE}
  There exists a vector bundle $\widehat{B}$ over $M$ which set of
  sections is isomorphic to $\mathcal C(\mathbb E)$.  The short exact
  sequence \eqref{ses_isotropic} induces a short exact sequence of
  vector bundles over $M$:
\begin{equation*}
0 \longrightarrow Q^*\wedge Q^* \hookrightarrow \widehat{B}
\longrightarrow B \longrightarrow 0.
\end{equation*}
\end{corollary}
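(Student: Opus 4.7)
The plan is to deduce this corollary essentially formally from Proposition \ref{prop_CE2} by invoking the equivalence between locally free finitely generated sheaves of $C^\infty(M)$-modules and smooth vector bundles over $M$ recalled in \S\ref{classical_eq}. Proposition \ref{prop_CE2} already guarantees that $\mathcal{C}(\mathbb{E})$ is such a sheaf, so applying this equivalence produces a vector bundle $\widehat{B}\to M$ together with an identification $\Gamma(\widehat{B})\simeq \mathcal{C}(\mathbb{E})$.

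Next, I would observe that the outer terms of \eqref{ses_isotropic} are already sheaves of sections of vector bundles, namely $\Omega^2(Q)=\Gamma(Q^*\wedge Q^*)$ and $\Gamma(B)$. To apply the equivalence of \S\ref{classical_eq} to the maps of \eqref{ses_isotropic}, one only needs these maps to be $C^\infty(M)$-linear; this is transparent from the remarks right after \eqref{def_of_CE}, where it is noted that $q_Q^*f\cdot\sigma_B(b)=\sigma_B(fb)$ and $q_Q^*f\cdot\widetilde{\omega}=\widetilde{f\omega}$. The equivalence then converts \eqref{ses_isotropic} into a sequence of vector bundle morphisms $Q^*\wedge Q^*\to \widehat{B}\to B$ over the identity on $M$.

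Finally, to obtain exactness fiberwise, I would invoke the existence of a Lagrangian splitting $\Sigma$ of $\mathbb{E}$ (Proposition \ref{symmetrization}): the associated horizontal lift $\sigma_B\colon\Gamma(B)\to\mathcal{C}(\mathbb{E})$ is a global splitting of \eqref{ses_isotropic} as a sequence of sheaves of $C^\infty(M)$-modules. Under the functor of \S\ref{classical_eq}, such a splitting transports to a vector bundle splitting, so the resulting sequence of vector bundles is split exact. The local basis of $\mathcal{C}(\mathbb{E})$ exhibited in the proof of Proposition \ref{prop_CE2} further confirms that $\operatorname{rk}(\widehat{B})=\binom{r_1}{2}+r_2$, consistent with the claimed short exact sequence. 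There is no genuine obstacle here: all of the substance resides in Proposition \ref{prop_CE2}, and the corollary is merely its translation from sheaves to vector bundles.
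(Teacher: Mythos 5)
Your argument is correct and is exactly how the paper intends the corollary to be read: it is stated without separate proof as the direct translation of Proposition \ref{prop_CE2} through the sheaf--vector bundle equivalence of \S\ref{classical_eq}, with the Lagrangian splittings of Proposition \ref{symmetrization} guaranteeing the (split) exactness of the resulting sequence of bundles. Nothing further is needed.
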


\bigskip

We end this section with a characterisation of Lagrangian splittings
that will be useful in \S\ref{sec:Poisson}.  Recall from Section \ref{dual} that
given a linear splitting $\Sigma\colon Q\times_M B\to \mathbb E$, one
can construct the dual linear splitting
$\Sigma^\star\colon Q^{**}\times_M B\to \mathbb E\duer B$.
\begin{lemma}\label{lem_Lagr_split_sections}
  Let $(\mathbb E; Q, B; M)$ be a metric double vector bundle and
  choose a linear splitting $\Sigma$ of $\mathbb E$.  Then $\Sigma$ is
  Lagrangian if and only if the linear map $\Beta\colon \mathbb E\to
  \mathbb E\duer B$ sends $\sigma_B(b)$ to $\sigma^\star_{B}(b)$ for
  all $b\in\Gamma(B)$.
\end{lemma}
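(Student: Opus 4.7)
The plan is to check the claimed equality pointwise: for fixed $b \in \Gamma(B)$ and $q_m \in Q$, both $\Beta(\sigma_B(b)(q_m))$ and $\sigma^\star_B(b)(q_m) = \Sigma^\star(b(m), q_m)$ belong to $\mathbb E\duer B$ and both project to $b(m) \in B$, so I will test their equality by evaluating them as linear forms on the fibre $\pi_B^{-1}(b(m)) \subset \mathbb E$. Using the splitting $\Sigma$, every element of this fibre can be written uniquely as
\[
e = \Sigma(q'_m, b(m)) +_B \tau^\dagger(b(m))
\]
for some $q'_m \in Q_m$ and $\tau(m) \in Q^*_m$, so only the values on such $e$ need to be compared.

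A preliminary step verifies that the $C^*$-projections of both elements coincide: writing $\sigma_B(b)(q_m) = \sigma_Q(q)(b(m))$ for any local extension $q \in \Gamma(Q)$ of $q_m$, and applying property~(2) of a linear metric, one computes
\[
\beta_Q(q_m)(\tau_m) = \langle \sigma_B(b)(q_m), \tau^\dagger(b(m))\rangle_{\mathbb E} = q_B^*\langle q,\tau\rangle(b(m)) = \langle q_m,\tau_m\rangle,
\]
so $\beta_Q(q_m) = q_m$, matching the $C^*$-projection of $\Sigma^\star(b(m),q_m)$. The main computation then uses the defining identities \eqref{lemma_dual_splitting}, read with $B$ in the role of the ``$A$-side'', to get $\Sigma^\star(b(m), q_m)(e) = \langle q_m, \tau(m)\rangle$. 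For the other evaluation, I expand $\langle \sigma_B(b)(q_m), e\rangle_{\mathbb E}$ by bilinearity over $B$, and apply properties~(2) and~(3) of the metric together with \eqref{lambda_def}, to obtain
\[
\Beta(\sigma_B(b)(q_m))(e) = \Lambda(q_m, q'_m)(b(m)) + \langle q_m, \tau(m)\rangle,
\]
where $\Lambda \in \Gamma(S^2(Q, B^*))$ is the symmetric tensor associated to $\Sigma$ via \eqref{lambda_def}.

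Subtracting, the two elements coincide for every $e$ if and only if $\Lambda(q_m, q'_m)(b(m)) = 0$ for all $b \in \Gamma(B)$ and $q_m, q'_m \in Q_m$, which in view of \eqref{lambda_def} is precisely the statement that $\langle \sigma_Q(q), \sigma_Q(q')\rangle_{\mathbb E} = 0$ for all $q,q' \in \Gamma(Q)$, i.e.\ that $\Sigma$ is Lagrangian. The only delicate point in carrying out this plan is bookkeeping: one must make sure to apply \eqref{lemma_dual_splitting} with the dualisation performed in the $B$-direction (not the $Q$-direction), so that the roles of the two sides in \eqref{lemma_dual_splitting} are swapped. Beyond that, the argument is just a routine unpacking of axioms (1)--(3) of a linear metric.
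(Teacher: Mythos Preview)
Your proof is correct and follows essentially the same approach as the paper's: both test the equality $\Beta(\sigma_B(b)(q_m))=\Sigma^\star(b(m),q_m)$ by pairing each side over $B$ against the spanning elements $\Sigma(q'_m,b(m))$ and $\tau^\dagger(b(m))$ of the fibre, finding that the core pairing always matches while the pairing against $\Sigma(q'_m,b(m))$ gives $0$ on the $\Sigma^\star$-side and $\langle\sigma_Q(q),\sigma_Q(q')\rangle_{\mathbb E}(b(m))$ on the $\Beta$-side. Your version is slightly more explicit in packaging the latter via the tensor $\Lambda$ of \eqref{lambda_def} and in separately checking the $C^*$-projection, but the argument is the same.
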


\begin{proof}
  Recall from \eqref{lemma_dual_splitting} that given a horizontal lift
  $\sigma_B\colon\Gamma(B)\to\Gamma_Q^l(\mathbb E)$, the dual
  horizontal lift
  $\sigma^\star_B\colon\Gamma(B)\to\Gamma_{Q^{**}}^l(\mathbb E\duer
  B)$ can be defined by
\[\langle \sigma_B^\star(b)(p_m),\sigma_B(b)(q_m)\rangle_B=0, \qquad
\langle \sigma_B^\star(b)(p_m),\tau^\dagger(b(m))\rangle_B=\langle p_m,
\tau(m)\rangle
\]
for all $b\in\Gamma(B)$, $\tau\in\Gamma(Q^*)$, $q_m\in Q$ and $p_m\in
Q^{**}\simeq Q$.

On the other hand, if $\Sigma\colon B\times_MQ\to\mathbb E$ is a
Lagrangian splitting, we have \begin{equation*}
\begin{split}
\langle \Beta(\sigma_B(b)(p(m))),
\sigma_B(b)(q(m))\rangle_B&= \langle \sigma_B(b)(p(m)),
\sigma_B(b)(q(m))\rangle_{\mathbb E}\\
&=\langle
\sigma_Q(p),\sigma_Q(q)\rangle_{\mathbb E}(b(m))=0
\end{split}
\end{equation*}
 for all $q,p\in\Gamma(Q)$ and
$b\in\Gamma(B)$, and
\begin{equation*}
\begin{split}
  \langle \Beta(\sigma_B(b)(p(m))), \tau^\dagger(b(m))\rangle_B&=
  \langle \sigma_B(b)(p(m)), \tau^\dagger(b(m))\rangle_{\mathbb E}\\
  &=\langle \sigma_Q(p)(b(m)), \tau^\dagger(b(m))\rangle_{\mathbb
    E}=\langle p,\tau\rangle(m)
\end{split}
\end{equation*}
for all $\tau\in\Gamma(Q^*)$. This proves that $\Beta$ sends the
linear section $\sigma_B(b)\in\Gamma_Q^l(\mathbb E)$ to
$\sigma^\star_B(b)$ in $\Gamma^l_{Q^{**}}(\mathbb E\duer B)$.  It is
easy to see from the four equalities above that this condition is
necessary for $\Sigma$ to be Lagrangian.
\end{proof}

\subsubsection{Examples of metric double vector bundles}
Next we describe a couple of examples of metric double vector bundles.
\begin{example}\label{metric_connections}
  Let $E\to M$ be a metric vector bundle, i.e.~a vector bundle endowed
  with a symmetric non-degenerate pairing
  $\langle\cdot\,,\cdot\rangle\colon E\times_M E\to \R$.  Then
  $E\simeq E^*$ and the tangent double is a metric double vector
  bundle $(TE,E;TM,M)$ with pairing $TE\times_{TM}TE\to \R$ the
  tangent of the pairing $E\times_M E\to \R$. In particular, we have
\[\langle Te_1, Te_2\rangle_{TE}=\ell_{\dr\langle e_1,e_2\rangle}, 
\quad \langle Te_1, e_2^\dagger\rangle_{TE}=p_M^* \langle e_1,e_2\rangle
\quad \text{ and }\langle e_1^\dagger, e_2^\dagger\rangle_{TE}=0\] for
$e_1,e_2\in\Gamma(E)$.

Recall from \S\ref{tangent_double} that linear splittings of $TE$ are
equivalent to linear connections $\nabla\colon
\mx(M)\times\Gamma(E)\to\Gamma(E)$.  We have then for all $e_1,e_2\in\Gamma( E)$:
  \[\left\langle \sigma_{ E}^\nabla(e_1), e_2^\dagger\right\rangle=\left\langle
    Te_1-\widetilde{\nabla_\cdot e_1},
    e_2^\dagger\right\rangle=p_M^*\langle e_1, e_2\rangle\] and
  \[\left\langle \sigma_{ E}^\nabla (e_1), \sigma_{ E}^\nabla (e_2)\right\rangle=\left\langle
    Te_1-\widetilde{\nabla_\cdot e_1}, Te_2-\widetilde{\nabla_\cdot
    e_2}\right\rangle=\ell_{\dr\langle e_1, e_2\rangle-\langle
    e_2, \nabla_\cdot e_1\rangle-\langle e_1, \nabla_\cdot e_2\rangle}.\] 
The Lagrangian splittings of $TE$
are hence exactly the linear splittings that correspond to \textbf{metric}
connections, i.e.~linear connections $\nabla\colon
\mx(M)\times\Gamma(E)\to\Gamma(E)$ that preserve the metric:
$\langle\nabla_\cdot e_1, e_2\rangle+\langle e_1,\nabla_\cdot
e_2\rangle=\dr\langle e_1, e_2\rangle$ for $e_1,e_2\in\Gamma(E)$.

More generally, the isotropic linear vector fields on $E$ are the
linear vector fields corresponding to derivations of $E$ that preserve the
pairing.
\end{example}

\begin{example}\label{met_TET*E}
Let $q_E\colon E\to M$ be a vector bundle and consider the double vector bundle
\begin{equation*}
\begin{xy}
  \xymatrix{
    TE\oplus T^*E\ar[rr]^{\Phi_E:=({q_E}_*, r_E)}\ar[d]_{\pi_E}&& TM\oplus E^*\ar[d]\\
    E\ar[rr]_{q_E}&&M }
\end{xy}
\end{equation*} with sides $E$ and $TM\oplus E^*\to M$, and 
with core $E\oplus T^*M\to M$.  The projection
$r_E\colon T^*E\to E^*$ is defined by
\[r_E(\theta_{e_m})\in E^*_m, \qquad \langle r_E(\theta_{e_m}),
e_m'\rangle =\left\langle \theta_{e_m},
  \left.\frac{d}{dt}\right\an{t=0}e_m+te_m'\right\rangle,
 \]
 and is a fibration of vector bundles over the projection $q_E\colon
 E\to M$.  The core elements are identified in the following manner with elements of $E\oplus
 T^*M\to M$. For $m\in M$ and
 $(e_m,\theta_m)\in E_m\times T_m^*M$, the pair
\[\left(\left.\frac{d}{dt}\right\an{t=0}te_m, (T_{0_m^E}q_E)^*\theta_m
\right)\] projects to $(0^{TM}_m,0^{E^*}_m)$ under $\Phi_E$ and to
$0^E_m$ under $\pi_E$. Conversely, any element of $TE\oplus T^*E$ in
the double kernel can be written in this manner.  Next recall that
$TE\oplus T^*E\to E$ has a natural symmetric nondegenerate pairing
given by
\begin{equation}\label{standard_pairing}
\langle (v^1_{e_m},\theta^1_{e_m}),  (v^2_{e_m},\theta^2_{e_m})\rangle=\theta^1_{e_m}(v^2_{e_m})+\theta^2_{e_m}(v^1_{e_m}),
\end{equation}
the natural pairing underlying the standard Courant algebroid
structure on $TE\oplus T^*E\to E$.

Dull algebroids and Dorfman connections were introduced in
\cite{Jotz13a}. 
A \textbf{Dorfman $TM\oplus E^*$-connection on its dual $E\oplus T^*M$} is an $\R$-bilinear map
\[\Delta\colon \Gamma(TM\oplus E^*)\times\Gamma(E\oplus T^*M)\to\Gamma(E\oplus T^*M)\]
satisfying \begin{enumerate}
\item $\Delta_q(f\cdot\tau)=f\cdot\Delta_q\tau+\ldr{\pr_{TM}(q)}(f)\cdot \tau$,
\item $\Delta_{f\cdot q}\tau=f\cdot \Delta_q\tau+\langle q,\tau\rangle\cdot(0,\dr f)$, and 
\item $\Delta_q(0,\dr f)=(0,\dr(\ldr{\pr_{TM}q}f))$
\end{enumerate}
for all $q\in\Gamma(TM\oplus E^*)$, $\tau\in\Gamma(E\oplus T^*M)$ and
$f\in C^\infty(M)$. The first axiom says that $\Delta$ defines a map
$\Delta\colon q\mapsto \Delta_q\in\operatorname{Der}(E\oplus
T^*M)$. The dual of this map in the sense of derivations defines a
\textbf{dull bracket on sections of $TM\oplus E^*$}, i.e.~an
$\R$-bilinear map \[\llb\cdot\,,\cdot\rrb_\Delta\colon \Gamma(TM\oplus
E^*)\times \Gamma(TM\oplus E^*)\to \Gamma(TM\oplus E^*)\] satisfying 
\begin{enumerate}
\item $\pr_{TM}\llb
  q_1,q_2\rrb_\Delta=[\pr_{TM}q_1,\pr_{TM}q_2]$,
\item $\llb f_1q_1, f_2q_2\rrb=f_1f_2\llb q_1,q_2\rrb_\Delta+f_1\ldr{\pr_{TM}q_1}(f_2)q_2-f_2\ldr{\pr_{TM}q_2}(f_1)q_1$
\end{enumerate}
for all $q_1,q_2\in\Gamma(TM\oplus E^*)$ and $f_1,f_2\in
C^\infty(M)$.

We prove in \cite{Jotz13a} that linear splittings of $TE\oplus T^*E$
are in bijection with dull brackets on sections of $TM\oplus E^*$, or
equivalently with Dorfman connections $\Delta\colon \Gamma(TM\oplus
E^*)\times\Gamma(E\oplus T^*M)\to\Gamma(E\oplus T^*M)$. Choose such a
Dorfman connection.  For any pair $(X,\epsilon)\in\Gamma(TM\oplus
E^*)$, the horizontal lift $\sigma:=\sigma_{TM\oplus
  E^*}^\Delta\colon\Gamma(TM\oplus E^*)\to\Gamma_E(TE\oplus
T^*E)=\mx(E)\times\Omega^1(E)$ is given by
\[\sigma(X,\epsilon)(e_m)=\left(T_me X(m), \dr
    \ell_\epsilon(e_m)\right)-\Delta_{(X,\epsilon)}(e,0)^\dagger(e_m)
\]
for all $e_m\in E$, where for $(e,\theta)\in\Gamma(e\oplus T^*M)$, the
pair $(e,\theta)^\dagger\in\Gamma_E(TE\oplus T^*E)$ is defined by
$(e,\theta)^\dagger=(e^\uparrow, q_E^*\theta)$.

Since the vector bundle $TM\oplus E^*$ is anchored by the morphism
$\pr_{TM}\colon TM\oplus E^*\to TM$, the $TM$-part
of $\llb q_1, q_2\rrb_\Delta +\llb q_2, q_1\rrb_\Delta$ is trivial and
this sum can be seen as an element of $\Gamma(E^*)$.  We proved the
following result in \cite{Jotz13a}.
\begin{theorem}\label{Lagrangin1}
  Choose $q, q_1,q_2\in\Gamma(TM\oplus E^*)$ and $\tau,\tau_1,
  \tau_2\in\Gamma(E\oplus T^*M)$.  The natural pairing on fibres of
  $TE\oplus T^*E\to E$ is given by
 \begin{enumerate}
\item $\left\langle \sigma(q_1), \sigma(q_2)\right\rangle=\ell_{\llb q_1, q_2\rrb_\Delta +\llb q_2, q_1\rrb_\Delta}$,
\item $\left\langle \sigma(q),
    \tau^\dagger\right\rangle=q_E^*\langle q, \tau\rangle$.
\item $\left\langle \tau_1^\dagger,
    \tau_2^\dagger\right\rangle=0$.
\end{enumerate}
\end{theorem}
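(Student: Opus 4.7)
The plan is to treat the three identities in decreasing order of depth: (3) is immediate from unpacking the core section, (2) reduces to (3) after noticing that the correction term in $\sigma(q)$ is itself a core section, and (1)---the substantive one---follows from a bookkeeping computation closed by the defining duality between the Dorfman connection $\Delta$ and the dull bracket $\llb \cdot\,, \cdot \rrb_\Delta$.

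For (3) and (2): a core section $(e',\theta')^\dagger \in \Gamma_E(TE \oplus T^*E)$ is pointwise $(e'^\uparrow, q_E^*\theta')$. The pairing \eqref{standard_pairing} of two such core sections at $e_m$ vanishes because pullback $1$-forms annihilate vertical vectors, proving (3). For (2), I would expand $\sigma(q)(e_m)$ with the given formula and observe that the correction $-\Delta_q(e,0)^\dagger(e_m)$ is a core section, so its pairing with $\tau^\dagger(e_m)$ vanishes by (3). The remaining pairing of $(T_meX(m),\dr\ell_\epsilon(e_m))$ with $(e'^\uparrow(e_m), q_E^*\theta'(e_m))$ yields exactly $\theta'(X)(m)+\langle\epsilon,e'\rangle(m)=\langle q,\tau\rangle(m)$, i.e.~$q_E^*\langle q,\tau\rangle(e_m)$.

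For (1): since both sides are fibrewise-linear functions on $E$, it is enough to verify equality after restriction along an arbitrary section $e\in\Gamma(E)$. Writing $q_i=(X_i,\epsilon_i)$ and setting $(\tilde e_i,\tilde\theta_i):=\Delta_{q_i}(e,0)\in\Gamma(E\oplus T^*M)$, a direct expansion of $\langle \sigma(q_1)(e(m)),\sigma(q_2)(e(m))\rangle$ via \eqref{standard_pairing}, after the same vertical--pullback and core--core annihilations used above, collapses to
\[
\ldr{X_1}\langle \epsilon_2,e\rangle + \ldr{X_2}\langle \epsilon_1,e\rangle - \langle q_1,\Delta_{q_2}(e,0)\rangle - \langle q_2,\Delta_{q_1}(e,0)\rangle.
\]

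To close the argument I would invoke the defining duality between the Dorfman connection and the dull bracket,
\[
\ldr{\pr_{TM}q_1}\langle q_2,\tau\rangle = \langle \llb q_1,q_2\rrb_\Delta,\tau\rangle + \langle q_2,\Delta_{q_1}\tau\rangle,
\]
with $\tau=(e,0)$ and then with $q_1,q_2$ interchanged. Adding the two resulting identities rewrites the display above as $\langle \llb q_1,q_2\rrb_\Delta + \llb q_2,q_1\rrb_\Delta,(e,0)\rangle$, and since this sum has vanishing $TM$-component (as recalled before the theorem) it equals $\ell_{\llb q_1,q_2\rrb_\Delta + \llb q_2,q_1\rrb_\Delta}$ evaluated at $e(m)$, finishing (1). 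The only real obstacle is bookkeeping: correctly collecting the six cross-terms in the expansion and dropping those that vanish. No deeper input is required beyond the explicit form of $\sigma = \sigma_{TM\oplus E^*}^\Delta$ and the derivation-duality definition of $\llb\cdot\,,\cdot\rrb_\Delta$.
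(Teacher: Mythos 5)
Your proof is correct. Note that the paper itself gives no argument for this statement: it is quoted from the earlier work \cite{Jotz13a} ("We proved the following result in \cite{Jotz13a}"), so there is no internal proof to compare against. Your computation is exactly the kind of direct verification that citation stands for: items (3) and (2) follow from the fact that pullback one-forms annihilate vertical vectors, and for (1) the expansion at a point $e(m)$ collapses to $\ldr{X_1}\langle\epsilon_2,e\rangle+\ldr{X_2}\langle\epsilon_1,e\rangle-\langle q_1,\Delta_{q_2}(e,0)\rangle-\langle q_2,\Delta_{q_1}(e,0)\rangle$, which the derivation-duality defining $\llb\cdot\,,\cdot\rrb_\Delta$ (applied twice, with the arguments interchanged) rewrites as $\langle\llb q_1,q_2\rrb_\Delta+\llb q_2,q_1\rrb_\Delta,(e,0)\rangle=\ell_{\llb q_1,q_2\rrb_\Delta+\llb q_2,q_1\rrb_\Delta}(e(m))$; since every point of $E$ lies on a section, this suffices. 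The only implicit ingredient you take from the quoted formula for $\sigma=\sigma^\Delta_{TM\oplus E^*}$ is its independence of the chosen extension $e$ of $e_m$, which is legitimate since that formula is part of the given data.
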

As a consequence, the natural pairing on fibres of $TE\oplus T^*E\to
E$ is a linear metric on $(TE\oplus T^*E;TM\oplus E^*,E;M)$ and the
Lagrangian splittings are equivalent to skew-symmetric dull brackets
on sections of the anchored vector bundle $(TM\oplus E^*,\pr_{TM})$.

\end{example}

\subsection{Involutive double vector bundles}
We begin here the study of involutive double vector bundles, which we
find to be in duality with metric double vector bundles.  Note that in
the original work (see \cite{Pradines77}) where double vector bundles
were introduced, Pradines already introduced involutive double vector
bundles, called \emph{fibr\'es vectoriels doubles \`a sym\'etrie
  inverse}\footnote{\emph{Double vector bundles with inverse symmetry}
  in \cite{Pradines77}. A symmetry of a double vector bundle is an
  involution as in Definition \ref{def_idvb}, but without the
  condition on the core morphism. It is \emph{direct} if the induced
  morphism on the core is the identity, and \emph{inverse} if it is
  minus the identity.}, for his study of nonholonomic jets.

\begin{definition}\label{def_idvb}
An \textbf{involutive double vector bundle} is a double vector bundle 
$(D,Q,Q,M)$ with core $B^*$ equipped with a morphism
{\small \begin{equation*}
  \begin{xy}
    \xymatrix{D\ar[rrr]^{\mathcal I}\ar[rd]^{\pi_1}\ar[dd]_{\pi_2}& && D\ar[rd]^{\pi_2}\ar[dd]&\\
& Q\ar[dd]\ar[rrr]^{\Id_Q}&&&Q\ar[dd]^{q_{Q}}\\
      Q\ar[rd]_{q_{Q}}\ar[rrr]^{\Id_Q}&&&Q\ar[rd]^{q_{Q}}&\\
&M\ar[rrr]^{\Id_M}&&&M}
\end{xy}
\end{equation*}}
satisfying $\mathcal I^2=\Id_D$ and $\pi_1\circ\mathcal I=\pi_2$, $\pi_2\circ\mathcal I=\pi_1$
and with core morphism $-\Id_{B^*}\colon B^*\to B^*$.
 
\end{definition}

We begin by proving that metric double vector bundles are dual to
involutive double vector bundles.
\begin{proposition}\label{dual_inv_metric}
\begin{enumerate}
\item  Let $(D,Q,Q,M)$ be an involutive double vector bundle with
  involution $\mathcal I$ and core $B^*$. Then the dual
  $(\mathbb E:=D\duer {\pi_1},Q,B,M)$ inherits a linear metric
  $\mathbb E\times_B\mathbb E\to\R$ defined by
\[ \langle e_1,e_2\rangle_{\mathbb E}=\langle e_1, d\rangle_Q+\langle e_2,\mathcal
I(d)\rangle_Q
\]
for $(e_1,e_2)\in\mathbb E\times_B\mathbb E$ and any $d\in D$ with
$\pi_Q(e_1)=\pi_1(d)$ and $\pi_Q(e_2)=\pi_2(d)$.
\item Conversely, consider a metric double vector bundle $(\mathbb
  E,Q,B,M)$ with core $Q^*$. Then the dual $D=\mathbb E\duer Q$ with
  sides $\pi_1\colon \mathbb E\duer Q\to Q$, $\pi_2\colon \mathbb
  E\duer Q\to Q^{**}\simeq Q$ and with core $B^*$ is an involutive
  double vector bundle with $\mathcal I\colon D\to D$ defined by
\[ \langle \mathcal I(d), e\rangle_Q=\nsp{e}{d}
\]
for $d\in D$ and $e\in\mathbb E\simeq \mathbb E\duer B$ with
$\pi_2(d)=\pi_Q(e)$. (Recall that the pairing $\nsp{\,}{}$ was defined
in \S\ref{fat_pairing_def}.)
\item The constructions in (1) and (2) are inverse to each other.
\end{enumerate}
\end{proposition}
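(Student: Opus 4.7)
The plan is to build on the dualisation theory of \S\ref{dual} and the canonical pairing $\nsp{\cdot}{\cdot}$ of \S\ref{fat_pairing_def}. In particular, for a double vector bundle $(D;Q,Q;M)$ with core $B^*$, the canonical (up to sign) isomorphism $(D\duer{\pi_1})\duer B\simeq D\duer{\pi_2}$ is the technical backbone.

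For part (1), the main step is the well-definedness of the proposed pairing on $\mathbb E\times_B\mathbb E$. Given two witnesses $d,\tilde d\in D$ with $\pi_1(d)=\pi_1(\tilde d)=\pi_Q(e_1)$ and $\pi_2(d)=\pi_2(\tilde d)=\pi_Q(e_2)$, the difference $\tilde d-_1 d$ lies in the subspace of $D$ with second projection $0^Q_m$, hence is of the form $0^D_{\pi_Q(e_1)}+_2\overline\beta$ for some $\beta\in B^*_m$. Since $\mathcal I$ is a double vector bundle morphism swapping the two side projections and restricting to $-\Id_{B^*}$ on the core, a direct computation using the interchange law \eqref{add_add} shows that $\mathcal I(\tilde d)-_1 \mathcal I(d)=0^D_{\pi_Q(e_2)}+_2\overline{-\beta}$. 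Evaluating the pairings against $e_1$ and $e_2$ yields
\[
\langle e_1,\tilde d-_1 d\rangle_Q+\langle e_2,\mathcal I(\tilde d)-_1 \mathcal I(d)\rangle_Q=\langle\pi_B(e_1),\beta\rangle-\langle\pi_B(e_2),\beta\rangle,
\]
which vanishes by the fibred product condition $\pi_B(e_1)=\pi_B(e_2)$. Symmetry is then obtained by taking $\tilde d=\mathcal I(d)$ as the witness when the arguments are swapped and invoking $\mathcal I^2=\Id$. The linearity condition \eqref{linearity_pairing} follows directly from the bilinearity of the $\pi_i$-dual pairings, with a common witness $d$ serving all four pairings. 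Non-degeneracy reduces to non-degeneracy of the $\pi_1$-dual pairing by probing an arbitrary $e_1$ with a suitable family of $e_2$'s constrained to match the $B$-projection.

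For part (2), the metric isomorphism $\Beta\colon\mathbb E\to\mathbb E\duer B$ composed with the canonical identification $\mathbb E\duer B\simeq D\duer{\pi_2}$ of \S\ref{fat_pairing_def} gives an isomorphism $\mathbb E\simeq D\duer{\pi_2}$. Since $\mathbb E=D\duer{\pi_1}$ as well, we have two presentations of $\mathbb E$ as a dual of $D$, and the transition map between the resulting identifications of $D\simeq(D\duer{\pi_1})\duer{\pi_1}$ with $(D\duer{\pi_2})\duer{\pi_2}$ is a double vector bundle isomorphism $\mathcal I\colon D\to D$ that swaps the two $\pi_i$-projections; the defining formula in (2) is precisely the unravelling of these identifications. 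Symmetry of the metric gives $\mathcal I^2=\Id_D$, and the core morphism $-\Id_{B^*}$ is read off from the ``$-$'' in the definition $\nsp{\Phi}{\Psi}=\langle\Phi,d\rangle_A-\langle\Psi,d\rangle_B$.

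Part (3) amounts to unravelling the canonical double-dual isomorphisms and checking they intertwine the two constructions. The main obstacle I expect is the careful bookkeeping of signs, in particular reconciling the sign in $\nsp{,}$ with the $-\Id_{B^*}$ core morphism consistently in both directions of the correspondence. To sidestep this bookkeeping, I would verify everything after choosing a Lagrangian splitting of $\mathbb E$ (whose existence is guaranteed by Proposition \ref{symmetrization}): under the duality \eqref{lemma_dual_splitting}, such a splitting corresponds to an ``involutive'' splitting of $D$ in Pradines' sense, and all claimed equalities then reduce to a direct linear-algebra check on the decomposed double vector bundle.
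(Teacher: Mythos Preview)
Your plan for part (1) is essentially identical to the paper's: the well-definedness computation via the core difference $\overline{\beta}$ and the use of $\mathcal I(d)$ as a witness for symmetry are exactly what the paper does. Your non-degeneracy sketch is vaguer than the paper's argument, which probes $e$ first with elements of the form $0^{\mathbb E}_{b_m}+_Q\overline{\tau}$ to force $\pi_Q(e)=0$, and then repeats to kill the remaining core part; your phrase ``reduces to non-degeneracy of the $\pi_1$-dual pairing'' hides this two-step structure, since the constraint $\pi_B(e_2)=\pi_B(e_1)$ prevents you from pairing $e_1$ against an arbitrary $d$ directly.

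For parts (2) and (3) you take a genuinely different route. The paper verifies everything by direct computation from the defining formulae: it checks that $\mathcal I$ is a double vector bundle morphism, computes the core morphism on elements $\overline{\beta}$, and proves $\mathcal I^2=\Id_D$ by a short manipulation using the symmetry of the metric and the definition of $\nsp{\cdot}{\cdot}$; part (3) is then two three-line computations showing the round-trips give back the original structures. Your approach instead interprets $\mathcal I$ as the transition map between the two presentations $\mathbb E\simeq D\duer{\pi_1}$ and $\mathbb E\overset{\Beta}{\simeq}\mathbb E\duer B\simeq D\duer{\pi_2}$, which is conceptually cleaner and makes $\mathcal I^2=\Id$ transparent from the symmetry of $\Beta$. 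One caution about your splitting-based shortcut for (3): the statement that the dual of a Lagrangian splitting of $\mathbb E$ is an involutive splitting of $D$ is established in the paper \emph{after} and \emph{using} this proposition, so invoking it here risks circularity; you would need to verify that fact independently in the decomposed model, which is doable but no shorter than the paper's direct argument.
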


\begin{proof}
 (1) We begin by proving that the pairing is well-defined. Choose
  $e_1,e_2\in\mathbb E\times_B\mathbb E$ and any $d\in D$ with
  $\pi_Q(e_1)=\pi_1(d)=q_1\in Q_m$ and $\pi_Q(e_2)=\pi_2(d)=q_2\in
  Q_m$. Then, for any $\beta\in B^*_m$, we have by\eqref{add_add}
  $d':=d+_{1}(0^D_{q_1}+_{2}\overline{\beta})=d+_{2}(0^D_{q_2}+_{1}\overline{\beta})$
  and thefore $\pi_Q(e_1)=q_1=\pi_1(d')\in Q_m$ and
  $\pi_Q(e_2)=q_2=\pi_2(d')\in Q_m$. Conversely, any $d'\in D$ with
  $\pi_Q(e_1)=\pi_1(d')\in Q_m$ and $\pi_Q(e_2)=\pi_2(d')\in Q_m$ can
  be obtained in this manner. We compute 
\begin{equation*}
\begin{split}
\langle e_1, d'\rangle_Q+\langle e_2,\mathcal
I(d')\rangle_Q
&=\langle e_1, d+_{1}(0^1_{q_1}+_{2}\overline{\beta})\rangle_Q+\langle e_2,\mathcal
I(d+_{1}(0^1_{q_1}+_{2}\overline{\beta}))\rangle_Q\\
&=\langle e_1, d+_{1}(0^1_{q_1}+_{2}\overline{\beta})\rangle_Q+\langle e_2,\mathcal
I(d)+_{2}(0^2_{q_1}+_{1}\overline{-\beta}))\rangle_Q\\
&=\langle e_1, d+_{1}(0^1_{q_1}+_{2}\overline{\beta})\rangle_Q+\langle e_2,\mathcal
I(d)+_{1}(0^1_{q_2}+_{2}\overline{-\beta}))\rangle_Q\\
&=\langle e_1, d\rangle_Q+\langle e_2,\mathcal
I(d)\rangle_Q+\langle \pi_B(e_1),\beta\rangle-\langle\pi_B(e_2),\beta\rangle\\
&=\langle e_1, d\rangle_Q+\langle e_2,\mathcal
I(d)\rangle_Q.
\end{split}
\end{equation*}

To check the symmetry of the pairing, recall that if $d$ is as above, then 
$\mathcal I(d)$ satisfies $\pi_1(\mathcal I(d))=q_2=\pi_Q(e_2)$ and 
$\pi_2(\mathcal I(d))=q_1=\pi_Q(e_1)$. Hence, we have by definition:
\[\langle e_2,e_1\rangle_{\mathbb E}=\langle e_2, \mathcal I(d)\rangle_Q+\langle e_1,\mathcal
I^2(d)\rangle_Q
=\langle e_2, \mathcal I(d)\rangle_Q+\langle e_1,d\rangle_Q
=\langle e_1, e_2\rangle_{\mathbb E}.
\]

Finally, consider $e\in \mathbb E$ with $\langle e,e'\rangle_{\mathbb
  E}=0$ for all $e'\in \mathbb E$ with $\pi_B(e)=\pi_B(e')=b_m$.
In particular, we find for all $\tau\in Q^*_m$: 
\[0=\langle e, 0^{\mathbb E}_{b_m}+_Q\overline{\tau}\rangle=\langle e, 0^1_{q}\rangle_Q+\langle 0^{\mathbb E}_{b_m}+_Q\overline{\tau}, \mathcal I(0^1_{q})\rangle_Q
=\langle e, 0^1_{q}\rangle_Q+\langle \tau^\dagger(b_m), 0^2_{q}\rangle_Q=\langle\tau, q\rangle.
\]
This shows that $q=\pi_1(e)\in Q_m$ must vanish, and so that
$e=0^{\mathbb E}_{b_m}+_Q\overline{\eta}$ for some $\eta\in Q^*_m$. In
the same manner as above, we find then that $\langle\eta, q'\rangle=0$
for all $q'\in Q_m$, and so that $\eta=0$. This shows that
$e=0^{\mathbb E}_{b_m}$.
The linearity of $\langle\cdot\,,\cdot\rangle_{\mathbb E}$ is
immediate and its proof is left to the reader.
\medskip

(2) A straightforward computation shows that both 
\[  \begin{xy}
\xymatrix{
D\ar[r]^{\mathcal I}\ar[d]_{\pi_1} &D\ar[d]^{\pi_2}\\
Q\ar[r]_{\Id_Q}&Q
}
\end{xy}\qquad \text{ and }\qquad \begin{xy}
\xymatrix{
D\ar[r]^{\mathcal I}\ar[d]_{\pi_2} &D\ar[d]^{\pi_1}\\
Q\ar[r]_{\Id_Q}&Q
}
\end{xy}
\] are morphisms of vector bundles. To find the core morphism of
$\mathcal I$, consider $\beta\in B^*_m$ and $0^{\mathbb E}_{b}+_Q\overline{\tau}$ for some 
$b\in B_m$ and $\tau\in Q^*_m$. 
Then 
\begin{equation*}
\begin{split}
  \langle \mathcal I(\overline{\beta}), 0^{\mathbb
    E}_{b}+_Q\overline{\tau}\rangle_Q&=\nsp{0^{\mathbb
      E}_{b}+_Q\overline{\tau}}{\overline{\beta}}=\langle
  0^{\mathbb E}_{b}+_Q\overline{\tau}, 0^{\mathbb
    E}_b\rangle_{\mathbb E}-\langle \overline{\beta},0^{\mathbb
    E}_b\rangle_Q\\
  &=-\langle\beta,b\rangle=\left\langle\overline{-\beta},  0^{\mathbb
    E}_{b}+_Q\overline{\tau}\right\rangle_Q.
\end{split}
\end{equation*}
Since any element of $\pi_Q^{-1}(0^Q_m)\subseteq \mathbb E$ can be
written $0^{\mathbb E}_{b}+_Q\overline{\tau}$ for some $b\in B_m$ and
$\tau\in Q^*_m$, we have proved that $\mathcal
I(\overline{\beta})=\overline{-\beta}$ for all $\beta\in B^*$. We prove that $\mathcal I^2=\Id_D$. Choose $d\in
D$ and $e\in \mathbb E$ with $\pi_Q(e)=\pi_1(d)=\pi_2(\mathcal I(d))$.
Then, by definition, with $e'\in\mathbb E$ such that
$\pi_B(e')=\pi_B(e)$ and $\pi_Q(e')=\pi_2(d)=\pi_1(\mathcal I(d))$:
\begin{equation*}
\begin{split}
  \langle \mathcal I^2(d), e\rangle_Q&=\nsp{e}{\mathcal I(d)}=\langle e,e'\rangle_{\mathbb E}-\langle \mathcal I(d), e'\rangle _Q\\
  &=\langle e,e'\rangle_{\mathbb E}-\nsp{e'}{d}=\langle e,e'\rangle_{\mathbb E}-\langle e',e\rangle_{\mathbb E}+\langle          d, e\rangle _Q\\
\end{split}
\end{equation*}

\medskip

(3) We start from an involutive double vector bundle $(D,Q,Q,M)$ with core
$B^*$ and involution $\mathcal I$. We build the dual
metric double vector bundle $(\mathbb E=D\duer{\pi_1},Q,Q,M)$ as in (i), and construct the dual
involutive double vector bundle as in (ii). Let $\mathcal I'$ be the
new involution obtained in this manner on $D$.
By definition, we have for $d\in D$ and $e\in\mathbb E$ with $\pi_Q(e)=\pi_2(d)$:
\[ \langle \mathcal I'(d), e\rangle_Q=\nsp{e}{d}=\langle
e,e'\rangle_{\mathbb E}-\langle d, e'\rangle_Q=\langle 
\mathcal I(d), e\rangle_Q+\langle \mathcal I^2(d), e'\rangle_Q -\langle d, e'\rangle_Q=\langle \mathcal I(d), e\rangle_Q
\]
for any $e'\in\mathbb E$ with $\pi_B(e')=\pi_B(e)$ and
$\pi_Q(e')=\pi_1(d)$. This shows $\mathcal I=\mathcal I'$.

Conversely, if we start with a metric double vector bundle $(\mathbb
E, Q,B,M)$ and take the dual involutive double vector bundle
$(D=\mathbb E\duer Q, Q,Q,M)$ with core $B^*$ and involution $\mathcal
I$, the involution defines a new metric $\langle\cdot\,,\cdot\rangle'$
on $\mathbb E$.
We have for all $(e_1,e_2)\in \mathbb E\times_B\mathbb E$:
\[\langle e_1,e_2\rangle'=\langle d,e_1\rangle_Q+\langle \mathcal
I(d),e_2\rangle_Q= \langle d,e_1\rangle_Q+\nsp{e_2}{d}
=\langle e_1, e_2\rangle_{\mathbb E}
\]
with $d$ any element of $D$ satisfying $\pi_1(d)=\pi_Q(e_1)$ and
$\pi_2(d)=\pi_Q(e_2)$. 
\end{proof}

Let $(D,Q,Q,M)$ be an involutive double vector bundle with involution
$\mathcal I$ and core $B^*$. 
Take a Lagrangian linear splitting $\Sigma\colon Q\times_MB\to \mathbb
E$ of the metric double vector bundle $\mathbb E=D\duer{\pi_1}$ and
the dual splitting $\Sigma^\star\colon Q\times _MQ\to D$ of
$D$. Consider $(q_1,q_2)\in Q\times_MQ$. Each element 
$e\in\mathbb E$ with $\pi_Q(e)=q_2$ and $\pi_B(e)=:b$
can be written $e=\Sigma(q_2,b)+_Q(0^{\mathbb E}_{q_2}+_B\overline{\tau})$ with 
a core element $\tau\in Q^*$ with $q_{Q^*}(\tau)=q_Q(q_2)=q_B(b)$:
\begin{equation*}
\begin{split}
 \langle \mathcal I(\Sigma^\star(q_1,q_2)),e\rangle_Q&=\nsp{e}{\Sigma^\star(q_1,q_2)}=\nsp{\Sigma(q_2,b)+_Q(0^{\mathbb E}_{q_2}+_B\overline{\tau})}{\Sigma^\star(q_1,q_2)}\\
&=\langle\Sigma(q_2,b)+_Q(0^{\mathbb E}_{q_2}+_B\overline{\tau}),
\Sigma(q_1,b)\rangle_{\mathbb E}-\langle\Sigma^\star(q_1,q_2), \Sigma(q_1,b)\rangle_Q\\
&=0+\langle \tau,q_1\rangle-0=\langle \tau,q_1\rangle.
\end{split}
\end{equation*}
Since also $\langle \Sigma^\star(q_2,q_1),e\rangle_Q=\langle \Sigma^\star(q_2,q_1),\Sigma(q_2,b)+_Q(0^{\mathbb E}_{q_2}+_B\overline{\tau})\rangle_Q=\langle \tau,q_1\rangle$,
we find that $\mathcal
I(\Sigma^\star(q_1,q_2))=\Sigma^\star(q_2,q_1)$.  We call such a
splitting an \textbf{involutive splitting} of $D$. 

Note that the existence of involutive splittings can also be proved
directly. Take an arbitrary splitting $\Sigma\colon Q\times_M Q\to D$
of $D$ and set $\Sigma'\colon Q\times_MQ\to
D$, \[\Sigma'(q_1,q_2):=\frac{1}{2}\cdot_1(\Sigma(q_1,q_2)+_1\mathcal
I(\Sigma(q_2,q_1)))=\frac{1}{2}\cdot_2(\Sigma(q_1,q_2)+_2\mathcal
I(\Sigma(q_2,q_1))).\] It is easy to check that $\Sigma'$ is a linear
splitting of $D$. The involutivity of $\Sigma'$ is immediate.

We leave to the reader the proof of the following lemma.
\begin{lemma}\label{inv_cois_sections}
  Let $(D;Q,Q;M)$ be an involutive double vector bundle and consider
  the dual metric double vector bundle
  $(\mathbb E={D}\duer{\pi_1}, B,Q,M)$. Then a section
  $\chi\in\Gamma_Q^l(\mathbb E)$ lies in $\mathcal C(\mathbb E)$ if
  and only if $\mathcal I^*(\ell_\chi)=-\ell_\chi$.

  Further, given $\tau\in\Gamma(Q^*)$, the morphism $\mathcal
  I^*\colon C^\infty(M)\to C^\infty(M)$ sends
  $\ell_{\tau^\dagger}=\pi_2^*\ell\tau$ to $\pi_1^*\ell_\tau$ and
  consequently $\pi_1^*\ell_\tau$ to $\ell_{\tau^\dagger}$. For $f\in
  C^\infty(M)$, $\mathcal I^*(\pi_1^*q_Q^*f)=\pi_1^*q_Q^*f$.
\end{lemma}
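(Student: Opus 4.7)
The plan is to unpack both assertions directly from the definition of the linear metric on $\mathbb{E}$ given in Proposition \ref{dual_inv_metric}(1). For the first equivalence, for $\chi\in\Gamma_Q^l(\mathbb{E})$ linear over some $b\in\Gamma(B)$ and any $d\in D$, I would compute
\[\mathcal{I}^*(\ell_\chi)(d)=\ell_\chi(\mathcal{I}(d))=\langle\chi(\pi_1(\mathcal{I}(d))),\mathcal{I}(d)\rangle_Q=\langle\chi(\pi_2(d)),\mathcal{I}(d)\rangle_Q,\]
using $\pi_1\circ\mathcal{I}=\pi_2$. The linearity of $\chi$ over $b$ ensures that both $\chi(\pi_1(d))$ and $\chi(\pi_2(d))$ project to the same point $b(m)\in B$, so the pair sits in $\mathbb{E}\times_B\mathbb{E}$. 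Taking $d$ itself as the distinguished element of $D$ in the formula of Proposition \ref{dual_inv_metric}(1) then gives
\[\langle\chi(\pi_1(d)),\chi(\pi_2(d))\rangle_{\mathbb{E}}=\ell_\chi(d)+\mathcal{I}^*(\ell_\chi)(d).\]
Thus $\mathcal{I}^*(\ell_\chi)=-\ell_\chi$ is equivalent to the vanishing of $\langle\chi(\pi_1(d)),\chi(\pi_2(d))\rangle_{\mathbb{E}}$ for every $d\in D$. Since any pair $(q_1,q_2)\in Q\times_MQ$ arises as $(\pi_1(d),\pi_2(d))$ for $d:=\Sigma^\star(q_1,q_2)$ (for any involutive splitting $\Sigma^\star$ of $D$), this is exactly the isotropy of the image of $\chi$, i.e.\ $\chi\in\mathcal{C}(\mathbb{E})$ by Proposition \ref{prop_CE1}.

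For the second statement, I would first verify the identity $\ell_{\tau^\dagger}=\pi_2^*\ell_\tau$ for $\tau\in\Gamma(Q^*)$. Choose an involutive splitting $\Sigma^\star\colon Q\times_MQ\to D$ and write $d=\Sigma^\star(\pi_1(d),\pi_2(d))+_1\bigl(0^D_{\pi_1(d)}+_2\overline{\beta_m}\bigr)$ for the unique $\beta_m\in B^*_m$ determined by $d$. Using the bilinearity of the duality pairing over $\pi_1$ and the identities \eqref{lemma_dual_splitting}, one obtains
\[\ell_{\tau^\dagger}(d)=\langle\tau^\dagger(\pi_1(d)),\Sigma^\star(\pi_1(d),\pi_2(d))\rangle_Q=\langle\pi_2(d),\tau(m)\rangle=\pi_2^*\ell_\tau(d),\]
the pairing of $\tau^\dagger(\pi_1(d))$ with the core-type summand vanishing because $\tau^\dagger$ has trivial $B$-component in the decomposition $\mathbb{E}\cong Q\times_MB\times_MQ^*$ induced by the dual Lagrangian splitting of $\mathbb{E}$.

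The three remaining identities then follow formally from $\pi_1\circ\mathcal{I}=\pi_2$, $\pi_2\circ\mathcal{I}=\pi_1$, $\mathcal{I}^2=\Id_D$, and the equality $q_Q\circ\pi_1=q_Q\circ\pi_2$ (both being the projection $D\to M$): $\mathcal{I}^*(\ell_{\tau^\dagger})=\mathcal{I}^*\pi_2^*\ell_\tau=\pi_1^*\ell_\tau$; applying $\mathcal{I}^*$ once more gives $\mathcal{I}^*(\pi_1^*\ell_\tau)=\pi_2^*\ell_\tau=\ell_{\tau^\dagger}$; and $\mathcal{I}^*(\pi_1^*q_Q^*f)=\pi_2^*q_Q^*f=\pi_1^*q_Q^*f$. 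The main obstacle is the bookkeeping around the duality between $\mathbb{E}$ and $D$ — in particular, keeping the core $Q^*$ of $\mathbb{E}$ and the core $B^*$ of $D$ clearly separate and handling the pairing between them — but once the base identity $\ell_{\tau^\dagger}=\pi_2^*\ell_\tau$ is in place, everything reduces to the formal interchange of $\pi_1$ and $\pi_2$ under $\mathcal{I}$.
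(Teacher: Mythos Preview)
The paper leaves the proof of this lemma to the reader, so there is no argument to compare against; your proposal is correct and is exactly the kind of verification the author has in mind. One small simplification for the second part: the identity $\ell_{\tau^\dagger}=\pi_2^*\ell_\tau$ follows in one line from the very definition of the projection $\pi_{C^*}$ in \S\ref{dual} (here $C=Q^*$, so $\pi_{C^*}=\pi_2$), namely $\langle\pi_2(d),\tau(m)\rangle=\langle d,\,0^{\mathbb E}_{\pi_1(d)}+_B\overline{\tau(m)}\rangle_Q=\langle d,\tau^\dagger(\pi_1(d))\rangle_Q=\ell_{\tau^\dagger}(d)$, which avoids the choice of splitting altogether.
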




We end this section with a few examples. 
\begin{example}\label{inv_dvb_metric_connections}
  Consider the metric double vector bundle $TE$ in Example
  \ref{metric_connections}.  The dual over $E$ is
  $(T^*E,E,E^*,M)$ with core $T^*M$. Since $E\simeq E^*$ via the
  metric, we find $(T^*E,E,E,M)$ with the involution $\mathcal I$
  sending $\dr_{e_1(m)}\ell_{e_2}+(T_{e_1(m)}q_E)^*(\theta_m)$ to
  $\dr_{e_2(m)}\ell_{e_1}-(T_{e_2(m)}q_E)^*(\theta_m+\dr\langle
  e_1,e_2)$ for $m\in M$, $e_1,e_2\in\Gamma(E)$ and $\theta_m\in
  T^*_mM$. Up to the identification of $E$ with $E^*$, the isomorphism
  $\mathcal I$ is the \emph{reversal isomorphism $T^*E\simeq T^*E^*$} in
  \cite{MaXu94}.
\end{example}

\begin{example}
  Consider the metric double vector bundle $TE\oplus_E T^*E$ in
  Example \ref{met_TET*E}.  It is a sub-vector bundle of $(TE\times_M T^*E)\to (TM\oplus
  E^*)$, which is the pullback under
  $\iota\colon TM\oplus E^*\to TM\times E^*$ of the vector bundle $TE\times
  T^*E\to TM\times E^*$.

 The dual $(TE)\duer{TM}$ is isomorphic to $TE^*$. The
  dual $(T^*E)\duer{E^*}$ is $TE^*$, modulo the reversal isomorphism
  \[-R\colon T^*E\to T^*E^*, \, -R(\dr_{e(m)}\ell_\varepsilon+_{E}(T_{e(m)}q_E)^*\theta_m)=-\dr_{\varepsilon(m)}\ell_e+_{E}(T_{\varepsilon(m)}q_{E^*})^*(\theta_m+\dr\langle e,\varepsilon\rangle)
\]
\cite{MaXu94}.  Therefore, the dual of $TE\times_MT^*E$ over $TM\oplus
E^*$ is, modulo the reversal isomorphism, $TE^*\times_M TE^*$ (the
pullback under $\iota$ of the vector bundle
$TE^*\times TE^*\to TM\times E^*$). Hence, the dual
of $TE\oplus T^*E$ over $TM\oplus E^*$ is the quotient
\[(TE\oplus T^*E)\duer{TM\oplus E^*}\simeq (TE^*\times_M TE^*)/(TE\oplus T^*E)^{\ann}.
\]
 Consider an arbitrary element
$(T_m\epsilon_1v_m+\eta_1^\uparrow(\epsilon_1(m)),T_m\epsilon_2w_m+\eta_2^\uparrow(\epsilon_2(m)))$
of $TE^*\times_M TE^*$, with
$\epsilon_1,\epsilon_2,\eta_1,\eta_2\in\Gamma(E^*)$, $v_m,w_m\in
T_mM$. Its pairing over
$(v_m,\epsilon_2(m))$ with an element
$(T_mev_m+(e')^\uparrow(e(m)),\dr_{e(m)}\ell_{\epsilon_2}+(T_{e(m)}q_E)^*\theta_m)$
of $TE\oplus T^*E$ over $e(m)\in E$ and $(v_m,\epsilon_2(m))\in
TM\oplus E^*$ is
\begin{equation*}\begin{split}
&\langle T_m\epsilon_1v_m+\eta_1^\uparrow(\epsilon_1(m)),
T_mev_m+(e')^\uparrow(e(m))\rangle\\
&+\langle
T_m\epsilon_2w_m+\eta_2^\uparrow(\epsilon_2(m)),
-\dr_{\epsilon_2(m)}\ell_e+(T_{\epsilon_2(m)}q_{E^*})^*(\theta_m+\dr\langle\epsilon_2,e\rangle),
\end{split}\end{equation*}
which is easily computed to be 
$v_m\langle \epsilon_1,e\rangle+\langle e, \eta_1-\eta_2\rangle(m)+\langle\theta_m, w_m\rangle+\langle e',\epsilon_1\rangle$.

In particular, we find that 
the fiber of $(TE\oplus T^*E)^{\ann}$ over $(v_m,\epsilon_m)\in TM\oplus E^*$ is 
\[\left\{\left.(T_m0^{E^*}v_m+\eta^\uparrow(0^{E^*}_m),\eta^\uparrow(\epsilon(m)))\right|
\eta_m\in E^*\right\},
\]
and that the core of $(TE\oplus T^*E)\duer{TM\oplus E^*}$ is identified as follows with $E^*$:
\[\eta_m\in E^* \mapsto \left(\left.\frac{d}{dt}\right\an{t=0}0^{E^*}_m+t\frac{\eta_m}{2},\left.\frac{d}{dt}\right\an{t=0}0^{E^*}_m-t\frac{\eta_m}{2}\right).
\]
 The sides of
$(TE\oplus T^*E)\duer{TM\oplus E^*}$ are $TM\oplus E^*$ and $E^*\oplus TM$.
The projection of the class in $(TE\oplus T^*E)\duer{TM\oplus E^*}$ of 
$(T_m\epsilon_1v_m+\eta_1^\uparrow(\epsilon_1(m)),T_m\epsilon_2w_m+\eta_2^\uparrow(\epsilon_2(m)))$ under
$\pi_1$ is $(v_m,\epsilon_2(m))$ and its
projection under $\pi_2$ is $(\epsilon_1(m),w_m)$.
A computation yields the equality of 
\[\mathcal
  I\left(T_m\epsilon_1(v_m)+\eta_1^\uparrow(\epsilon_1(m)),
  T_m\epsilon_2(w_m)+\eta_2^\uparrow(\epsilon_2(m))\right)\] with
\[\left(T_m\epsilon_2(w_m)+\eta_2^\uparrow(\epsilon_2(m)), T_m\epsilon_1(v_m)+\eta_1^\uparrow(\epsilon_1(m))\right).\]
The morphism $\mathcal I$ is therefore minus the identity on the core, and
it exchanges $\pi_1$ and $\pi_2$.

\end{example}

\subsubsection{The category of involutive double vector bundles}\label{morphisms_of_met_DVB}

\begin{definition}
A \textbf{morphism} 
$\Omega\colon D_1\to D_2$
of \textbf{involutive double vector bundles} is a morphism 
\begin{equation*}
  \begin{xy}
    \xymatrix{D_1\ar[rrr]^{\Omega}\ar[rd]^{\pi_1}\ar[dd]_{\pi_2}& && D_2\ar[rd]^{\pi_1}\ar[dd]
      &\\
& Q_1\ar[dd]\ar[rrr]^{\omega_Q}&&&Q_2\ar[dd]\\
      Q_1\ar[rd]\ar[rrr]&&&Q_2\ar[rd]&\\
&M_1\ar[rrr]^{\omega_0}&&&M_2}
\end{xy}
\end{equation*}
of double vector bundles such that
\[ \Omega\circ\mathcal I_1=\mathcal I_2\circ\Omega.
\]
We write $\operatorname{IDVB}$ for the obtained category of involutive
double vector bundles.
\end{definition} 

We call $\omega_{B^*}$ the induced morphism $B_1^*\to B_2^*$ on the
cores. We let the reader check that the morphism on the second side of
the diagram must coincide with $\omega_Q\colon Q_1\to Q_2$.


\begin{theorem}\label{met_maps}
  Let $(D_1;Q_1,Q_1;M_1)$ and $(D_2;Q_2,Q_2;M_2)$ be two involutive
  double vector bundles and consider the dual metric double vector
  bundles $(\mathbb E_1={D_1}\duer{\pi_1}, B_1,Q_1,M_1)$ and
  $(\mathbb E_2={D_2}\duer{\pi_1}, B_2,Q_2,M_2)$.  A morphism
  $\Omega\colon D_1\to D_2$ of involutive double vector bundles is
  equivalent to a pair of morphisms of modules
\begin{align*}
\omega^\star \colon \mathcal C(\mathbb E_2)\to \mathcal C(\mathbb E_1),\quad 
\omega_Q^\star \colon \Gamma(Q_2^*)\to \Gamma(Q_1^*)
\end{align*} over a smooth map
$\omega_0 \colon M_1\to M_2$, such that
$\omega^\star\left(\widetilde{\tau_1\wedge\tau_2}\right)=\widetilde{\omega_Q^\star\tau_1\wedge\omega_Q^\star\tau_2}
$
for all $\tau_1,\tau_2\in\Gamma(Q_2^*)$.
\end{theorem}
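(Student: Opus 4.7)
For the forward direction, given an involutive morphism $\Omega\colon D_1\to D_2$, both side maps coincide with a common vector bundle morphism $\omega_Q\colon Q_1\to Q_2$ over $\omega_0\colon M_1\to M_2$, from which Lemma~\ref{bundlemap_eq_to_morphism} produces $\omega_Q^\star\colon\Gamma(Q_2^*)\to\Gamma(Q_1^*)$ over $\omega_0^*$. Viewing $\Omega$ as a vector bundle morphism over $\omega_Q$ via $\pi_1$, the same lemma yields a module morphism $\Omega^\star\colon\Gamma(\mathbb E_2)\to\Gamma(\mathbb E_1)$ over $\omega_Q^*\colon C^\infty(Q_2)\to C^\infty(Q_1)$, and my candidate $\omega^\star$ is $\Omega^\star|_{\mathcal C(\mathbb E_2)}$. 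It lands in $\mathcal C(\mathbb E_1)$ because Lemma~\ref{inv_cois_sections}, the intertwining $\Omega\circ\mathcal I_1=\mathcal I_2\circ\Omega$, and \eqref{pullback_linear} together give $\mathcal I_1^*\ell_{\Omega^\star\chi}=\Omega^*\mathcal I_2^*\ell_\chi=-\ell_{\Omega^\star\chi}$ for every $\chi\in\mathcal C(\mathbb E_2)$; and it is over $\omega_0^*$ thanks to $\omega_Q^*\circ q_{Q_2}^*=q_{Q_1}^*\circ\omega_0^*$. The wedge compatibility then follows from the identity $\widetilde{\tau_1\otimes\tau_2}=\ell_{\tau_1}\cdot\tau_2^\dagger$ in $\Gamma(\mathbb E_2)$, the module property of $\Omega^\star$, the equality $\omega_Q^*\ell_\tau=\ell_{\omega_Q^\star\tau}$ from \eqref{pullback_linear}, and the fact that $\Omega^\star(\tau^\dagger)=(\omega_Q^\star\tau)^\dagger$.

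For the backward direction, given $(\omega^\star,\omega_Q^\star,\omega_0)$ satisfying the wedge condition, Lemma~\ref{bundlemap_eq_to_morphism} reconstructs $\omega_Q\colon Q_1\to Q_2$ from $\omega_Q^\star$, and I extend $\omega^\star$ to a module morphism $\Omega^\star\colon\Gamma(\mathbb E_2)\to\Gamma(\mathbb E_1)$ over $\omega_Q^*$ by setting $\Omega^\star(\tau^\dagger):=(\omega_Q^\star\tau)^\dagger$ on core sections and extending $C^\infty(Q_2)$-linearly. Fixing a Lagrangian splitting of $\mathbb E_2$ (Proposition~\ref{symmetrization}), Proposition~\ref{prop_CE2} together with the identity $\widetilde{\tau_1\odot\tau_2}=\tfrac{1}{2}(\ell_{\tau_1}\cdot\tau_2^\dagger+\ell_{\tau_2}\cdot\tau_1^\dagger)$ shows that $\mathcal C(\mathbb E_2)$ and the $C^\infty(Q_2)$-submodule generated by core sections jointly generate $\Gamma(\mathbb E_2)$. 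Lemma~\ref{bundlemap_eq_to_morphism} then produces $\Omega\colon D_1\to D_2$ as a vector bundle morphism over $\omega_Q$. Since $\Omega^\star$ preserves the linear/core decomposition by construction, $\Omega$ is a morphism of double vector bundles; its second side map is $\omega_Q$, and its core morphism $\omega_{B^*}\colon B_1^*\to B_2^*$ is obtained via \eqref{ses_isotropic} from the quotient map $\mathcal C(\mathbb E_2)/\Omega^2(Q_2)\simeq\Gamma(B_2)\to\Gamma(B_1)\simeq\mathcal C(\mathbb E_1)/\Omega^2(Q_1)$ by Lemma~\ref{bundlemap_eq_to_morphism}. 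Finally, $\Omega\circ\mathcal I_1=\mathcal I_2\circ\Omega$ is checked on the generators of $C^\infty(D_2)$ listed in Lemma~\ref{inv_cois_sections}, using that $\omega^\star$ takes values in $\mathcal C(\mathbb E_1)$.

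The main obstacle is well-definedness of the extension in the backward direction: the two generating classes $\mathcal C(\mathbb E_2)$ and the $C^\infty(Q_2)$-submodule generated by core sections both contain the antisymmetric core-linear sections $\widetilde{\tau_1\wedge\tau_2}=\tfrac{1}{2}(\ell_{\tau_1}\cdot\tau_2^\dagger-\ell_{\tau_2}\cdot\tau_1^\dagger)$, and the wedge compatibility condition is precisely what makes the two prescriptions for $\Omega^\star$ agree on this overlap. Once consistency is secured, the remaining checks -- that $\Omega$ respects the second vector bundle structure and that it intertwines the involutions -- reduce to straightforward verifications on generating classes of sections and functions, most transparently organized by fixing compatible Lagrangian splittings of $\mathbb E_1,\mathbb E_2$ (equivalently, involutive splittings of $D_1,D_2$).
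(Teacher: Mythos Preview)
Your proposal is correct and follows essentially the same approach as the paper: the forward direction is identical (use Lemma~\ref{inv_cois_sections} together with the intertwining $\Omega\circ\mathcal I_1=\mathcal I_2\circ\Omega$ to see that $\Omega^\star$ preserves $\mathcal C(\cdot)$, then verify the wedge condition via the identity $\widetilde{\tau_1\wedge\tau_2}=\tfrac{1}{2}(\ell_{\tau_1}\tau_2^\dagger-\ell_{\tau_2}\tau_1^\dagger)$), and your backward direction spells out in detail what the paper compresses into the single observation that $\mathcal C(\mathbb E)\cup\Gamma^c_Q(\mathbb E)$ spans $\mathbb E$ pointwise, so $\Omega$ is completely encoded by $(\omega^\star,\omega_Q^\star)$. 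Your explicit identification of the wedge condition as precisely the consistency constraint on the overlap $\widetilde{\tau_1\wedge\tau_2}$ is a welcome clarification that the paper leaves implicit.
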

Recall that this means in particular that
$\omega^\star(q_Q^*f\cdot\chi)=q_{Q_1}^*(\omega_0^*f)\cdot\omega^\star(\chi)$
and $\omega_Q^\star(f\cdot \tau)=\omega_0^*f\cdot \omega_Q^\star\tau$
for all $\tau\in\Gamma(Q_2^*)$, $f\in C^\infty(M_2)$ and $\chi \in
\mathcal C(\mathbb E_2)$.

\begin{proof}
Recall that the restriction of $\Omega^\star$ to core sections
$\tau^\dagger$, $\tau\in\Gamma(Q_2^*)$ is given by
$\Omega^\star(\tau^\dagger)=(\omega_Q^\star(\tau))^\dagger$ (see
\S\ref{usual_VB_morphisms}). 

Recall further that $\Omega^\star$ restricts to a morphism
$\Omega^\star\colon \Gamma_{Q_2}^l(\mathbb E_2)\to
\Gamma_{Q_1}^l(\mathbb E_1)$ of modules over $\omega_0^*\colon
C^\infty(M_2)\to C^\infty(M_1)$. Choose $\chi\in\mathcal C(\mathbb
E_2)$. Since by \eqref{pullback_linear}
\[ \mathcal I_1^*(\ell_{\Omega^\star\chi})=\ell_{\mathcal
  I_1^\star\Omega^\star\chi}=\ell_{\Omega^\star \mathcal
  I_2^\star\chi}
=\Omega^*\mathcal I_2^*\ell_\chi=\Omega^*(-\ell_\chi)=-\ell_{\Omega^\star\chi},
\]
we find that $\Omega^\star\chi\in\mathcal C(\mathbb E_1)$ by Lemma \ref{inv_cois_sections}. Therefore,
$\Omega^\star$ restricts to a morphism
$\omega^\star\colon\mathcal C(\mathbb E_2)\to \mathcal C(\mathbb E_1)$ of modules over $\omega_0^*\colon
C^\infty(M_2)\to C^\infty(M_1)$.
Next we choose $\tau_1,\tau_2\in\Gamma(Q_2^*)$. Then 
\[
\frac{1}{2}\Omega^\star\left(
\pi_1^*\ell_{\tau_1}\tau_2^\dagger-\pi_1^*\ell_{\tau_2}\tau_1^\dagger
 \right)=\frac{1}{2}\left(\pi_1^*\ell_{\omega_Q^\star\tau_1}(\omega_Q^\star\tau_2)^\dagger-\pi_1^*\ell_{\omega_Q^\star\tau_2}(\omega_Q^\star\tau_1)^\dagger
\right)
\]
shows that
$\Omega^\star\left(\widetilde{\tau_1\wedge\tau_2}\right)=\widetilde{\omega_Q^\star\tau_1\wedge
  \omega_Q^\star\tau_2}$.

Since $\mathcal C(\mathbb E)$ and $\Gamma^c_Q(\mathbb E)$ span
pointwise $\mathbb E=D\duer{\pi_1}$, we find that the morphism
$\Omega$ is completely encoded by the two maps $\omega^\star$ and $\omega_Q^\star$.
\end{proof}

\begin{remark}
 Recall that the morphism
\[\omega_{B^*}^\star\colon \Gamma(B_2)\to \Gamma(B_1)
\]
of modules over $\omega_0^*\colon C^\infty(M)\to C^\infty(N)$,
i.e.~the vector bundle morphism $\omega_{B^*}\colon B_1^*\to B_2^*$ is
induced as follows by the two maps in the theorem.  If
$\chi\in\Gamma_{Q_2}^l(\mathbb E_2)$ is linear over $b\in \Gamma(B_2)$, then
$\omega^\star(\chi)$ is linear over $\omega_{B^*}^\star(b)$. 
\medskip

Further, a morphism $\Omega\colon
  Q_1\times_{M_1}Q_1\times_{M_1}B_1^*\to
  Q_2\times_{M_2}Q_2\times_{M_2}B_2^* $ of decomposed metric double
  vector bundles is described by $\omega_Q\colon Q_1\to Q_2$,
  $\omega_{B^*}\colon B_1^*\to B_2^*$ and $\omega_{12}\colon Q_1\wedge
  Q_1\to B_2^*$, all morphisms of vector bundles over a smooth map
  $\omega_0\colon M_1\to M_2$:
\[\Omega(q,q',\beta)=(\omega_Q(q),\omega_Q(q'),\omega_{B^*}(\beta)+\omega_{12}(q,q')).
\]
For $b\in\Gamma(B_1)$ the isotropic section $b^l\in\Gamma_{Q_2}^l(
  B_2\times_{M_2}Q_2\times_{M_2}Q_2^*)$,
  $b^l(q_m)=(b(m),q_m,0_m^{Q_2^*})$, is sent by $\omega^\star$ to the
  isotropic section
  $(\omega_B^\star(b))^l+\widetilde{\omega_{12}^\star(b)}\in\Gamma_{Q_1}^l(B_1\times_{M_1}Q_1\times_{M_1}Q_1^*)$.

\end{remark}

\subsection{Equivalence of [2]-manifolds and involutive double vector
  bundles}\label{eq_2_manifolds}
In this section we describe the equivalence of the category of
involutive double vector bundles with the category of $[2]$-manifolds.


\subsubsection{The functor $\mathcal M(\cdot)\colon \operatorname{IDVB}\to \operatorname{[2]-Man}$.}
Let $(D, Q, Q, M)$ be an involutive double vector bundle with core
$B^*$ and consider the dual metric double vector bundle $(\mathbb
E=D\duer Q, Q, B,M)$. We construct a $[2]$-manifold by assigning the
degree $0$ to elements of $C^\infty(M)$, the degree $1$ to elements of
$\Gamma(Q^*)$ and the degree $2$ to elements of $\mathcal C(\mathbb
E)$.

\medskip

Recall from Corollary \ref{cor_fat_CE} the existence of the vector
bundle $\widehat{B}$ over $M$ with $\Gamma(\widehat{B})=\mathcal
C(\mathbb E)$.  We construct as follows the sheaf $C^\infty(\mathcal
M(D))^\bullet$ of $\N$-graded, graded commutative, associative, unital
$C^\infty(M)$-algebras.  For an arbitrary open set $U\subseteq M$ we
set $C^\infty_U(\mathcal M(D))^0:=C^\infty_U(M)$. For $k\geq 0$ we set
$C^\infty_U(\mathcal M(D))^{2k}=\Gamma_U(S^k\widehat B)$, that is, the
space of symmetric elements of $\Gamma(\otimes_k\widehat B)$, and we
set $C^\infty_U(\mathcal M(D))^{2k+1}=\Gamma_U(Q^*\otimes S^k\widehat
B)$.  In particular, we have defined $C^\infty_U(\mathcal M(D))^{2}$
to be $\Gamma(\widehat B)\simeq \mathcal C(\mathbb E)$ and
$C^\infty_U(\mathcal M(D))^{1}$ to be $\Gamma(Q^*)$.  For each
$i\in\mathbb N$, the sheaf $C^\infty_U(\mathcal M(D))^{i}$ is a sheaf
of $C^\infty(M)$-modules, so the multiplication of $f\in
C^\infty_U(M)$ with $\xi\in C^\infty_U(\mathcal M(D))^i$ is already
given. Note that elements $\frac{1}{k!}\sum_{\sigma\in
  S_{k}}\chi_{\sigma(1)}\otimes\ldots\otimes\chi_{\sigma(k)}=:\chi_1\cdot\ldots\cdot\chi_k$
with $\chi_1,\ldots,\chi_k\in \widehat{B}_p$ generate
$S^k\widehat{B}_p$ over a point $p\in M$.  The symmetric product
$(\cdot)\colon S^k\widehat{B}\otimes S^l\widehat{B}\to
S^{k+l}\widehat{B}$ sends generators
$\xi\otimes\eta=(\chi_1\cdot\ldots\cdot\chi_k)\otimes(\chi_{k+1}\cdot\ldots\cdot\chi_{k+l})$
to $\frac{1}{(k+l)!}\sum_{\sigma\in
  S_{k+l}}\chi_{\sigma(1)}\otimes\ldots\otimes\chi_{\sigma(k+l)}$, and
induces a product $(\cdot)\colon \Gamma(S^k\widehat{B})\otimes
\Gamma(S^l\widehat{B})\to \Gamma(S^{k+l}\widehat{B})$, wich gives us
the product $(\cdot)\colon C^\infty_U(\mathcal M(D))^{2k}\otimes
C^\infty_U(\mathcal M(D))^{2l}\to C^\infty_U(\mathcal M(D))^{2(k+l)}$
for $k,l\geq 0$. We further define the products
$(\cdot)\colon(Q^*\otimes S^k\widehat{B})\otimes S^l\widehat{B}\to
Q^*\otimes S^{k+l}\widehat{B}$ and $(\cdot)\colon S^k\widehat{B}\otimes
(Q^*\otimes S^l\widehat{B})\to Q^*\otimes S^{k+l}\widehat{B}$ by
\[(\tau\otimes\chi_1\cdot\ldots\cdot\chi_k)\cdot
(\chi_{k+1}\cdot\ldots\cdot\chi_{k+l})
=\tau\otimes(\chi_1\cdot\ldots\cdot\chi_{k+l})=(\chi_1\cdot\ldots\cdot\chi_k)\cdot
(\tau\otimes\chi_{k+1}\cdot\ldots\cdot\chi_{k+l}),\]
and the skew-symmetric product 
\[(\cdot) \colon(Q^*\otimes S^k\widehat{B})\otimes(Q^*\otimes S^l\widehat{B})\to S^{k+l+1}\widehat{B},\]
\[(\tau_1\otimes\chi_1\cdot\ldots\cdot\chi_k)\cdot (\tau_2\otimes \chi_{k+1}\cdot\ldots\cdot\chi_{k+l})
=\widetilde{\tau_1\wedge\tau_2}\cdot\chi_1\cdot\ldots\cdot\chi_{k+l}.\]
These pointwise
multiplications induce an associative,
graded-commutative multiplication
\[(\cdot)\colon C^\infty(\mathcal M(D))^{\bullet}\times
C^\infty(\mathcal M(D))^{\blacktriangle}\to C^\infty(\mathcal
M(D))^{\bullet+\blacktriangle}.
\]

Choose now $p\in M$ and a coordinate neighborhood $U\ni p$
such that $Q^*$ and $B$ are trivialised on $U$ by the basis frames
$(\tau_1,\ldots, \tau_{r_1})$ and $(b_1,\ldots,b_{r_2})$. Recall also that after the choice of
a Lagrangian splitting $\Sigma\colon Q\times_M B\to \mathbb E$, each
element $\chi\in\mathcal C(\mathbb E)$ over $b\in\Gamma(B)$ can be
written as $\sigma_B(b)+\widetilde{\phi}$ with
$\phi\in\Omega^2(Q)$.
Then $C^\infty_U(\mathcal M(D))$ is generated on $U$ as a
$C^\infty_U(M)$-algebra by
$\{\tau_1,\ldots,\tau_{r_1},\sigma_B(b_1),\ldots,\sigma_B(b_{r_2})\}$ (see
also the proof of Proposition \ref{prop_CE2}), where
$\sigma_B(b_1),\ldots,\sigma_B(b_{r_2})$ are considered as sections of
$\widehat{B}$. We obtain so the $[2]$-manifold $\mathcal M(D)$ of
dimension $(m;r_1,r_2)$, where $m$ is the dimension of $m$, $r_1$ is
the rank of $Q$ and $r_2$ is the rank of $B$.


  \bigskip We have constructed a map $\mathcal M(\cdot)$ sending
  involutive double vector bundles to $[2]$-manifolds.  By
  Theorem \ref{met_maps} a morphism
  $\Omega\colon D_1\to D_2$ of metric double
  vector bundles is the same as a triple of maps
  \[\omega_0\colon M_1\to M_2 \quad \Leftrightarrow \quad \omega_0^*\colon
  C^\infty(M_2)\to C^\infty(M_1),\]
\[\omega^\star\colon \mathcal C(\mathbb E_2)\to \mathcal C(\mathbb E_1) \quad 
\text{ and } \quad \omega_Q^\star\colon \Gamma(Q_2^*)\to \Gamma(Q_1^*)
\]
with
\[
\omega^\star\left(\widetilde{\tau_1\wedge\tau_2}\right)=\widetilde{\omega_Q^\star\tau_1\wedge\omega_Q^\star\tau_2},\qquad
q_{Q_1}^*\omega_0^*f\cdot
\omega^\star(\chi)=\omega^\star(q_{Q_2}^*f\cdot\chi)\] and
\[\omega_0^*f\cdot\omega_Q^\star(\tau)=\omega^\star(f\cdot\tau)
\]
for $f\in C^\infty(M_2)$, $\tau\in \Gamma(Q_2^*)$ and $\chi\in
\mathcal C(\mathbb E_2)$. Hence we find that the triple
$(\omega^\star,\omega_Q^\star,\omega_0^*)$ defines in this manner a
morphism $\mathcal M(\Omega)\colon \mathcal M(D_1)\to \mathcal M(D_2)$
of the $[2]$-manifolds constructed above.  \medskip

 We have so defined a covariant functor $\mathcal M(\cdot)\colon
 \operatorname{IDVB}\to \operatorname{[2]-Man}$ from the category of
 involutive double vector bundles to the category of $[2]$-manifolds.

 \subsubsection{The functor $\mathcal G\colon\operatorname{[2]-Man}
   \to \operatorname{IDVB}$.}\label{Geom_functor} (The letter $\mathcal G$ stands for
 \emph{geometrisation}.)  Conversely, we construct explicitly a metric
 double vector bundle associated to a given $[2]$-manifold $\mathcal
 M$. The idea is to adapt the construction of the equivalence of
 locally free and finitely generated sheaves of $C^\infty(M)$-modules
 with vector bundles over $M$ (see \S\ref{classical_eq}).

First we give Pradines'
original definition of a double vector bundle \cite{Pradines77} (in
the smooth and finite-dimensional case).
\begin{definition}\cite[C. \S 1]{Pradines77}\label{double_atlas}
  Let $M$ be a smooth manifold and $\mathbb E$ a topological space
  with a map $\Pi\colon \mathbb E\to M$. A \textbf{double vector
    bundle chart} is a quintuple $c=(U,\Theta,V_1,V_2,V_0)$, where $U$
  is an open set in $M$, $V_1,V_2,V_3$ are three (finite dimensional)
  vector spaces and $\Theta\colon \Pi\inv(U)\to U\times V_1\times
  V_2\times V_0$ is a homeomorphism such that $\Pi=\pr_1\circ\Theta$.

  Two smooth double vector bundle charts $c$ and $c'$ are
  \textbf{smoothly compatible} if the ``change of chart''
  $\Theta'\circ\Theta\inv$ over $U\cap U'$ has the following form:
\[(x,v_1,v_2,v_0)\mapsto (x,A_1(x)v_1,A_2(x)v_2,A_0(x)v_0+\omega(x)(v_1,v_2))
\]
with $x\in U\cap U'$, $v_i\in V_i$, $A_i\in C^\infty(M,
\operatorname{Gl}(V_i))$ for $i=0,1,2$ and $\omega\in
C^\infty(M,\operatorname{Hom}(V_1\otimes V_2,V_0))$.

A \textbf{smooth double vector bundle atlas} $\lie A$ on $\mathbb E$
is a set of double vector bundle charts of $\mathbb E$ that are
pairwise smoothly compatible and such that the set of underlying open
sets in $M$ is a covering of $M$.  As usual, $\mathbb E$ is then a
smooth manifold and two smooth double vector bundle atlases $\lie A_1$
and $\lie A_2$ are \textbf{equivalent} if their union is a smooth
atlas.  A (smooth) double vector bundle structure on $\mathbb E$ is an
equivalence class of smooth double vector bundle atlases on $\mathbb
E$.
\end{definition}

Given a $[2]$-manifold $\mathcal M$, we interpret its local generators
as smooth frames given by smoothly compatible double vector bundle
charts, and we show that the obtained smooth double vector bundle has a
natural linear metric.

Let $M$ be the smooth manifold underlying $\mathcal M$ and assume that
$\mathcal M$ has dimension $(l;m,n)$.  Choose a maximal open covering
$\{U_\alpha\}$ of $M$ such that $C^\infty_{U_\alpha}(\mathcal M)$ is
freely generated by $\xi_1^\alpha,\ldots,\xi_m^\alpha$ (in degree $1$)
and $\eta^\alpha_1,\ldots,\eta^\alpha_n$ (degree $2$ generators).
Smooth functions of degree $2$ on $U_\alpha$ are therefore
$C^\infty(U_\alpha)$-linear combinations of
$\eta^\alpha_1,\ldots,\eta^\alpha_n$ and of $\xi^\alpha_k\wedge
\xi^\alpha_l$ for $1\leq k<l\leq m$.  Therefore, if $\eta\in
C^\infty_{U_\alpha}(\mathcal M)$ has degree $2$, then $\eta$ is
represented in this basis by a pair $(v,G)$, with $v$ a vector $v\in
(C^\infty(U_\alpha))^n$ and $G$ a smooth, $(n\times n)$-skew-symmetric
matrix valued function on $U_\alpha$:
\[\eta=\sum_{i=1}^nv_i\eta_i^\alpha+\sum_{1\leq k<l\leq m}G_{kl}\xi^\alpha_k\wedge\xi^\alpha_l.
\]
Denote by $\Lambda(m,\R)$ the space of skew-symmetric, real $(m\times
m)$-matrices.

Choose now $\alpha,\beta$ such that $U_\alpha\cap U_\beta\neq
\emptyset$. Then each generator $\xi_i^\beta$ can be written in a
unique manner as $\sum_{j=1}^m\omega^{\alpha\beta}_{ji}\xi^\alpha_j$
with $\omega^{\alpha\beta}_{ji}\in C^\infty(U_\alpha\cap U_\beta)$. Define
$\omega^{\alpha\beta}\in C^\infty(U_\alpha\cap
U_\beta,\operatorname{Gl}(m,\R))$ by
$\omega^{\alpha\beta}_p:=\omega^{\alpha\beta}(p)=(\omega_{ij}^{\alpha\beta}(p))_{i,j}$
for all $p\in U_\alpha\cap U_\beta$.  Then, if a degree $1$ function
$\xi\in C^\infty_{U_\alpha\cap U_\beta}(\mathcal M)$ has the
coordinates $(f_1,\ldots, f_m)$, in the $C^\infty(U_\alpha\cap
U_\beta)$-basis $(\xi_1^\beta,\ldots,\xi_m^\beta)$, then it has
coordinates $\omega^{\alpha\beta}\cdot(f_1,\ldots, f_m)^t$ in the
$C^\infty(U_\alpha\cap U_\beta)$-basis
$(\xi_1^\alpha,\ldots,\xi_m^\alpha)$.

Each generator $\eta^\beta_i$ can be written as \[
\eta^\beta_i=\sum_{j=1}^n\psi_{ji}^{\alpha\beta}\eta^\alpha_j+\sum_{1\leq
  k<l\leq m}\rho_{kli}^{\alpha\beta}\,\xi^\alpha_k\wedge\xi^\alpha_l\]
with $\psi^{\alpha\beta}_{ji}, \rho^{\alpha\beta}_{kli}\in
C^\infty(U_\alpha\cap U_\beta)$.  Set
$\psi^{\alpha\beta}=(\psi_{ij}^{\alpha\beta})_{i,j}\in
C^\infty(U_\alpha\cap U_\beta,\operatorname{Gl}(n,\R))$. Define
$\rho^{\alpha\beta}\in C^\infty(U_\alpha\cap U_\beta,
\operatorname{Hom}(\R^n,\Lambda(m,\mathbb R)))$ by $\langle
e_l,\rho^{\alpha\beta}(p)(e_i)\cdot
e_k\rangle=\rho_{kli}^{\alpha\beta}(p)$ for $p\in U_\alpha\cap
U_\beta$, $1\leq k<l\leq m$ and $i=1,\ldots,n$.  Then if a degree $2$
function $\eta\in C^\infty_{U_\alpha\cap U_\beta}(\mathcal M)$ has the
coordinates $(v,G)$, in the $C^\infty(U_\alpha\cap U_\beta)$-basis
$(\eta_1^\beta,\ldots,\eta_m^\beta)\cup(\xi_k^\beta\wedge \xi_l^\beta
\mid 1\leq k<l\leq m)$, then it has coordinates
$(\psi^{\alpha\beta}\cdot v,
\rho^{\alpha\beta}(v)+\omega^{\alpha\beta}\cdot G\cdot
(\omega^{\alpha\beta})^t)$ in the $C^\infty(U_\alpha\cap
U_\beta)$-basis
$(\eta_1^\alpha,\ldots,\eta_m^\alpha)\cup(\xi_k^\alpha\wedge
\xi_l^\alpha \mid 1\leq k<l\leq m)$.

 Then
by construction
\begin{equation}\label{2_cocycle}
\begin{split}
  \omega^{\gamma\beta}_p&=\omega^{\gamma\alpha}_p\cdot
  \omega^{\alpha\beta}_p
  ,\qquad \psi^{\gamma\beta}_p=\psi^{\gamma\alpha}_p\cdot \psi^{\alpha\beta}_p\quad\text{ and }\\
  \rho^{\gamma\beta}_p(v)&=\rho^{\gamma\alpha}_p(\psi^{\alpha\beta}_p(v))+\omega^{\gamma\alpha}_p\cdot\rho^{\alpha\beta}_p(v)\cdot(\omega^{\gamma\alpha}_p)^t
\end{split}
\end{equation}
for all $p\in U_\alpha\cap U_\beta\cap U_\gamma$ and all $v\in \R^n$.
By construction we have also $\omega^{\alpha\alpha}=1$,
$\psi^{\alpha\alpha}=1$ and $\rho^{\alpha\alpha}=0$, which yields
$\rho^{\beta\alpha}_p(\psi^{\alpha\beta}_p(v))=-\omega^{\beta\alpha}_p\cdot\rho^{\alpha\beta}_p(v)\cdot(\omega^{\beta\alpha}_p)^t$.

Set $\tilde{\mathbb E}=\bigsqcup_\alpha
U_\alpha\times \R^m\times \R^n\times \R^m$ (the disjoint union)
and identify \[ (p,w,v,u)\in U_\beta\times\R^m\times \R^n\times \R^m \] with
\[\left(p, (\omega^{\beta\alpha}_p)^t w,\psi^{\alpha\beta}_p
  v,\omega^{\alpha\beta}_pu+\rho^{\alpha\beta}_p(v)((\omega^{\beta\alpha}_p)^tw)\right)\]
in $U_\alpha\times\R^m\times \R^n\times \R^m$ for $p\in U_\alpha\cap
U_\beta$. The cocycle equations \eqref{2_cocycle} imply that this
defines an equivalence relation on $\tilde{\mathbb E}$.  The quotient
space is $\mathbb E$, a double vector bundle: The map $\Pi\colon
\mathbb E\to M$, $[p,w,v,u]\mapsto p$ is well-defined and, by
construction, the charts
$c_\alpha=(U_\alpha,\Theta_\alpha=\Id,\R^m,\R^n,\R^m)$ define a smooth
double vector bundle atlas on $\mathbb E$ with smooth changes of
charts $\Theta_\alpha\circ\Theta_\beta\inv\colon (U_\alpha\cap
U_\beta)\times \R^m\times\R^n\times\R^m\to (U_\alpha\cap
U_\beta)\times \R^m\times\R^n\times\R^m$,
\[ (\Theta_\alpha\circ\Theta_\beta\inv)(p,w,v,u)=\left(p, (\omega^{\beta\alpha}_p)^t w,\psi^{\alpha\beta}_p
  v,\omega^{\alpha\beta}_pu+\rho^{\alpha\beta}_p(v)((\omega^{\beta\alpha}_p)^tw)\right).
\]
 Since the covering was
chosen to be maximal, the obtained double vector bundle $\mathbb E$ does
not depend on any choices.

Recall from the proof of Theorem \ref{split_N} that there are two
vector bundles $E_{1}$ and $E_{2}$ associated canonically to a
$[2]$-manifold $\mathcal M$ (only the inclusion of $\Gamma(E_{2})$
in $C^\infty(\mathcal M)^2$ is non-canonical). $E_{1}$ and $E_{2}^*$ are the
sides of $\mathbb E$ and $E_{1}^*$ is the core of $\mathbb E$.  The
vector bundle $E_{1}^*$ is defined by the cocycles
$\omega^{\alpha\beta}$; $E_1^*=\tilde E_1/\sim$ with $\tilde
E_{1}=\bigsqcup_{\alpha}U_\alpha\times \R^m$, and $(p,v)\sim
(x,\omega^{\alpha\beta}_p(v))$ for $p\in U_\alpha\cap U_\beta$. The
vector bundle $E_{2}$ can be defined in the same manner using
the cocycles $\psi^{\alpha\beta}$ and the model space $\R^n$.

Finally we check the existence of a canonical linear metric on
$\mathbb E$. Over a chart domain $U_\alpha$ we set
\[\langle (p, w,v,u), (p,w',v,u')\rangle=\langle u, w'\rangle+\langle
u', w\rangle,\]
where the pairing used on the right-hand side is just the standard
scalar product on $\R^m$.
By construction, this does not depend on the choice of $\alpha$ with
$p\in U_\alpha$, since 
\begin{equation*}
\begin{split}
&\left\langle \left(p, (\omega^{\beta\alpha}_p)^t w,\psi^{\alpha\beta}_p
  v,\omega^{\alpha\beta}_pu+\rho^{\alpha\beta}_p(v)((\omega^{\beta\alpha}_p)^tw)\right),
\left(p, (\omega^{\beta\alpha}_p)^t w',\psi^{\alpha\beta}_p
  v,\omega^{\alpha\beta}_pu'+\rho^{\alpha\beta}_p(v)((\omega^{\beta\alpha}_p)^tw')\right)
\right\rangle\\
&=\langle (\omega^{\beta\alpha}_p)^t w,\omega^{\alpha\beta}_pu'\rangle+\cancel{\rho^{\alpha\beta}_p(v)((\omega^{\beta\alpha}_p)^tw', (\omega^{\beta\alpha}_p)^t w)}
+\langle (\omega^{\beta\alpha}_p)^t w',\omega^{\alpha\beta}_pu\rangle+\cancel{\rho^{\alpha\beta}_p(v)((\omega^{\beta\alpha}_p)^tw, (\omega^{\beta\alpha}_p)^t w')}\\
&=\langle w,u'\rangle+\langle w',u\rangle.
\end{split}
\end{equation*}

By dualising $\mathbb E$ over $E_1$, we get an involutive double
vector bundle, which we call $\mathcal G(\mathcal M)$, with sides
$E_1$ and core $E_2$.  Again by
definition of the morphisms in the category of $[2]$-manifolds
(Definition \ref{n_man}) and in
the category of involutive double vector bundles (Theorem \ref{met_maps}), this defines a
functor $\mathcal G\colon \operatorname{[2]-Man}\to
\operatorname{IDVB}$ between the two categories.

\subsubsection{Equivalence of categories.}

Finally we need to prove that the two obtained functors define an
equivalence of categories.  The functor $\mathcal G\circ\mathcal
M(\cdot)$ is the functor that sends an involutive double vector bundle
to its maximal double vector bundle atlas, hence it is naturally isomorphic to the identity
functor.

The functor $\mathcal M(\cdot)\circ \mathcal G\colon
\operatorname{[2]-Man}\to\operatorname{[2]-Man}$ sends a
$[2]$-manifold $\mathcal M$ over $M$ with degree 1 local generators $\xi_\alpha^i$ and
cocycles $\omega^{\alpha\beta}$ and degree 2 generators $\eta_\alpha^i$ and cocycles
$\psi^{\alpha\beta}$ and $\rho^{\alpha\beta}$ on $U_\alpha$
to the sheaf of core and coisotropic linear sections of the dual of
$\mathcal G(\mathcal M)$  with degree 1 local generators $\xi_\alpha^i$ and
cocycles $\omega^{\alpha\beta}$ and degree 2 generators $\eta_\alpha^i$ and cocycles
$\psi^{\alpha\beta}$ and $\rho^{\alpha\beta}$ on $U_\alpha$. 
There is an obvious natural isomorphism between this functor and the
identity functor $\operatorname{[2]-Man}\to\operatorname{[2]-Man}$.

Hence we have the following result.
\begin{theorem}\label{main_crucial}
  The functors
  $\mathcal G\colon [2]\operatorname{-Man}\to \operatorname{IDVB}$ and
  $\mathcal M\colon \operatorname{IDVB}\to [2]\operatorname{-Man}$ and
  the two natural isomorphisms above are an equivalence between the
  category of involutive double vector bundles and the category of
  $[2]$-manifolds.
\end{theorem}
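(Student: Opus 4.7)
The plan is to verify that the two functors $\mathcal M(\cdot)$ and $\mathcal G(\cdot)$, together with the two natural transformations sketched in the paragraphs just above the theorem, actually form an equivalence. Since we already have candidate functors and candidate natural transformations, all that remains is to check that these transformations are well-defined, invertible, and natural.

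First I would make the natural isomorphism $\eta\colon \mathcal G\circ\mathcal M\Rightarrow \Id_{\operatorname{IDVB}}$ explicit. Given an involutive double vector bundle $D$ with core $B^*$ and dual metric double vector bundle $\mathbb E=D\duer{\pi_1}$, choose a maximal open covering $\{U_\alpha\}$ of $M$ that trivialises $Q^*$ and $B$ and carries a Lagrangian splitting of $\mathbb E\an{U_\alpha}$. The local frames of $Q^*$ and the Lagrangian horizontal lifts of local frames of $B$ provide local generators of $C^\infty_{U_\alpha}(\mathcal M(D))$ (via Proposition~\ref{prop_CE2}), and the associated transition matrices are exactly the cocycles $(\omega^{\alpha\beta},\psi^{\alpha\beta},\rho^{\alpha\beta})$ used in \S\ref{Geom_functor} to rebuild $\mathcal G(\mathcal M(D))$. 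Pradines' construction then yields a canonical isomorphism $\eta_D\colon \mathcal G(\mathcal M(D))\to D$ of double vector bundles; one checks from the construction of the metric in \S\ref{Geom_functor} that $\eta_D$ intertwines the linear metric on the dual of $\mathcal G(\mathcal M(D))$ with the metric on $\mathbb E$, and hence, via Proposition~\ref{dual_inv_metric}, intertwines the two involutions.

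Next I would make the natural isomorphism $\epsilon\colon \Id_{\operatorname{[2]-Man}}\Rightarrow \mathcal M\circ\mathcal G$ explicit. Given a $[2]$-manifold $\mathcal M$ with local degree-$1$ generators $\xi^\alpha_i$ and degree-$2$ generators $\eta^\alpha_j$, the vector bundles $E_1$ and $E_2^*$ produced in the proof of Theorem~\ref{split_N} are precisely the sides of $\mathcal G(\mathcal M)$, and under the trivialisations furnished by $\mathcal G$ the generators $\xi^\alpha_i$ and $\eta^\alpha_j$ correspond respectively to the local frames of $Q^*$ and to the Lagrangian horizontal lifts of local frames of $B$ in the dual metric double vector bundle. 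This identification extends uniquely to a map $\epsilon_{\mathcal M}^\star\colon C^\infty(\mathcal M(\mathcal G(\mathcal M)))\to C^\infty(\mathcal M)$ of sheaves of graded algebras because both sides are freely generated, on each chart $U_\alpha$, by the same symbols with the same transition cocycles \eqref{2_cocycle}. The map is a morphism of sheaves of graded algebras by construction, and it is invertible on generators, hence an isomorphism.

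Finally I would verify naturality. For a morphism $\Omega\colon D_1\to D_2$ in $\operatorname{IDVB}$, Theorem~\ref{met_maps} encodes $\Omega$ by the triple $(\omega_0^*,\omega_Q^\star,\omega^\star)$; both $\eta_{D_2}\circ \mathcal G(\mathcal M(\Omega))$ and $\Omega\circ\eta_{D_1}$ act in the same way on cores, on linear sections in $\mathcal C(\mathbb E_i)$, and on the base, and hence agree. Symmetrically, for $\mu\colon\mathcal M_1\to\mathcal M_2$ in $\operatorname{[2]-Man}$, both compositions $\epsilon_{\mathcal M_2}\circ\mu$ and $(\mathcal M\circ\mathcal G)(\mu)\circ\epsilon_{\mathcal M_1}$ act identically on the degree $0$, $1$, and $2$ generators, and so agree on all of $C^\infty(\mathcal M_2)$. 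The main subtlety, and the step I would treat most carefully, is the compatibility in degree $2$: one must confirm that the non-canonical choice involved in splitting $C^\infty(\mathcal M)^2$ into $\wedge^2 C^\infty(\mathcal M)^1\oplus\Gamma(E_2^*)$ matches, on every chart, the choice of Lagrangian lift used in $\mathcal M(\mathcal G(\mathcal M))$, and that the mixed skew-symmetric product $(Q^*\otimes S^k\widehat B)\otimes(Q^*\otimes S^l\widehat B)\to S^{k+l+1}\widehat B$ defined in \S\ref{eq_2_manifolds} indeed corresponds to the degree-$2$ multiplication in $C^\infty(\mathcal M)$; once this is checked on generators, the compatibility in all degrees follows from the Leibniz rule and graded commutativity.
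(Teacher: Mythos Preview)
Your proposal is correct and follows essentially the same approach as the paper: both arguments identify $\mathcal G\circ\mathcal M(D)$ with $D$ via its maximal double vector bundle atlas, and identify $\mathcal M\circ\mathcal G(\mathcal M)$ with $\mathcal M$ by matching local generators and their transition cocycles $(\omega^{\alpha\beta},\psi^{\alpha\beta},\rho^{\alpha\beta})$. The paper's treatment is a brief two-paragraph sketch that declares these natural isomorphisms ``obvious'', whereas you spell out the construction of $\eta_D$ and $\epsilon_{\mathcal M}$, verify that $\eta_D$ intertwines the involutions (via Proposition~\ref{dual_inv_metric}), and check naturality explicitly using Theorem~\ref{met_maps}; your extra care about the compatibility of the skew-symmetric product with the degree-$2$ multiplication is a genuine point the paper leaves implicit.
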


\begin{remark}\label{dcm}
  Note that del Carpio-Marek's defines \emph{self-conjugate double
    vector bundles} in his thesis \cite{delCarpio-Marek15}: those are
  double vector bundles $(D;Q,Q;M)$ with identical sides and a
  morphism $\mathcal H\colon D\to D$ of double vector bundles
  satisfying $\mathcal H^4=\Id_D$, $\pi_1\circ\mathcal H=-\pi_2$,
  $\pi_2\circ\mathcal H=\pi_1$, and restricting to the identity on the
  core. Morphisms of self-conjugate double vector bundles are defined
  like our morphisms of involutive double vector bundles.
  \cite{delCarpio-Marek15} shows that self-conjugate double vector bundles are
  dual to metric double vector bundles, and establishes an equivalence
  between their category and the category of $[2]$-manifolds.

In (2) of Proposition \ref{dual_inv_metric}, we work with the
  nondegenerate pairing $\nsp{\,}{}\colon\mathbb
  E\duer{B}\times_{Q}\mathbb E\duer{Q}$ to define the involution on
  the dual $E\duer{Q}$ of a metric double vector bundle $(\mathbb E;
  B,Q;M)$. The difference between our definition and del
  Carpio-Marek's is due to his choice of the
  nondegenerate pairing $\mathbb E\duer{Q}\times_{Q}\mathbb
  E\duer{B}$, which equals $\nsp{\,}{}$ up to a sign. Accordingly, in
  (1) of Proposition \ref{dual_inv_metric}, the metric on the dual
  $\mathbb E=\mathbb D\duer{\pi_1}$ of an involutive double vector
  bundle in the sense of \cite{delCarpio-Marek15} would be given by
  $\langle e_1,e_2\rangle_{\mathbb E}=\langle e_1, d\rangle_Q-\langle
  e_2,\mathcal H(d)\rangle_Q$ for $(e_1,e_2)\in\mathbb
  E\times_B\mathbb E$ and any $d\in D$ with $\pi_Q(e_1)=\pi_1(d)$ and
  $\pi_Q(e_2)=\pi_2(d)$.

  Del Carpio-Marek's analogue of Proposition \ref{dual_inv_metric} is
  phrased as follows: he shows that the defining map
  $\mathcal H$ of a self-conjugate double vector bundle $(D,Q,Q,M)$
  with core $B^*$ is equivalent to an isomorphism
  $D\duer{\pi_1}\simeq(D\duer{\pi_1})\duer{B}$, and so to a linear metric
  on $D\duer{\pi_1}$. This result is (up to a sign) equivalent to our
  Proposition \ref{dual_inv_metric}, and the two approaches are
  therefore very similar in nature (although developed
  independently). Del Carpio-Marek's equivalence of self-conjugate
  double vector bundles with $[2]$-manifolds is then based on an
  equivalence of self-conjugate double vector bundles with short exact
  sequences as in Lemma \ref{cor_fat_CE}, the duals of which belong to
  a family of short exact sequences that was reportedly proved by
  Bursztyn, Cattaneo, Mehta and Zambon\footnote{In an unpublished work
    in preparation.} to be equivalent to $[2]$-manifolds.
Up to the sign convention in the construction of the dual to a given
  metric double vector bundle, our Theorem \ref{main_crucial} and del
  Carpio-Marek's equivalence of categories work the same: the
  functions of a given
  $[2]$-manifold are interpreted in both methods as the
  special sections of two metric double vector bundles in the same
  isomorphy class.
\end{remark}

\subsubsection{Correspondence of splittings.}\label{cor_splittings}
Via the functors above, a decomposed involutive double vector bundles
$Q\times_MQ\times_MB^*$ is sent to a split $[2]$-manifold $Q[-1]\oplus
B^*[-2]$ and vice versa.

Choose an involutive double vector bundle $(D;Q,Q;M)$ with core $B^*$
and the corresponding $[2]$-manifold $\mathcal M$.  Each choice of an
involutive decomposition $\mathbb I$ of $D$ is equivalent to a
choice of splitting $\mathcal S$ of the corresponding $[2]$-manifold,
such that the following diagram commutes
\begin{equation*}
\begin{xy}
  \xymatrix{
    D\ar[r]^{\mathbb I\qquad }\ar[d]_{\mathcal M(\cdot)}
&Q\times_MQ\times_MB^*\ar[d]^{\mathcal M(\cdot)}\\
    \mathcal M(D)\ar[r]_{\mathcal S\qquad }&Q[-1]\oplus B^*[-2].  }
\end{xy}
\end{equation*}

Note also that the category of split $[2]$-manifolds is equivalent to
the category of $[2]$-manifolds, and the category of decomposed metric
double vector bundles is equivalent to the category of metric double
vector bundles.  We will use this in the following section.

\subsubsection{Geometric interpretation of the local generators $C^\infty(\mathcal M(D))$}\label{geom_int}
The equivalence of a vector bundle $E$ over a smooth manifold $M$ with
the $[1]$-manifold $E[-1]$ (see \S\ref{classical_eq}) is often
described as follows: \emph{the generators of $C^\infty(E[-1])$ are the
  linear functions on $E$}. 

In degree $2$, the $[2]$-manifold that corresponds to an involutive
double vector bundle $(D,Q,Q,M)$ with core $B^*$ and involution
$\mathcal I$ is generated by $\Gamma^c_Q(\mathbb E)\simeq \Gamma(Q^*)$
in degree $1$ and by $\mathcal C(\mathbb E)$ in degree $2$, where
$\mathbb E=D\duer{\pi_1}$. Recall that $\mathbb E$ is dual to
$\pi_1\colon D\to Q$ by construction, but also, since $\mathbb E\simeq
\mathbb E\duer B$, dual to $\pi_2\colon D\to Q$ via $\nsp{\cdot}{\cdot}\colon
\mathbb E\times_{Q,\pi_2} D\to \mathbb R$.

Since both $C^\infty(M)$-modules are contained in $\Gamma_Q(\mathbb
E)$, their elements can be understood in two manners as linear
functions on $\mathbb E$. The duality of $\mathbb E$ with $D$ over
$\pi_1$ sends $\tau^\dagger\in\Gamma_Q^c(\mathbb E)$ to
$\ell_{\tau^\dagger}=\pi_2^*\ell_\tau\in C^\infty(D)$ and
$\chi\in\mathcal C(\mathbb E)$ to $\ell_{\chi}\in C^\infty(D)$. The
duality of $\mathbb E$ with $D$ over $\pi_2$ sends
$\tau^\dagger\in\Gamma_Q^c(\mathbb E)$ to $\pi_1^*\ell_\tau=\mathcal
I^*(\ell_{\tau^\dagger})\in C^\infty(D)$ and $\chi\in\mathcal
C(\mathbb E)$ to $\mathcal I^*(\ell_{\chi})=-\ell_{\chi}\in
C^\infty(D)$. Define $\mathcal P(D)\subseteq C^\infty(D)$ to be
$C^\infty(M)$-module of functions that are affine linear in the fibers
of $\pi_1$ and of $\pi_2$ and on which $\mathcal I^*\colon
C^\infty(D)\to C^\infty(D)$ is just multiplication with $-1$.
 
A careful study of $\mathcal P(\mathbb E)$ shows that the morphism of
$C^\infty(M)$-modules $\psi\colon \Gamma^c_Q(\mathbb E)\oplus \mathcal
C(\mathbb E)\to\mathcal P(D)$ sending $\tau^\dagger\in
\Gamma^c_Q(\mathbb E)$ to
$\frac{1}{2}(\pi_2^*\ell_\tau-\pi_1^*\ell_\tau)=\frac{1}{2}(\ell_{\tau^\dagger}-\mathcal
I^*\ell_{\tau^\dagger})$ and $\chi\in\mathcal C(\mathbb E)$ to
$\ell_\chi=\frac{1}{2}(\ell_\chi-\mathcal I^*\ell_\chi)$ is an
isomorphism. Given a splitting $\Sigma\colon B\times_MQ\to\mathbb E$,
$\psi(\tau^\dagger)$ is the function that sends
$d=\Sigma^\star(q_1,q_2)+_1(0^1_{q_1}+_2\bar\beta)$ to
$\frac{1}{2}\langle\tau, q_2-q_1\rangle$, and
$\psi(\sigma_B(b)+\tilde{\omega})$ is the function that sends $d$ to
$\langle b,\beta\rangle+\omega(q_1,q_2)$. This shows that the elements
of $\mathcal P(D)$ are polynomial in the sides of $D$.

In this picture, the degrees assigned in a rather artificial manner to
the elements of $\Gamma^c_Q(\mathbb E)\oplus \mathcal C(\mathbb E)$
become more natural: the elements of $\Gamma^c_Q(\mathbb E)$
correspond via $\psi$ to functions on $D$ that are polynomial of
degree $1$ in the sides of $D$, and the elements of $\mathcal
C(\mathbb E)$ correspond via $\psi$ to functions on $D$ that are
polynomial of degree $2$ in the sides of $D$. 

Finally, for $\tau_1,\tau_2\in\Gamma(Q^*)$, the function
$\psi(\widetilde{\tau_1\wedge\tau_2})=\ell_{\widetilde{\tau_1\wedge\tau_2}}=\frac{1}{2}(\pi_1^*\ell_{\tau_1}\pi_2^*\ell_{\tau_2}-\pi_1^*\ell_{\tau_2}\pi_2^*\ell_{\tau_1})$
equals
$\frac{1}{2}\left((\pi_1^*\ell_{\tau_1}+\pi_2^*\ell_{\tau_1})\psi(\tau_2^\dagger)-(\pi_1^*\ell_{\tau_2}+\pi_2^*\ell_{\tau_2})\psi(\tau_1^\dagger)\right)$.

\section{Poisson $[2]$-manifolds, metric VB-algebroids and Poisson
  involutive double vector bundles.}\label{sec:Poisson}
In this section we study $[2]$-manifolds endowed with a Poisson
structure of degree $-2$. We show how split Poisson $[2]$-manifolds
are equivalent to a special family of 2-representations. Then we prove
that Poisson $[2]$-manifolds are equivalent to metric double vector
bundles endowed with a linear Lie algebroid structure that is
compatible with the metric, or equivalenty to involutive double vector
bundles with a linear Poisson structure that is $\mathcal I$-invariant.

\begin{definition}
  A Poisson $[2]$-manifold is a $[2]$-manifold endowed with a Poisson
  structure of degree $-2$. A morphism of Poisson $[2]$-manifolds 
is a morphism of $[2]$-manifolds that preserves the Poisson structure. 
\end{definition}

Note that a Poisson bracket of degree $-2$ on a $[2]$-manifold
$\mathcal M$ is an $\R$-bilinear map $\{\cdot\,,\cdot\}\colon
C^\infty(\mathcal M)\times C^\infty(\mathcal M)\to C^\infty(\mathcal
M)$ of the graded sheaves of functions, such that\linebreak $|\{\xi,\eta\}|=|\xi|+|\eta|-2$
for homogeneous elements $\xi,\eta\in C^\infty_{\mathcal M}(U)$. The
bracket is graded skew-symmetric;
$\{\xi,\eta\}=-(-1)^{|\xi|\,|\eta|}\{\eta,\xi\}$ and satisfies the
graded Leibniz and Jacobi identities
 \begin{equation}\label{graded_leibniz}
\{\xi_1,\xi_2\cdot\xi_3\}=\{\xi_1,\xi_2\}\cdot\xi_3+(-1)^{|\xi_1|\,|\xi_2|}\xi_2\cdot\{\xi_1,\xi_3\}
\end{equation}
and
\begin{equation}\label{graded_jacobi}
\{\xi_1,\{\xi_2,\xi_3\}\}=\{\{\xi_1,\xi_2\},\xi_3\}+(-1)^{|\xi_1|\,|\xi_2|}\{\xi_2,\{\xi_1,\xi_3\}\}
\end{equation}
for homogeneous $\xi_1,\xi_2,\xi_3\in C^\infty_{\mathcal M}(U)$.
A morphism $\mu\colon\mathcal N\to\mathcal M$ of Poisson
$[2]$-manifolds
satisfies $\mu^\star\{\xi_1,\xi_2\}=\{\mu^\star\xi_1,\mu^\star\xi_2\}$
for all $\xi_1,\xi_2\in C^\infty_{\mathcal M}(U)$, $U$ open in $M$.

\subsection{Split Poisson $[2]$-manifolds and self-dual $2$-representations}\label{sec:Poisson2man}
We begin by defining self-dual $2$-representations. Recall from
\S\ref{dual_and_ruths} the dual of a $2$-representation.

\begin{definition}\label{saruth}
  Let $(A,\rho,[\cdot\,,\cdot])$ be a Lie algebroid. A
  2-representation $(\nabla^Q,\nabla^{Q^*}, R)$ of $A$ on a complex
  $\partial_Q\colon Q^*\to Q$ is said to be \textbf{self-dual} if it
  equals its dual, i.e.~$\partial_Q=\partial_Q^*$,
the connections $\nabla^Q$ and $\nabla^{Q^*}$ are dual to each other,
and 
  $R^*=-R\in\Omega^2(A,\operatorname{Hom}(Q,Q^*))$.
\end{definition}

We prove the following result.
\begin{theorem}\label{poisson_is_saruth}
  There is a bijection between split Poisson $[2]$-manifolds and
   self-dual 2-representations.
 \end{theorem}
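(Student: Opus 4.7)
The plan is to unpack the data of a degree $-2$ Poisson bracket on a split $[2]$-manifold $\mathcal M = Q[-1]\oplus B^*[-2]$ in terms of bundle morphisms, connections and curvature on $Q,B$ over $M$, and then recognise these as the axioms of a self-dual $2$-representation. Recall that $C^\infty(\mathcal M)^0=C^\infty(M)$, $C^\infty(\mathcal M)^1=\Gamma(Q^*)$, and $C^\infty(\mathcal M)^2=\Gamma(B\oplus\wedge^2Q^*)$, so $\mathcal M$ is generated in positive degree by $\Gamma(Q^*)$ and $\Gamma(B)$. By degree considerations (the bracket vanishes on pairs whose total degree is less than $2$), the only non-trivial brackets among generators are $\{f,b\}$, $\{\xi_1,\xi_2\}$, $\{b,\xi\}$, and $\{b_1,b_2\}$, for $f\in C^\infty(M)$, $\xi,\xi_i\in\Gamma(Q^*)$ and $b,b_i\in\Gamma(B)$.

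Using the graded Leibniz rule \eqref{graded_leibniz} and graded skew-symmetry, I would encode these brackets as the following data: a vector bundle morphism $\rho\colon B\to TM$ defined by $\rho(b)(f):=\{b,f\}$; a symmetric $C^\infty(M)$-bilinear pairing $\Gamma(Q^*)\times\Gamma(Q^*)\to C^\infty(M)$, equivalently a self-dual morphism $\partial_Q\colon Q^*\to Q$ with $\{\xi_1,\xi_2\}=\langle\partial_Q\xi_1,\xi_2\rangle$; a $B$-connection $\nabla^{Q^*}$ on $Q^*$ given by $\nabla^{Q^*}_b\xi:=\{b,\xi\}$, whose dual I denote $\nabla^Q$; and a decomposition $\{b_1,b_2\}=[b_1,b_2]_B+R(b_1,b_2)$, where $[\cdot\,,\cdot]_B\colon\Gamma(B)\times\Gamma(B)\to\Gamma(B)$ is a skew $\R$-bilinear bracket satisfying a Leibniz rule controlled by $\rho$, and $R\in\Omega^2(B,\wedge^2Q^*)$ is tensorial. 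Viewing $R(b_1,b_2)\in\wedge^2Q^*$ as a skew map $Q\to Q^*$ yields $R^*=-R$ automatically; this already accounts for the self-duality of the curvature.

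Next I would impose the graded Jacobi identity \eqref{graded_jacobi} on triples of generators. The triple $(f,b_1,b_2)$ yields $\rho([b_1,b_2]_B)=[\rho(b_1),\rho(b_2)]$. The triple $(b,\xi_1,\xi_2)$ is equivalent to $\partial_Q\circ\nabla^{Q^*}_b=\nabla^Q_b\circ\partial_Q$. The triple $(b_1,b_2,\xi)$, after expanding $\{R(b_1,b_2),\xi\}$ by Leibniz to a contraction $\iota_{\partial_Q\xi}R(b_1,b_2)\in\Gamma(Q^*)$, yields the curvature identity $R_{\nabla^{Q^*}}=R\circ\partial_Q$; its partner $R_{\nabla^Q}=\partial_Q\circ R$ then follows automatically by dualisation from the self-duality of $\partial_Q$ and $R$. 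Finally the triple $(b_1,b_2,b_3)$, projected onto $\Gamma(B)$ and $\Gamma(\wedge^2Q^*)$ respectively, produces the Jacobi identity for $[\cdot\,,\cdot]_B$ (making $(B,\rho,[\cdot\,,\cdot]_B)$ a Lie algebroid) and the cocycle condition $\dr_{\nabla^{\rm Hom}}R=0$. Together these are precisely the axioms of a self-dual $2$-representation of $B$ on $\partial_Q\colon Q^*\to Q$ in the sense of Definition~\ref{saruth}.

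Conversely, starting from such a self-dual $2$-representation I would use the same formulas to define brackets on generators, and extend by the graded Leibniz rule to an $\R$-bilinear operation of degree $-2$ on $C^\infty(\mathcal M)$. The Jacobi identity on triples of generators is the content of the computations above read in reverse; by the Leibniz rule it propagates to the whole algebra, so the operation is Poisson. The two assignments are visibly mutually inverse. The main obstacle will be careful sign bookkeeping in the Jacobi identity, in particular for triples involving the odd-degree elements of $\Gamma(Q^*)$ and in the identification $\{R(b_1,b_2),\xi\}=\pm R(b_1,b_2)(\partial_Q\xi)$, but no deeper difficulty arises.
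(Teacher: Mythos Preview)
Your proposal is correct and follows essentially the same approach as the paper: extract $\rho_B$, $\partial_Q$, $\nabla$, $[\cdot\,,\cdot]$ and $R$ from the only degree-compatible brackets of generators, then read off the $2$-representation axioms from the graded Jacobi identity on triples of generators, and reverse the construction for the converse. The only cosmetic difference is the sign convention in the decomposition of $\{b_1,b_2\}$ (the paper writes $[b_1,b_2]-R(b_1,b_2)$), which, as you already anticipate, is absorbed in the sign bookkeeping.
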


\begin{proof}
  First let us consider a split 2-manifold $\mathcal M=Q[-1]\oplus
  B^*[-2]$. (For simplicity, we adopt the notation $Q=E_1$ and $B^*=E_2$
  for the bases of the metric double vector bundle
  geometrising $\mathcal M$.) That is, $Q$ and $B$ are vector bundles
  over $M$ and the functions of degree $0$ on $\mathcal M$ are the
  elements of $C^\infty(M)$, the functions of degree $1$ are sections
  of $Q^*$ and the functions of degree $2$ are sections of $B\oplus
  Q^*\wedge Q^*$.  Let us now take a Poisson bracket
  $\{\cdot\,,\cdot\}$ of degree $-2$ on $C^\infty(\mathcal M)$.  In
  the following, we consider arbitrary $f,f_1,f_2\in C^\infty(M)$,
  $\tau,\tau_1,\tau_2\in\Gamma(Q^*)$, and $b,b_1,b_2\in\Gamma(B)$.

The brackets $\{f_1,f_2\}$,
  $\{f,\tau\}$ have degree $-2$ and $-1$, respectively, and
  must hence vanish. The bracket $\{\tau_1,\tau_2\}$ is a function
  on $M$ because it has degree $0$.
  Since $\{f,\tau\}=0$ for all $f\in C^\infty(M)$ and
  $\tau\in\Gamma(Q^*)$, this defines a vector bundle morphism
  $\partial_Q\colon Q^*\to Q$ by \eqref{graded_leibniz}:
  $\langle\tau_2, \partial_Q(\tau_1)\rangle=\{\tau_1,\tau_2\}$.  Since
  $\{\tau_1,\tau_2\}=-(-1)^{|\tau_2|}\{\tau_2,\tau_1\}=\{\tau_2,\tau_1\}$,
  we find that $\partial_Q^*=\partial_Q$. The Poisson bracket
  $\{b,f\}$ has degree $0$ and is hence an element of
  $C^\infty(M)$. Again by \eqref{graded_leibniz}, this defines a
  derivation $\{b,\cdot\}\an{C^\infty(M)}$ of $C^\infty(M)$, hence a
  vector field $\rho_B(b)\in \mx(M)$; $\{b,f\}=\rho_B(b)(f)$. By the
  Leibnitz identity \eqref{graded_leibniz} for the Poisson bracket and
  the equality $\{f_1,f_2\}=0$ for all $f_1,f_2\in C^\infty(M)$, we
  get in this manner a vector bundle morphism (an anchor)
  $\rho_B\colon B\to TM$. The bracket $\{b,\tau\}$ has degree $1$ and
  is hence a section of $Q^*$.  Since
  $\{b,f\tau\}=f\{b,\tau\}+\{b,f\}\tau=f\{b,\tau\}+\rho_B(b)(f)\tau$
  and $\{f b,\tau\}=f\{b,\tau\}+\{f,\tau\}b=f\{b,\tau\}$, we find a
  linear $B$-connection $\nabla$ on $Q^*$ by setting
  $\nabla_b\tau=\{b,\tau\}$.  Let us finally look at the bracket
  $\{b_1,b_2\}$. This function has degree $2$ and is hence the sum of
  a section of $B$ and an element of $\Omega^2(Q)$.  We write
  $\{b_1,b_2\}=[b_1,b_2]-R(b_1,b_2)$ with $ [b_1,b_2]\in\Gamma(B)$ and
  $R(b_1,b_2)\in\Omega^2(Q)$. By a similar reasoning as
  before, we find that this defines a skew-symmetric bracket
  $[\cdot\,,\cdot]$ on $\Gamma(B)$ that satisfies a Leibniz equality
  with respect to $\rho_B$, and an element
  $R\in\Omega^2(B,\operatorname{Hom}(Q,Q^*))$ such that $R^*=-R$.
  Note also here that the bracket $\{b,\phi\}$ for
  $\phi\in\Omega^2(Q)\subseteq
  \Gamma(\operatorname{Hom}(Q,Q^*))$ is just
  $\nabla^{\operatorname{Hom}}_b\phi$, where
  $\nabla^{\operatorname{Hom}}$ is the $B$-connection induced on $
  \operatorname{Hom}(Q,Q^*)$ by $\nabla$ and $\nabla^*$.  \medskip

  Now we explain how the dull algebroid structure on $B$ is in
  reality a Lie algebroid structure, and that $(\nabla,\nabla^*,R)$ is
  a self-dual 2-representation of $B$ on $\partial_Q\colon Q\to Q^*$.
  In order to do this, we only need to recall that the Poisson
  structure $\{\cdot\,,\cdot\}$ satisfies the Jacobi identity.  The
  Jacobi identity for the three functions $b_1,b_2,f$ yields the
  compatibility of the anchor on $B$ with the bracket on
  $\Gamma(B)$. The Jacobi identity for $b,\tau_1,\tau_2$ yields
  $\partial_Q\circ \nabla=\nabla^*\circ\partial_Q$, and the Jacobi
  identity for $b_1,b_2,\tau$ yields $R_\nabla=R\circ\partial_Q$. The
  equality $R_{\nabla^*}=\partial_Q\circ R$ follows using
  $\partial_Q=\partial_Q^*$, $R^*=-R$ and $R_\nabla^*=-R_{\nabla^*}$.
  The Jacobi identity for $b_1,b_2,b_3\in\Gamma(B)$ yields in a
  straightforward manner the Jacobi identity for $[\cdot\,,\cdot]$ on
  sections of $\Gamma(B)$ and the equation
  $\dr_{\nabla^{\operatorname{Hom}}}R=0$.  \medskip

  Take conversely a self dual 2-representation of a Lie algebroid $B$
  on a 2-term complex $\partial_Q\colon Q^*\to Q$ and consider the
  $[2]$-manifold $\mathcal M=Q[-1]\oplus B^*[-2]$. Then the self-dual
  2-representation defines as described above a Poisson bracket of
  degree $-2$ on $C^\infty(\mathcal M)$.
\end{proof}

\subsection{(Split) symplectic $[2]$-manifolds}\label{symplectic}
Note that an ordinary Poisson manifold $(M,\{\cdot\,,\cdot\})$ is
symplectic if and only if the vector bundle morphism $\sharp\colon
T^*M\to TM$ defined by $\dr f\mapsto X_f$ is surjective, where
$X_f\in\mx(M)$ is the derivation $\{ f,\cdot\}$.  Alternatively, we
can say that the Poisson manifold is symplectic if the image of the
map $\sharp\colon C^\infty(M)\to \mx(M)$, $f\mapsto \{f,\cdot\}$
generates $\mx(M)$ as a $C^\infty(M)$-module.

In the same manner, if $(\mathcal M, \{\cdot\,,\cdot\})$ is a Poisson
$[n]$-manifold, the map \[\sharp\colon C^\infty(\mathcal M)\to
\der(C^\infty(\mathcal M))\] sends $\xi$ to $\{\cdot,\xi\}$. Then
$(\mathcal M, \{\cdot\,,\cdot\})$ is a \textbf{symplectic
  $[n]$-manifold} if the image of this map generates $\der(C^\infty(\mathcal M))$
as a $C^\infty(\mathcal M)$-module.

\medskip Let $(q_E\colon E\to M, \langle\cdot\,,\cdot\rangle)$ be a
metric vector bundle, i.e.~a vector bundle endowed with a
nondegenerate fiberwise pairing $\langle\cdot\,,\cdot\rangle\colon
E\times_M E\to \R$.  Choose a metric connection $\nabla\colon
\mx(M)\times\Gamma(E)\to\Gamma(E)$.  Then, identifying $E$ with $E^*$
via $\Beta\colon E\to E^*$ given by the metric, we find that the
$2$-representation $(\Id_E\colon E\to E, \nabla, \nabla, R_\nabla)$ is
self-dual (an easy calculation shows that if $\nabla$ is metric, then
$\langle R_\nabla(X_1,X_2)e_1,e_2\rangle=-\langle
R_\nabla(X_1,X_2)e_2,e_1\rangle$ for all $e_1,e_2\in\Gamma(E)$ and
$X_1,X_2\in\mx(M)$).  Consider the split Poisson $[2]$-manifold
$E[-1]\oplus T^*M[-2]$, with the Poisson bracket given by the
self-dual $2$-representation. That is,
the Poisson bracket is given by
\[ \{f_1,f_2\}=0,\quad \{f,e\}=0,\quad
\{e_1, e_2\}=\langle e_1, e_2\rangle,\]
\[\{X,e\}=\nabla_Xe, \quad \{X,f\}=X(f)\]
and 
\[\{X_1,X_2\}=[X_1,X_2]-R_\nabla(X_1,X_2).
\]

Recall from \eqref{der_deg_0} the special derivations that we found on
split $[n]$-manifolds.  The function $\sharp\colon
C^\infty(E[-1]\oplus T^*M[-2])\to
\operatorname{Der}(C^\infty(E[-1]\oplus T^*M[-2]))$ sends a function
$f$ of degree $0$ to $\hat{\dr f}$, a derivation of degree
$-2$. $\sharp$ sends $e$ to $\hat{e}+\dr_\nabla e$,
which is a derivation of degree $-1$. Note that locally,
$\dr_\nabla e\in\Omega^1(M,E)$ can be written as a sum $\sum_i
\nabla_{\partial_{x_i}}e\cdot \hat{\dr x_i}$.  Finally $\sharp$ sends
$X$ to $X+\nabla_X+[X,\cdot]-R(X,\cdot)$,
which is a derivation of degree $0$.  Note that $R(X,\cdot)$ can be
written as $\sum f_{ijk}e_ie_j\hat{\dr x_k}$ 
for some
basis sections $e_1,\ldots,e_n\in\Gamma(E)$ and some functions $f_{ijk}$ in
$C^\infty(M)$.  Hence, since the derivations $\hat{\dr f}$, $\hat e$
and $X+\Beta\circ\nabla_X\circ\Beta\inv+[X,\cdot]$ for $f\in
C^\infty(M)$, $e\in\Gamma(E)$ and $X\in\mx(M)$, span
$\operatorname{Der}(C^\infty(E[-1]\oplus T^*M[-2]))$ as a
$C^\infty(E[-1]\oplus T^*M[-2])$-module, we find as a consequence that
$E[-1]\oplus T^*M[-2]$ is a symplectic $[2]$-manifold.

\medskip 

Conversely, take a split Poisson $[2]$-manifold $Q[-1]\oplus
B^*[-2]$, hence a self-dual $2$-re\-pre\-sen\-ta\-tion $(\partial_Q\colon
Q^*\to Q, \nabla, \nabla^*,R)$ of a Lie algebroid $B$. Then $\sharp
f=\rho_B^*\dr f$ for all $f\in C^\infty(M)$,
$\sharp\tau=\hat{\partial_Q\tau}-\dr_{\nabla^*}\tau$ and $\sharp
b=\rho_B(b)+\nabla_b^*+[b,\cdot]-R(b,\cdot)$. A discussion as the one
above shows that the Poisson structure is symplectic if and only if
$\rho_B\colon B\to TM$ is injective and surjective, hence an
isomorphism and $\partial_Q\colon Q^*\to Q$ is surjective, hence an
isomorphism. The isomorphism $\partial_Q$ identifies then $Q$ with its
dual and $Q$ becomes so a metric vector bundle with the pairing
$\langle q_1, q_2\rangle_Q=\langle \partial_Q\inv(q_1),
q_2\rangle=\{\partial_Q\inv q_1,\partial_Q\inv q_2\}$. Via the
identification $\Beta\inv=\partial_Q\colon Q^*\overset{\sim}{\to} Q$, the linear
connection $\nabla$ is then automatically a metric connection and the
self-dual $2$-representation is $(\Id_Q\colon Q\to
Q,\nabla,\nabla,R_\nabla)$.

\medskip

We have hence found that split symplectic $[2]$-manifolds are
equivalent to self-dual $2$-representations $(\Id_E\colon E\to E,
\nabla, \nabla, R_\nabla)$ defined by a metric vector bundle $E$
together with a metric connection $\nabla$, see also
\cite{Roytenberg02}.

\subsection{Metric VB-algebroids and Poisson involutive double vector bundles}\label{sec:metVBa}
Next we introduce the notions of metric VB-algebroids, involutive
Poisson double vector bundles, and their morphisms.
\begin{definition}\label{metric_VBLA}
 \begin{enumerate}
\item  Let $(\mathbb E;Q,B;M)$ be a metric double vector bundle (with core $Q^*$)
  and assume that $(\mathbb E\to Q, B\to M)$ is a VB-algebroid. Then
  $(\mathbb E\to Q, B\to M)$ is a \textbf{metric VB-algebroid} if the isomorphism
  $\Beta\colon \mathbb E\to \mathbb E\duer B$ is an isomorphism of VB-algebroids.
\item Let $(D,Q,Q,M)$ be an involutive double vector bundle with core
  $B^*$, and let
  $\{\cdot\,,\cdot\}\colon C^\infty(D)\times C^\infty(D)\to
  C^\infty(D)$
  be a Poisson structure on $D$. Then $(D,\{\cdot\,,\cdot\})$ is an
  \textbf{involutive Poisson double vector bundle} if the Poisson
  structure is linear over both sides of $D$ and if $\mathcal I$ is an
  anti-Poisson morphism:
  $\left\{ \mathcal I^*F, \mathcal I^*F'\right\}=-\mathcal I^*\{F,F'\}$ for
    all $F,F'\in C^\infty(D)$.
\item
  A morphism $\Omega\colon D_1\to D_2$ of Poisson involutive double
  vector bundles is a morphism of the underlying involutive double vector
  bundles that is a Poisson map:
$\left\{ \Omega^*F, \Omega^*F'\right\}=\Omega^*\{F,F'\}$
for all $F,F'\in C^\infty(D)$.
\end{enumerate}
\end{definition}
\begin{remark}
  Note that in his thesis \cite{delCarpio-Marek15}, del Carpio-Marek
  defines a Poisson self-conjugate double vector bundle as a
  self-conjugate double vector bundle (see Remark \ref{dcm}) endowed
  with a Poisson structure such that the self-conjugation $\mathcal H$
  is a Poisson morphism. This is, again up to a sign, the same setting
  as the one of Poisson involutive double vector bundles.  Del
  Carpio-Marek recovers independently our results Corollary
  \ref{duality_poissoninv_metricdvb} and Theorem \ref{eq_poisson}
  below.
\end{remark}

Consider a linear VB-algebroid structure $(\mathbb E\to Q, [\cdot\,,\cdot],\Theta\colon
\mathbb E\to TQ)$ on a metric double
vector bundle $(\mathbb E;Q,B;M)$. The linear Lie algebroid structure
defines a linear Poisson structure on $D=\mathbb E\duer Q$:
\begin{equation*}
\begin{split}
\{\ell_{\chi_1},\ell_{\chi_2}\}=\ell_{[\chi_1,\chi_2]}, \quad 
\{\ell_{\chi}, \pi_1^*F\}= \Theta(\chi)(F), \quad \{F_1,F_2\}=0
\end{split}
\end{equation*}
for $\chi,\chi_1,\chi_2\in \Gamma_Q(\mathbb E)$ and
$F,F_1,F_2\in C^\infty(Q)$. This Poisson structure is automatically
also linear over the other side $\pi_2\colon D\to Q$
\cite{Mackenzie11}. 
We will prove below that it is involutive if and
only if the corresponding Lie algebroid structure was metric.
\bigskip

Recall from Theorem \ref{rajan} that linear splittings of
VB-algebroids define $2$-re\-pre\-sen\-ta\-tions.  First we prove that
Lagrangian splittings of metric VB-algebroids correspond to self-dual
$2$-re\-pre\-sen\-ta\-tions.

\begin{proposition}\label{metric_VBLA_via_sa_ruth}
  Let $(\mathbb E\to Q, B\to M)$ be a VB-algebroid with core $Q^*$ and
  assume that $\mathbb E$ is endowed with a linear metric.  Choose a
  Lagrangian splitting of $\mathbb E$ and consider the corresponding
  2-representation of $B$ on $\partial_Q\colon Q^*\to Q$. This
  2-representation is self-dual if and only if $(\mathbb E\to Q, B\to
  M)$ is a metric VB-algebroid.
\end{proposition}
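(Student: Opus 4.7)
My plan is to compare two naturally defined VB-algebroid structures on $\mathbb{E}\duer B \to Q$: first, the one obtained by transporting the given Lie algebroid structure on $(\mathbb{E}\to Q, B\to M)$ via the metric isomorphism $\Beta\colon \mathbb{E}\to \mathbb{E}\duer B$; second, the canonical dual VB-algebroid structure on $(\mathbb{E}\duer B\to Q, B\to M)$ recalled in \S\ref{dual_and_ruths}. By Definition \ref{metric_VBLA}, the VB-algebroid $(\mathbb{E}\to Q,B\to M)$ is metric if and only if $\Beta$ is a VB-algebroid isomorphism, which holds exactly when these two structures coincide. I will encode both via Theorem \ref{rajan}, using the fixed Lagrangian splitting $\Sigma$ of $\mathbb{E}$ and its dual splitting $\Sigma^\star$ of $\mathbb{E}\duer B$.

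First I would check that the core morphism of $\Beta$ is $\mathrm{Id}_{Q^*}$: this follows by unwinding the definition of $\Beta$ on core elements together with property (2) of a linear metric (pairing of a linear and a core section). Combined with Lemma \ref{lem_Lagr_split_sections}, which says that $\Beta$ sends the Lagrangian lift $\sigma_B(b)\in\Gamma_Q^l(\mathbb{E})$ to the dual-splitting lift $\sigma_B^\star(b)\in\Gamma_Q^l(\mathbb{E}\duer B)$, this shows that $\Beta$ identifies Lagrangian lifts with the distinguished generators of $\Gamma_Q^l(\mathbb{E}\duer B)$ coming from $\Sigma^\star$, and identifies core sections with core sections. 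Consequently, the Lie algebroid structure transported from $\mathbb{E}\to Q$ onto $\mathbb{E}\duer B \to Q$ via $\Beta$ is encoded in $\Sigma^\star$ by precisely the same 2-representation $(\nabla^Q,\nabla^{Q^*},R)$ of $B$ on the core-anchor complex $\partial_Q\colon Q^*\to Q$ as the one attached to $\Sigma$ by Theorem \ref{rajan}.

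Next I would recall from \S\ref{dual_and_ruths} that the canonical dual VB-algebroid $(\mathbb{E}\duer B\to Q,B\to M)$ is encoded in the dual splitting $\Sigma^\star$ by the 2-representation $\bigl((\nabla^{Q^*})^*,(\nabla^Q)^*,-R^*\bigr)$ of $B$ on the dual core-anchor complex $\partial_Q^*\colon Q^*\to Q$. Since Theorem \ref{rajan} provides a bijective correspondence between linear Lie algebroid structures on $\mathbb{E}\duer B \to Q$ linear over $B$ and (core-anchor, 2-representation) pairs in the splitting $\Sigma^\star$, the two VB-algebroid structures agree if and only if
\[
\partial_Q=\partial_Q^*, \qquad \nabla^Q=(\nabla^{Q^*})^*, \qquad R=-R^*,
\]
which, by Definition \ref{saruth}, is exactly self-duality of $(\nabla^Q,\nabla^{Q^*},R)$.

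The main obstacle I anticipate is careful bookkeeping along $\Beta$ of the side and core identifications: one must verify that the side map of $\Beta$ on $B$ is the identity and that its side map on $Q$ is compatible with the dualisation $\Sigma\mapsto\Sigma^\star$, so that the transported 2-representation is literally $(\nabla^Q,\nabla^{Q^*},R)$ rather than some $\phi$-twist in the sense of Remark \ref{change}. Once this is set up, the proof reduces to matching the two 2-representations term by term.
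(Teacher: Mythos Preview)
Your proposal is correct and follows essentially the same route as the paper's proof: you use that $\Beta$ sends core sections to core sections and (by Lemma \ref{lem_Lagr_split_sections}) sends $\sigma_B(b)$ to $\sigma_B^\star(b)$, then compare the transported 2-representation with the dual 2-representation from \S\ref{dual_and_ruths}, concluding via Theorem \ref{rajan} that they agree precisely when the 2-representation is self-dual. The paper does exactly this, writing out the dual structure on $\mathbb E\duer B$ explicitly and matching term by term; your anticipated ``bookkeeping obstacle'' about side and core identifications is handled there simply by invoking the canonical isomorphism $Q\simeq Q^{**}$.
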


\begin{proof}
  It is easy to see that $\Beta\colon \mathbb E\to \mathbb E\duer B$
  sends core sections $\tau^\dagger\in\Gamma^c_Q(\mathbb E)$ to core
  sections $\tau^\dagger\in\Gamma^c_Q(\mathbb E\duer B)$.  (As always,
  we identify $Q^{**}$ with $Q$ via the canonical isomorphism.)  Let
  $\Sigma\colon B\times_MQ\to \mathbb E$ be a Lagrangian splitting of
  $\mathbb E$.  We have seen in Section \ref{dual} that the map
  $\sigma_B\colon \Gamma(B)\to\Gamma_Q^l(\mathbb E)$ induces a
  horizontal lift $\sigma_B^\star\colon \Gamma(B)\to\Gamma_Q^l(\mathbb
  E\duer B)$.  Recall from Lemma \ref{lem_Lagr_split_sections} that
  $\Beta$ sends then also the linear sections $\sigma_B(b)$ to
  $\sigma_B^\star(b)$, for all $b\in\Gamma(B)$.

\medskip

The double vector bundle $\mathbb E\duer B$ has a VB-algebroid
structure $(\mathbb E\duer B\to Q^{**}, B\to M)$ (see
\S\ref{subsect:VBa}).  Given the splitting $\Sigma^\star\colon
B\times_M Q^{**}\to \mathbb E\duer B$ defined by a Lagrangian
splitting $\Sigma\colon B\times_M Q\to \mathbb E$, the VB-algebroid
structure is given by the dual of the $2$-representation
$(\partial_Q\colon Q^*\to Q, \nabla^Q, \nabla^{Q^*},
R\in\Omega^2(B,\Hom(Q,Q^*)))$, i.e.
\begin{equation*}
\begin{split}
  \rho_{\mathbb E\duer B}(\tau^\dagger)&=(\partial_Q^*\tau)^\uparrow \in \mx^c(Q^{**}),\qquad \rho_{\mathbb E\duer B}(\sigma_B^\star(b))=\widehat{{\nabla^{Q^*}}^*}\in \mx^l(Q^{**}),\\
  \left[\sigma_B^\star(b), \tau^\dagger\right]&=({\nabla^Q}^*_b\tau)^\dagger, \quad \text{ and }\quad \left[\sigma_B^\star(b_1),
    \sigma_B^\star(b_2)\right]=\sigma_B^\star[b_1,b_2]+\widetilde{R(b_1,b_2)^*}
\end{split}
\end{equation*}
(see \S\ref{dual_and_ruths}).
This shows immediately that $\Beta$ is an isomorphism of VB-algebroids
over the canonical isomorphism $Q\to Q^{**}$ if and only if the $2$-representation
\[(\partial_Q\colon Q^*\to Q, \nabla^Q, \nabla^{Q^*},
R\in\Omega^2(B,\Hom(Q,Q^*)))\] is self-dual.
\end{proof}

\begin{corollary}\label{cor_1_met}
  Consider a metric double vector bundle $(\mathbb E, B,Q,M)$ endowed
  with a linear Lie algebroid structure on $\mathbb E\to Q$ over $B\to
  M$. Then the Lie algebroid structure is metric if and only if
  $\mathcal C(\mathbb E)$ is closed under the Lie bracket.
\end{corollary}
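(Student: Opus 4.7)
The plan is to fix a Lagrangian linear splitting $\Sigma\colon Q\times_M B\to\mathbb E$ of the metric double vector bundle (it exists by Proposition~\ref{symmetrization}) and work throughout with the induced $2$-representation $(\partial_Q,\nabla^Q,\nabla^{Q^*},R)$ of $B$ on the complex $\partial_Q\colon Q^*\to Q$ from Theorem~\ref{rajan}. By Proposition~\ref{metric_VBLA_via_sa_ruth}, the Lie algebroid structure is metric if and only if this $2$-representation is self-dual, i.e.\ if $\partial_Q^{*}=\partial_Q$, $(\nabla^Q)^*=\nabla^{Q^*}$, and $R^{*}=-R$. By Proposition~\ref{prop_CE2}, $\mathcal C(\mathbb E)$ is locally $C^\infty(M)$-generated by the horizontal lifts $\sigma_B(b)$ and the core-linear sections $\widetilde\omega$ with $\omega\in\Omega^2(Q)$, so the corollary reduces to matching each of the three self-duality conditions with closure of the brackets among these generators.

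To this end I would record the three bracket formulas. Two are given by Theorem~\ref{rajan} and the identities right after it:
\begin{equation*}
[\sigma_B(b_1),\sigma_B(b_2)]=\sigma_B[b_1,b_2]-\widetilde{R(b_1,b_2)},\qquad
[\sigma_B(b),\widetilde\omega]=\widetilde{\nabla_b^{\Hom}\omega},
\end{equation*}
where $\omega$ is viewed as an element of $\Gamma(\Hom(Q,Q^*))$. The third, $[\widetilde{\omega_1},\widetilde{\omega_2}]=\widetilde{\psi_{12}}$, is a core-linear section which I would compute via the Jacobi identity with an arbitrary core section $\tau^\dagger$, using $[\tau^\dagger,\widetilde{\omega_i}]=(\omega_i\partial_Q\tau)^\dagger$; a short calculation then shows that $\psi_{12}\circ\partial_Q=(\omega_2\partial_Q\omega_1-\omega_1\partial_Q\omega_2)\circ\partial_Q$, and the canonical choice $\psi_{12}=\omega_2\circ\partial_Q\circ\omega_1-\omega_1\circ\partial_Q\circ\omega_2$ is confirmed by checking consistency with the second formula (Leibniz for the induced bracket along $\nabla^{\Hom}$, using the compatibility $\partial_Q\nabla^{Q^*}=\nabla^Q\partial_Q$ intrinsic to a $2$-representation).

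For the forward direction, self-duality makes each of $R(b_1,b_2)$, $\nabla_b^{\Hom}\omega$, and $\omega_2\partial_Q\omega_1-\omega_1\partial_Q\omega_2$ skew-symmetric as a map $Q\to Q^*$: the first because $R^{*}=-R$; the second because duality of $\nabla^Q$ and $\nabla^{Q^*}$ implies that the induced $\nabla^{\Hom}$ preserves $\Omega^2(Q)$; and the third because $\partial_Q^{*}=\partial_Q$ combined with $\omega_i^{*}=-\omega_i$ yields $(\omega_2\partial_Q\omega_1-\omega_1\partial_Q\omega_2)^{*}=-(\omega_2\partial_Q\omega_1-\omega_1\partial_Q\omega_2)$. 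Hence every bracket of generators lies in $\mathcal C(\mathbb E)$. For the converse, assuming $[\mathcal C(\mathbb E),\mathcal C(\mathbb E)]\subseteq\mathcal C(\mathbb E)$, the first formula forces $R^{*}=-R$; the third forces $\partial_Q^{*}=\partial_Q$ via a pointwise linear-algebra argument in $\Lambda^2 Q_m$; and the second then forces $\nabla^{\Hom}$ to preserve $\Omega^2(Q)$, which combined with the $2$-representation compatibility and the already-established $\partial_Q^{*}=\partial_Q$ removes the possible conformal ambiguity in $(\nabla^Q)^{*}-\nabla^{Q^*}$ and delivers the missing duality of connections. Applying Proposition~\ref{metric_VBLA_via_sa_ruth} once more finishes the proof.

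The main obstacle is the treatment of the third family of brackets: deriving the formula for $[\widetilde{\omega_1},\widetilde{\omega_2}]$ is only partially determined by Jacobi with $\tau^\dagger$ (it pins $\psi_{12}$ down on the image of $\partial_Q$), so the full formula requires further consistency checks against the $\nabla^{\Hom}$ Leibniz rule; and the pointwise linear-algebra lemma showing that a skew $\Delta:=\partial_Q^{*}-\partial_Q\in\Lambda^2 Q_m$ satisfying $\omega_1\Delta\omega_2=\omega_2\Delta\omega_1$ for every pair of skew $\omega_1,\omega_2$ must vanish requires choosing explicit pairs of skew forms in a basis of $Q_m$ and computing directly.
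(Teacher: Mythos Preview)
Your overall strategy coincides with the paper's: fix a Lagrangian splitting, pass to the induced $2$-representation $(\partial_Q,\nabla^Q,\nabla^{Q^*},R)$, and match each self-duality condition with closure of one of the three families of brackets among the generators $\sigma_B(b)$ and $\widetilde\omega$ of $\mathcal C(\mathbb E)$. The paper proceeds exactly this way, asserting the three equivalences separately.

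Two points deserve comment. First, your derivation of $[\widetilde{\omega_1},\widetilde{\omega_2}]$ via the Jacobi identity with $\tau^\dagger$ is unnecessarily indirect and, as you note, only determines the answer on $\operatorname{im}\partial_Q$. The formula $[\widetilde{\phi_1},\widetilde{\phi_2}]=\widetilde{\phi_2\circ\partial_Q\circ\phi_1-\phi_1\circ\partial_Q\circ\phi_2}$ (for arbitrary $\phi_i\in\Gamma(\Hom(Q,Q^*))$) follows directly by writing $\widetilde{\phi}=\sum_i\ell_{\tau_i}(\tau_i')^\dagger$ locally and expanding with Leibniz, using $[\tau^\dagger,(\tau')^\dagger]=0$ and $\Theta(\tau^\dagger)=(\partial_Q\tau)^\uparrow$. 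This is how the paper obtains and uses the formula without ambiguity; your ``obstacle'' here disappears.

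Second, you correctly flag a scalar ambiguity in the converse for the connections that the paper's proof glosses over (it asserts ``$\nabla^{\Hom}$ preserves $\Omega^2(Q)$ $\Leftrightarrow$ $(\nabla^Q)^*=\nabla^{Q^*}$'', but the left-hand side only forces $(\nabla^Q_b)^*-\nabla^{Q^*}_b\in C^\infty(M)\cdot\Id_{Q^*}$ when $\operatorname{rk}Q\ge 2$). However, your proposed remedy does not work: the $2$-representation compatibility $\partial_Q\nabla^{Q^*}=\nabla^Q\partial_Q$ together with $\partial_Q^*=\partial_Q$ imposes no constraint on such a scalar shift. Concretely, take $M$ a point, $B=\R$ with trivial bracket, $\partial_Q=0$, $\nabla^Q=0$, $\nabla^{Q^*}_b=\lambda b\cdot\Id_{Q^*}$ for some $\lambda\neq 0$, and $R=0$. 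This is a valid $2$-representation, all three bracket families land in $\mathcal C(\mathbb E)$ (one computes $\nabla_b^{\Hom}\omega=\lambda b\,\omega$ and the other two brackets vanish), yet $(\nabla^Q)^*=0\neq\nabla^{Q^*}$, so the VB-algebroid is not metric. Thus the step ``the second then forces $(\nabla^Q)^*=\nabla^{Q^*}$'' fails in both your argument and the paper's; the corollary as stated appears to need an additional hypothesis (or an extra condition on the anchor/brackets with core sections, equivalent to the check $\{\ell_\chi,\ell_{\tau^\dagger}\}$ versus $\{\ell_\chi,\pi_1^*\ell_\tau\}$ that is used in the next corollary).
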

\begin{proof}
  Fix a Lagrangian splitting of $\mathbb E$ and consider the
  corresponding $2$-representation $(\nabla^Q,
  \nabla^{Q^*},R\in\Omega^2(B,\operatorname{Hom}(Q,Q^*)))$ of $B$ on
  $\partial_Q\colon Q^*\to Q$.  The equation
  $[\sigma_B(b_1),\sigma_B(b_2)]=\sigma_B[b_1,b_2]-\widetilde{R(b_1,b_2)}$
  shows that $[\sigma_B(b_1),\sigma_B(b_2)]\in\mathcal C(\mathbb E)$
  for all $b_1,b_2\in\Gamma(B)$ if and only if
  $R\in\Omega^2(B,Q^*\wedge Q^*)$.  

An easy computation shows that
  $\left[\sigma_B(b), \widetilde{\phi}\right]=\widetilde{\nabla_b^{\rm
      Hom}\phi}$ is an element of $\mathcal C(\mathbb E)$ for all
  $b\in\Gamma(B)$ and $\phi\in\Gamma(Q^*\wedge Q^*)$ if and only if
  $(\nabla^Q)^*=\nabla^{Q^*}$.

  Finally,
  $\left[\widetilde{\phi_1},\widetilde{\phi_2}\right]=\widetilde{\phi_2\circ\partial_Q\circ\phi_1-\phi_1\circ\partial_Q\circ\phi_2}\in\mathcal
  C(\mathbb E)$ if and only if
  $\phi_2\circ\partial_Q\circ\phi_1-\phi_1\circ\partial_Q\circ\phi_2\in\Gamma(Q^*\wedge
  Q^*)$. This is the case for all $\phi_1,\phi_2\in\Gamma(Q^*\wedge
  Q^*)$ if and only if $\partial_Q=\partial_Q^*$.
\end{proof}

\begin{corollary}\label{duality_poissoninv_metricdvb}
Let $(D;Q,Q;M)$ be an involutive double vector bundle and consider
  the dual metric double vector bundle
  $(\mathbb E={D}\duer{\pi_1}, B,Q,M)$. A linear Lie algebroid
  structure on $\mathbb E\to Q$ is metric if and only if the dual
  linear Poisson structure on $D$ is involutive.
\end{corollary}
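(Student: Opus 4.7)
The plan is to reduce both directions to the characterisation of metric VB-algebroids via closure of $\mathcal C(\mathbb E)$ under the Lie bracket (Corollary \ref{cor_1_met}), combined with the characterisation of $\mathcal C(\mathbb E)$ via the involution (Lemma \ref{inv_cois_sections}). I will use throughout the standard description of the linear Poisson structure on $D=\mathbb E\duer{\pi_1}$ induced by the linear Lie algebroid on $\mathbb E\to Q$: for $\chi,\chi_1,\chi_2\in\Gamma_Q(\mathbb E)$ and $F,F_1,F_2\in C^\infty(Q)$, the bracket is
\[
\{\ell_{\chi_1},\ell_{\chi_2}\}=\ell_{[\chi_1,\chi_2]},\qquad \{\ell_\chi,\pi_1^*F\}=\pi_1^*(\Theta(\chi)F),\qquad \{\pi_1^*F_1,\pi_1^*F_2\}=0,
\]
and by Mackenzie's theorem this Poisson structure is also linear over $\pi_2\colon D\to Q$. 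Additionally, by definition $\mathcal I^*\pi_1^*F=\pi_2^*F$ for $F\in C^\infty(Q)$.

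For the forward implication (involutive Poisson implies metric), pick $\chi_1,\chi_2\in\mathcal C(\mathbb E)$. Since by Lemma \ref{inv_cois_sections} we have $\mathcal I^*\ell_{\chi_i}=-\ell_{\chi_i}$, the anti-Poisson hypothesis yields
\[
\mathcal I^*\ell_{[\chi_1,\chi_2]}=\mathcal I^*\{\ell_{\chi_1},\ell_{\chi_2}\}=-\{\mathcal I^*\ell_{\chi_1},\mathcal I^*\ell_{\chi_2}\}=-\{\ell_{\chi_1},\ell_{\chi_2}\}=-\ell_{[\chi_1,\chi_2]},
\]
so by Lemma \ref{inv_cois_sections} again, $[\chi_1,\chi_2]\in\mathcal C(\mathbb E)$. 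Hence $\mathcal C(\mathbb E)$ is closed under the bracket and Corollary \ref{cor_1_met} yields that the Lie algebroid is metric.

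For the reverse implication, assume the Lie algebroid is metric. Since $\mathcal I^*\{F_1,F_2\}+\{\mathcal I^*F_1,\mathcal I^*F_2\}=0$ is bi-derivational in $(F_1,F_2)$, it suffices to verify it on a local set of algebra generators of $C^\infty(D)$. Fixing a Lagrangian splitting $\Sigma$ of $\mathbb E$ and the corresponding involutive splitting of $D$ (cf.~\S\ref{cor_splittings}), I take as generators $q_D^*C^\infty(M)$, the linear functions $\pi_1^*\ell_\tau$ and $\pi_2^*\ell_\tau=\ell_{\tau^\dagger}$ for $\tau$ in a local basis of $\Gamma(Q^*)$, and $\ell_{\sigma_B(b)}$ for $b$ in a local basis of $\Gamma(B)$. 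Brackets of two $\pi_i^*$-pullbacks vanish on both sides of the identity by linearity of the Poisson structure over both sides. For mixed pullbacks, using $\Theta(\tau^\dagger)=(\partial_Q\tau)^\uparrow$, the bracket $\{\pi_1^*\ell_{\tau_1},\pi_2^*\ell_{\tau_2}\}=-q_D^*\langle\tau_1,\partial_Q\tau_2\rangle$ is transformed by $\mathcal I^*$ and the anti-Poisson identity reduces exactly to $\partial_Q=\partial_Q^*$. The brackets $\{\ell_{\sigma_B(b)},\pi_i^*\ell_\tau\}$ similarly reduce to $(\nabla^Q)^*=\nabla^{Q^*}$, and the bracket $\{\ell_{\sigma_B(b_1)},\ell_{\sigma_B(b_2)}\}$ reduces to $R^*=-R$. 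These three identities are precisely the defining conditions of a self-dual $2$-representation, which by Proposition \ref{metric_VBLA_via_sa_ruth} is equivalent to the metric hypothesis.

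The main obstacle is the case analysis in the reverse direction: each piece of the metric condition ($\partial_Q=\partial_Q^*$, $(\nabla^Q)^*=\nabla^{Q^*}$, and $R^*=-R$) must be matched with the anti-Poisson identity on the appropriate pair of generators, with the signs coming from the swap induced by $\mathcal I$ tracked carefully. By contrast, the forward direction is a clean one-line consequence of the preceding lemmas.
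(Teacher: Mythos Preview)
Your proof is correct and follows essentially the same strategy as the paper: verify the anti-Poisson identity on a generating set of functions and reduce to the self-duality conditions on the induced $2$-representation. The forward direction is identical to the paper's (closure of $\mathcal C(\mathbb E)$ via Lemma~\ref{inv_cois_sections}, then Corollary~\ref{cor_1_met}).

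The reverse direction is organised slightly differently. The paper treats both directions at once, working with general $\chi\in\mathcal C(\mathbb E)$ as generators and showing that the anti-Poisson identity on each pair is \emph{equivalent} to one of the self-duality conditions, then invoking Corollary~\ref{cor_1_met}. You instead fix a Lagrangian splitting, use the more concrete generators $\ell_{\sigma_B(b)}$, and appeal to Proposition~\ref{metric_VBLA_via_sa_ruth} to unpack the metric hypothesis as $\partial_Q=\partial_Q^*$, $(\nabla^Q)^*=\nabla^{Q^*}$, $R^*=-R$. Both routes are equivalent; yours is arguably a bit more explicit in that it exhibits directly which generator pair is responsible for each piece of the self-duality, while the paper's packaging via closure of $\mathcal C(\mathbb E)$ is more invariant. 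In particular, the paper does not spell out the $\ell_\chi,\ell_{\tau^\dagger}$ and $\ell_\chi,\pi_1^*\ell_\tau$ cases separately (they are absorbed into the closure condition), whereas your $\{\ell_{\sigma_B(b)},\pi_i^*\ell_\tau\}$ check covers them.
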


\begin{proof}
  We need to find
  \begin{equation}\label{I_anti_p}
\{\mathcal I^*(F_1), \mathcal I^*(F_2)\}=-\mathcal I^*\{F_1,F_2\}
\end{equation}
for all $F_1, F_2\in C^\infty(D)$.  Since $\mathcal C(\mathbb E)$ and
$\Gamma_Q^c(\mathbb E)$ span pointwise $\mathbb E$, it is sufficient
to check \eqref{I_anti_p} on functions $\ell_\chi$,
$\ell_{\tau^\dagger}=\pi_2^*\ell_\tau$, $\pi_1^*\ell_\tau$ and
$\pi_1^*q_Q^*f$ for $\chi \in \mathcal C(\mathbb E)$,
$\tau\in\Gamma(Q^*)$ and $f\in C^\infty(M)$. Using Lemma
\ref{inv_cois_sections}, it is easy to see that \eqref{I_anti_p} is
trivially satisfied on $\pi_1^*q_Q^*f_1$ and $\pi_1^*q_Q^*f_2$, on
$\pi_1^*q_Q^*f$ and $\pi_1^*\ell_\tau$, and equivalently on
$\pi_1^*q_Q^*f$ and $\ell_{\tau^\dagger}$, on $\pi_1^*\ell_{\tau_1}$ and $\pi_1^*\ell_{\tau_2}$
and equivalently on $\ell_{\tau_1^\dagger}$ and
$\ell_{\tau_2^\dagger}$ for $f,f_1,f_2\in C^\infty(M)$ and
$\tau,\tau_1,\tau_2\in\Gamma(Q^*)$.

The equalities
\[\{\mathcal I^*\ell_{\tau_1^\dagger}, \mathcal
I^*\pi_1^*\ell_{\tau_2}\}=\{\pi_1^*\ell_{\tau_1},
\ell_{\tau_2^\dagger}\}=-\{\ell_{\tau_2^\dagger},
\pi_1^*\ell_{\tau_1}\}=-\pi_1^*(\partial_Q\tau_2)^\dagger(\ell_{\tau_1})=-\pi_1^*q_Q^*\langle\partial_Q\tau_2,\tau_1\rangle\]
and 
\[\mathcal I^*\{\ell_{\tau_1^\dagger},\pi_1^*\ell_{\tau_2}\}=\mathcal
I^*\pi_1^*q_Q^*\langle\partial_Q\tau_1,\tau_2\rangle=\pi_1^*q_Q^*\langle\partial_Q\tau_1,\tau_2\rangle\]
show that 
$\{\mathcal I^*\ell_{\tau_1^\dagger}, \mathcal
I^*\pi_1^*\ell_{\tau_2}\}=-\mathcal
I^*\{\ell_{\tau_1^\dagger},\pi_1^*\ell_{\tau_2}\}$ for all
$\tau_1,\tau_2\in\Gamma(Q^*)$ if and only if
$\partial_Q=\partial_Q^*$.
Further, we find $\{\mathcal I^*\ell_\chi,\mathcal
I^*\pi_1^*q_Q^*f\}=-\{\ell_\chi,\pi_1^*q_Q^*f\}=-\pi_1^*q_Q^*\rho_B(b)(f)
=-\mathcal I^*\pi_1^*q_Q^*\rho_B(b)(f)=-\mathcal
I^*\{\ell_\chi,\pi_1^*q_Q^*f\}$ for $f\in C^\infty(M)$ and
$\chi\in\mathcal C(\mathbb E)$.

Finally, we have $\{\mathcal I^*\ell_{\chi_1},\mathcal
I^*\ell_{\chi_2}\}=\{\ell_{\chi_1},\ell_{\chi_2}\}=\ell_{[\chi_1,\chi_2]}$
and $-\mathcal I^*\{\ell_{\chi_1},\ell_{\chi_2}\}=-\mathcal
I^*\ell_{[\chi_1,\chi_2]}=\ell_{[\chi_1,\chi_2]}$ if and only if
$[\chi_1,\chi_2]\in\mathcal C(\mathbb E)$. Hence, we can conclude
using Corollary \ref{cor_1_met}.
\end{proof}

\begin{remark}
Note that a linear Poisson structure on $D$ is involutive if
and only if $\mathcal P(D)$ defined in \S\ref{geom_int} is closed
under the Poisson bracket: by \eqref{I_anti_p}, if $\mathcal
I^*F_1=-F_1$ and $\mathcal I^*F_2=-F_2$, then $\mathcal I^*\{F_1,F_2\}=-\{\mathcal I^*F_1,\mathcal I^*F_2\}=-\{F_1,F_2\}$.  The
Poisson bracket of $\pi_2^*\ell_{\tau_1}-\pi_1^*\ell_{\tau_1}$ with
$\pi_2^*\ell_{\tau_2}-\pi_1^*\ell_{\tau_2}$ vanishes and the Poisson
bracket of $\ell_{\sigma_B(b)+\widetilde{\omega}}$ with
  $\pi_2^*\ell_{\tau}-\pi_1^*\ell_{\tau}$ is
  $\pi_2^*\ell_{\nabla_b\tau-\ip{\partial_Q\tau}\omega}-\pi_1^*\ell_{\nabla_b\tau-\ip{\partial_Q\tau}\omega}$. 
The rest follows as in the proof of the preceding theorem.
\end{remark}

\subsection{Equivalence of Poisson $[2]$-manifolds with Poisson involutive
double vector bundles.}
The functors found in Section \ref{eq_2_manifolds} between the
category of metric double vector bundles and the category of
$[2]$-manifolds induce functors between the category of metric
VB-algebroids and the category of Poisson $[2]$-manifolds.

\medskip
  Let $(\mathcal M, \{\cdot\,,\cdot\})$ be a Poisson $[2]$-manifold
  and consider the involutive double vector bundle $(\mathcal
  G(\mathcal M), E_1,E_1,M) $ with core $E_2$ corresponding to
  $\mathcal M$ as in \S\ref{Geom_functor}. Then the isotropic linear
  sections $\mathcal C(\mathbb E)$ can be identified with the degree
  $2$ functions on $\mathcal M$ and the core sections $\Gamma_Q^c(
  \mathbb E)$ can be identified with the degree $1$ functions on
  $\mathcal M$.  Since the sections $\mathcal C(\mathbb E)\cup
  \Gamma_Q^c( \mathbb E)$ span $\mathbb E\to Q$, the Poisson bracket on $C^\infty(\mathcal M)$ defines
   a linear Poisson bracket on $\mathcal G(\mathcal M)$:
\begin{equation}\label{geom_functor_poisson}
\begin{split}
  \{\ell_{\chi_1},\ell_{\chi_2}\}&=\ell_{\{\chi_1,\chi_2\}},\qquad \{\ell_\chi, \pi_1^*q_Q^*f\}=\pi_1^*q_Q^*\{\chi,f\}, \qquad \{\ell_\chi, \pi_1^*\ell_\tau\}=\pi_1^*\ell_{\{\chi,\tau\}}\\
  \{\ell_\chi,\ell_{\tau^\dagger}\}&=\ell_{\{\chi,\tau\}^\dagger}, \qquad \{\ell_{\tau_1^\dagger}, \ell_{\tau_2^\dagger}\}=0, \qquad \{\ell_{\tau_1^\dagger},\pi_1^*\ell_{\tau_2}\}=\pi_1^*q_Q^*{\{\tau_1,\tau_2\}}\\
  \{\pi_1^*\ell_{\tau_1}, \pi_1^*\ell_{\tau_2}\}&=0, \qquad
  \{\ell_{\tau^\dagger},\pi_1^*q_Q^*f\}=0, \qquad
  \{\pi_1^*\ell_\tau,\pi_1^*q_Q^*f\}=0, \,\text{ and }\,
  \{\pi_1^*q_Q^*f_1,\pi_1^*q_Q^*f_2\}=0
\end{split}
\end{equation}
for all $f,f_1,f_2\in C^\infty(M)$, $\tau,\tau_1,\tau_2\in\Gamma(E_1^*)$
and $\chi,\chi_1,\chi_2\in\mathcal C(\mathbb E)$. Per definition, the
involution $\mathcal I$ on $\mathcal G(\mathcal M)$ is anti-Poisson,
and so $(\mathcal G(\mathcal M), \mathcal I, \{\cdot\,,\cdot\})$ is a
Poisson involutive double vector bundle.

Let now $\mu\colon \mathcal M_1\to\mathcal M_2$ be a morphism of
Poisson $[2]$-manifolds. By the definition of the Poisson bracket in
\eqref{geom_functor_poisson},
the morphism $\mathcal G(\mu)\colon \mathcal G(\mathcal
M_1)\to\mathcal G(\mathcal M_2)$ is automatically a Poisson
morphism. For example, for functions $\chi_1,\chi_2\in
C^\infty(\mathcal M_2)$ of degree $2$, we have 
\begin{equation*}
\begin{split}
\{\mathcal G(\mu)^*\ell_{\chi_1},\mathcal G(\mu)^*\ell_{\chi_1}\}
&=\{\ell_{\mu^\star\chi_1},\ell_{\mu^\star\chi_2}\}=\ell_{\{\mu^\star\chi_1,\mu^\star\chi_2\}}=\ell_{\mu^\star\{\chi_1,\chi_2\}}\\
&=\mathcal
  G(\mu)^*\ell_{\{\chi_1,\chi_2\}}=\mathcal
  G(\mu)^*\{\ell_{\chi_1},\ell_{\chi_2}\}.
\end{split}
\end{equation*}
We let the reader check the other cases.  Hence, the functor
$\mathcal G$ induces a functor $\mathcal G_P$ from the category of
Poisson $[2]$-manifolds to the category of Poisson involutive double
vector bundles.

\bigskip

Conversely, we consider a Poisson involutive double vector bundle
$(D,Q,Q,M;\{\cdot\,,\cdot\})$ with core $B^*$, or
equivalently, a metric VB-algebroid $(D\duer{\pi_1}=:\mathbb E\to Q,
B\to M)$. Consider the image $\mathcal M(D)$ of $D$ under the functor
$\mathcal M$. Then $\mathcal M(D)$ is the $[2]$-manifold which degree
$0$ functions are the elements of $C^\infty(M)$, which degree $1$
functions are the elements of $\Gamma(Q^*)$ and which degree
$2$-functions are the elements of $\mathcal C(\mathbb E)$, with
$\tau_1\wedge\tau_2=\widetilde{\tau_1\wedge\tau_2}\in \mathcal
C(\mathbb E)$ for $\tau_1,\tau_2\in\Gamma(Q^*)$. We define a Poisson
bracket on $\mathcal M(D)$ by 
\begin{equation}\label{alg_functor_poisson}
\begin{split}
  \{\chi_1,\chi_2\}&=[\chi_1,\chi_2], \quad
  \{\chi,\tau\}^\dagger=[\chi,\tau^\dagger], \\
  \{\tau_1,\tau_2\}&=\langle\tau_1,\partial_Q\tau_2\rangle, \quad
  \{\chi, f\}=\rho_B(b)(f),\quad \{\tau,f\}=\{f_1,f_2\}=0
\end{split}
\end{equation}
on the generators $\chi,\chi_1,\chi_2\in\mathcal C(\mathbb E)$, with $\chi$ linear
over $b\in\Gamma(B)$, $f,f_1,f_2\in C^\infty(M)$ and
$\tau,\tau_1,\tau_2\in \Gamma(Q^*)$, and by graded symmetry and graded
Leibniz extension to all functions on $\mathcal M$. Clearly, this graded
bracket has degree $-2$ and is well-defined. Let $\Omega\colon D_1\to
D_2$ be a morphism of Poisson involutive double vector bundle. In a
similar manner as above, it is easy to check that $\mathcal
M(\Omega)\colon\mathcal M(D_1)\to\mathcal M(D_2)$
is a morphism of Poisson $[2]$-manifolds. Hence, $\mathcal M$ induces
a functor $\mathcal M_P$ from the category of Poisson involutive
double vector bundles to the category of Poisson $[2]$-manifolds.

\bigskip

The two functors $\mathcal M_P$ and $\mathcal G_P$ define together an
equivalence of the category of Poisson involutive double vector bundle
with the category of Poisson $[2]$-manifolds. Hence, we have proved the following theorem.

\begin{theorem}\label{eq_poisson}
  The functors $\mathcal M_P$ and $\mathcal G_P$ define together an
  equivalence of the category of Poisson involutive double vector
  bundle with the category of Poisson $[2]$-manifolds.
\end{theorem}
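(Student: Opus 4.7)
The plan is to leverage the equivalence of underlying categories from Theorem \ref{main_crucial}, so that what remains to be proved splits into two claims: (a) $\mathcal G_P$ and $\mathcal M_P$ are well defined as functors on the Poisson categories, and (b) the natural isomorphisms $\mathcal G\circ\mathcal M\cong\operatorname{Id}_{\operatorname{IDVB}}$ and $\mathcal M\circ\mathcal G\cong\operatorname{Id}_{\operatorname{[2]-Man}}$ of Theorem \ref{main_crucial} restrict to natural isomorphisms of the corresponding Poisson objects.

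For (a), on the $\mathcal G_P$ side I would check that the formulas \eqref{geom_functor_poisson} define a linear Poisson structure on $\mathcal G(\mathcal M)$: linearity over both $\pi_1$ and $\pi_2$ is explicit in the formulas, graded Jacobi propagates from the graded Jacobi on $C^\infty(\mathcal M)$ because the generators $\mathcal C(\mathbb E)\cup\Gamma_Q^c(\mathbb E)$ span $\Gamma_Q(\mathbb E)$ pointwise, and the involution $\mathcal I$ is anti-Poisson on generators by Lemma \ref{inv_cois_sections}, hence on all of $C^\infty(\mathcal G(\mathcal M))$ by the Leibniz rule. On the $\mathcal M_P$ side, the bracket \eqref{alg_functor_poisson} is well defined as a bracket on generators of $C^\infty(\mathcal M(D))$ because Corollary \ref{cor_1_met} (combined with Corollary \ref{duality_poissoninv_metricdvb}) guarantees that $[\chi_1,\chi_2]\in\mathcal C(\mathbb E)$, that $\partial_Q=\partial_Q^*$ (so $\{\tau_1,\tau_2\}$ is graded-symmetric), and that $(\nabla^Q)^*=\nabla^{Q^*}$. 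That morphisms are preserved in both directions is automatic, because a Poisson morphism is characterised by compatibility with the brackets on the respective generators.

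For (b), under the identifications $\mathcal C(\mathbb E)\leftrightarrow C^\infty(\mathcal M)^2$ and $\Gamma_Q^c(\mathbb E)\leftrightarrow C^\infty(\mathcal M)^1$ supplied by Theorem \ref{main_crucial}, the formulas \eqref{geom_functor_poisson} and \eqref{alg_functor_poisson} are term-by-term inverse translations of each other on generators, so the natural isomorphisms of Theorem \ref{main_crucial} automatically respect the Poisson brackets.

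The main obstacle is verifying the full graded Jacobi identity for the bracket defined by \eqref{alg_functor_poisson} on $C^\infty(\mathcal M(D))$. Rather than grind through all combinations of degree-$1$ and degree-$2$ generators, I would reduce to the split case: choose a Lagrangian splitting of $\mathbb E=D\duer{\pi_1}$, invoke Proposition \ref{metric_VBLA_via_sa_ruth} to convert the metric VB-algebroid structure into a self-dual $2$-representation on $\partial_Q\colon Q^*\to Q$, and then apply Theorem \ref{poisson_is_saruth} to obtain a bona fide Poisson bracket on the split $[2]$-manifold $Q[-1]\oplus B^*[-2]$. A direct comparison shows that this bracket coincides termwise with \eqref{alg_functor_poisson} on generators. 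Independence of the Lagrangian splitting then follows because a change of Lagrangian splitting acts on the underlying self-dual $2$-representation as in Remark \ref{change} by a tensor $\phi_{12}\in\Gamma(Q^*\wedge Q^*\otimes B^*)$ (Proposition \ref{lagrangian_change_of_split}), and this twist leaves invariant the intrinsic data $[\chi_1,\chi_2]$, $\rho_B$ and $\partial_Q$ that appear in \eqref{alg_functor_poisson}.
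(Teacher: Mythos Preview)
Your proposal is correct and follows essentially the paper's approach: the paper's proof is just the discussion preceding the theorem statement, which constructs $\mathcal G_P$ via \eqref{geom_functor_poisson} and $\mathcal M_P$ via \eqref{alg_functor_poisson}, checks functoriality on morphisms, and then declares the equivalence by appeal to Theorem \ref{main_crucial}. The one place where you go further than the paper is the Jacobi identity for \eqref{alg_functor_poisson}: the paper simply asserts that the bracket is ``well-defined'' (relying implicitly on the Lie algebroid axioms on $\mathbb E\to Q$ and Corollary \ref{cor_1_met}), whereas you propose an alternative, more explicit route through a Lagrangian splitting, Proposition \ref{metric_VBLA_via_sa_ruth}, and Theorem \ref{poisson_is_saruth}. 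Both routes are valid; yours has the advantage of making all the verifications transparent, while the paper's tacit argument is shorter because the Jacobi identity for the abstractly defined bracket in \eqref{alg_functor_poisson} is literally the Jacobi identity for the Lie algebroid bracket restricted to $\mathcal C(\mathbb E)\cup\Gamma_Q^c(\mathbb E)$ together with the anchor compatibility.
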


\subsection{Examples}

We conclude by discussing three important classes of examples.
\subsubsection{Tangent doubles of metric vector bundles vs symplectic $[2]$-manifolds }\label{tangent_euclidean}
Consider a metric vector bundle $E\to M$ and a metric connection 
$\nabla\colon\mx(M)\times\Gamma(E)\to\Gamma(E)$. The double tangent
\begin{equation*}
\begin{xy}
\xymatrix{
TE \ar[d]_{Tq_E}\ar[r]^{p_E}& E\ar[d]^{q_E}\\
 TM\ar[r]_{p_M}& M}
\end{xy}
\end{equation*}
has a VB-algebroid structure $(TE\to E;TM\to M)$ and a linear metric\linebreak
$\langle\cdot\,,\cdot\rangle\colon TE\times_{TM}TE\to \R$ defined as
in Example \ref{metric_connections}.  

Recall that Lagrangian linear splittings of $TE$ are equivalent to metric
connections $\nabla\colon \mx(M)\times\Gamma(E)\to\Gamma(E)$.  In other
words, $\nabla=\nabla^*$ when $E^*$ is identified with $E$ via the
non-degenerate pairing. The $2$-representation $(\Id_E\colon E\to E,
\nabla, \nabla, R_\nabla)$ defined by the Lagrangian splitting
$\Sigma^\nabla\colon E\times_M TM\to TE$ and the VB-algebroid $(TE\to
E, TM\to M)$ is then self-dual (see also \S\ref{symplectic}).

The Poisson $[2]$-manifold $\mathcal M(TE)$ associated to $TE$ is
given as follows. The functions of degree $0$ are elements of
$C^\infty(M)$, the functions of degree $1$ are sections of $E$ ($E$ is
identified with $E^*$ via the isomorphism $\Beta\colon E\to E^*$
defined by the pairing) and the functions of degree $2$ are the vector
fields $\widehat{\delta}\in\mx(E)$ for a derivation $\delta$ of $E$ over
$X\in\mx(M)$, that preserves the pairing. The Poisson bracket is given
by $\{\widehat{\delta_1},\widehat{\delta_2}\}=\widehat{[\delta_1,\delta_2]}$, $\{\widehat{\delta},
e\}=\delta(e)$ and $\{\widehat{\delta}, f\}=X(f)$, $\{e_1,e_2\}=\langle
e_1, e_2\rangle$, and $\{e, f\}=\{f_1,f_2\}=0$ for all
$e,e_1,e_2\in\Gamma(E)$, $f,f_1,f_2\in C^\infty(M)$ and $\widehat{\delta},
\widehat{\delta_1}, \widehat{\delta_2}\in \mx^{\langle\cdot\,,\cdot\rangle, l}(E)$.  The Poisson
$[2]$-manifold $\mathcal M(TE)$ splits as the split Poisson
$[2]$-manifold described in \S\ref{symplectic}. It is hence
symplectic.

\bigskip Let $N$ be a smooth manifold. Then $T^*N$ carries the
canonical symplectic structure $\omega_{\rm can}$ given by
$\omega_{\rm can}=-\dr\theta_{\rm can}$ with $\theta_{\rm can}\in
\Omega^1(T^*N)$ given by $\theta_{\rm can}(v_{\alpha_n})=\langle
\eta_n, T_{\eta_n}c_Nv_{\alpha_n}\rangle$, where $c_N\colon T^*N\to N$
is the canonical projection. Each vector field $X\in\mx(N)$, the
derivation $\ldr{X}$ of $T^*N$ defines a linear vector field
$\widehat{\ldr{X}}\in\mx^l(T^*N)$. Further, each $1$-form $\eta\in
\Omega^1(N)$ defines a vertical vector field $\eta^\uparrow\in
\mx^c(T^*N)$. It is easy to see that 
\[\ip{\widehat{\ldr{X_2}}}\ip{\widehat{\ldr{X_1}}}\omega_{\rm can}=-\ell_{[X_1,X_2]}, \quad \ip{\eta^\uparrow}\ip{\widehat{\ldr{X}}}\omega_{\rm can}=c_N^*\langle \eta,X\rangle, \quad 
\ip{\eta_1^\uparrow}\ip{\eta_2^\uparrow}\omega_{\rm can}=0
\]
for $X,X_1,X_2\in\mx(N)$ and $\eta,\eta_1,\eta_2\in\Omega^1(N)$.  Next we
apply this to the smooth manifold $E$ to compute the canonical
symplectic form on $T^*E$.

The involutive double vector bundle that is dual to $TE$ is $T^*E$
with sides $E$ and $E\simeq E^*$ and core $T^*M$ (see Example
\ref{inv_dvb_metric_connections}). Recall that $\mathcal
C(TE)\subseteq \mx^l(E)$ is here the set of linear vector fields on
$E$ defined by derivations of $E$ that preserve the pairing. We
consider $\widehat{\ldr{\widehat{\delta}}}\in \mx(T^*E)$ and
$\widehat{\ldr{e^\uparrow}}\in\mx(T^*E)$ for $\widehat{\delta}\in \mathcal
C(TE)$ and $e\in\Gamma(E)$. We consider the $1$-forms $\dr\ell_e,
q_e^*\dr f\in \Omega^1(E)$ for $e\in\Gamma(E)$ and $f\in C^\infty(M)$,
and so the induced vector fields $(\dr\ell_e)^\uparrow, (q_E^*\dr
f)^\uparrow\in \mx(T^*E)$.  Recall also that $\widehat{\delta}\in \mathcal
C(TE)$ and $e^\uparrow\in\mx^c(E)$ define linear functions on $T^*E$,
and that $c_E^*\ell_e$ and $c_E^*q_E^*f\in C^\infty(T^*E)$. The considerations above
show that
\begin{equation*}
\begin{split}
  \omega_{\rm can}\left(\widehat{\ldr{\widehat{\delta_1}}},
    \widehat{\ldr{\widehat{\delta_2}}}\right)&=\ell_{\left[\widehat{\delta_2},\widehat{\delta_1}\right]}=\left\{\ell_{\widehat{\delta_2}},
    \ell_{\widehat{\delta_1}}\right\}, \qquad \omega_{\rm can}\left(\widehat{\ldr{\widehat{\delta}}},
    \widehat{\ldr{e^\uparrow}}\right)=-\ell_{\delta(e)^\uparrow}=\{\ell_{e^\uparrow},\ell_{\widehat{\delta}}\},\\
\omega_{\rm can}\left(\widehat{\ldr{\widehat{\delta}}},
   (\dr\ell_e)^\uparrow \right)&=c_E^*\ell_{\delta(e)}=\{\ell_{\widehat{\delta}},c_E^*\ell_{e}\}, \,
\omega_{\rm can}\left(\widehat{\ldr{\widehat{\delta}}},
   (q_E^*\dr f)^\uparrow \right)=c_E^*q_E^*X(f)=\left\{ \ell_{\widehat{\delta}}, c_E^*q_E^*f\right\},\\
\omega_{\rm can}\left(\widehat{\ldr{e_1^\uparrow}},
    \widehat{\ldr{e_2^\uparrow}}\right)&=0=\{\ell_{e_1^\uparrow},\ell_{e_2^\uparrow}\}, \qquad  \omega_{\rm can}\left(\widehat{\ldr{e_1^\uparrow}},
   (\dr\ell_e)^\uparrow \right)=c_E^*q_E^*\langle e_1,e_2\rangle=\left\{\ell_{e_1^\uparrow}, c_E^*\ell_{e_2}\right\},\\
\end{split}
\end{equation*}
and that $\omega_{\rm can}$ vanish on any other combincation of two
vector fields of the four type. This shows that $\omega_{\rm
  can}^\flat(\widehat{\ldr{\widehat{\delta}}})=\dr\ell_{\widehat{\delta}}$,
$\omega_{\rm
  can}^\flat(\widehat{\ldr{e^\uparrow}})=\dr\ell_{e^\uparrow}$,
$\omega_{\rm can}^\flat((\dr\ell_e)^\uparrow)=-\dr(c_E^*\ell_{e})$ and
$\omega_{\rm can}^\flat((q_E^*\dr f)^\uparrow)=-\dr(c_E^*q_E^*f)$. An
easy computation shows then that the Poisson structure constructed in
\eqref{geom_functor_poisson} equals the Poisson structure on
$C^\infty(T^*E)$ that is induced by $-\omega_{\rm can}$.

Thus, we have found that the equivalence found in Theorem
\ref{eq_poisson} restricts to an equivalence of symplectic
$[2]$-manifolds with symplectic cotangent doubles of metric vector bundles (see
also \cite{Roytenberg02}).

\subsubsection{The metric double of a VB-algebroid}\label{metric_double_VB_alg}
Take a VB-algebroid $(D\to A, B \to M)$ with core $C$ and a linear
splitting $\Sigma\colon A\times_M B\to D$.  Let $(\partial_A\colon
C\to A, \nabla^A,\nabla^C, R\in \Omega^2(B,\operatorname{Hom}(A,C)))$
be the $2$-representation of the Lie algebroid $B$ that is induced by
$\Sigma$.  Recall from \eqref{lemma_dual_splitting} that the splitting
$\Sigma$ induces a splitting $\Sigma^\star\colon B\times_M C^*\to
D\duer B$, and from \S\ref{dual_and_ruths} that $(D\duer B\to C^*,
B\to M)$ has an induced VB-algebroid structure given in this splitting
by the $2$-representation \[(\partial_A^*\colon A^*\to C^*,
{\nabla^A}^*,{\nabla^C}^*, -R^*\in
\Omega^2(B,\operatorname{Hom}(C^*,A^*))).\]

The direct sum $D\oplus_BD\duer B$ over $B$ \begin{equation*}
\begin{xy}
\xymatrix{
D\oplus_BD\duer B \ar[d]\ar[r]& B\ar[d]\\
A\oplus C^*\ar[r]& M}
\end{xy}
\end{equation*}
has then a VB-algebroid structure $(D\oplus_BD\duer B\to A\oplus
C^*, B\to M)$ with core $C\oplus A^*$. It is easy to see that $\Sigma$
and $\Sigma^\star$ define a linear splitting $\tilde\Sigma\colon
B\times_M (A\oplus C^*)\to D\oplus_BD\duer B$, $\tilde \Sigma(b_m,
(a_m,\gamma_m))=(\Sigma(a_m,b_m),\Sigma^\star(b_m,\gamma_m))$.
The induced $2$-representation is 
\[(\partial_A\oplus\partial_A^*\colon C\oplus A^*\to A\oplus C^*, \nabla^A\oplus{\nabla^C}^*,\nabla^C\oplus{\nabla^A}^*,
R\oplus(-R^*)),
\]
a self-dual $2$-representation of the Lie algebroid $B$. This gives us
a new class of examples of (split) Poisson $2$-manifolds induced from
ordinary $2$-representations or VB-algebroids.  Note that the
splittings of $D\oplus D\duer B$ obtained as above are not the only
Lagrangian splittings, and that the example of $(TA\oplus T^*A\to
TM\oplus A^*, A\to M)$ discussed in the next example and in \cite{Jotz13a} is a
special case.

\subsubsection{The Pontryagin algebroid over a Lie algebroid}\label{pont_VB_LA}
If $A$ is a Lie algebroid, then \linebreak $(TA\to TM, A\to M)$ is a VB-algebroid
and since $TA\duer A=T^*A$, the double vector bundle $T^*A$ has a
VB-algebroid structure $(T^*A\to A^*, A\to M)$ with core $T^*M$.  As a
consequence, the direct sum $TA\oplus T^*A$ over $A$ has a
VB-algebroid structure $(TA\oplus T^*A\to TM\oplus A^*, A\to
M)$. Recall from Example \ref{met_TET*E} that $(TA\oplus T^*A;TM\oplus
A^*,A;M)$ has also a natural linear metric, which is given by
\eqref{standard_pairing}.

Recall from Example \ref{met_TET*E} that linear splittings of
$TA\oplus T^*A$ are in bijection with dull brackets on sections of
$TM\oplus A^*$, and so also with Dorfman connections\linebreak $\Delta\colon
\Gamma(TM\oplus A^*)\times\Gamma(A\oplus T^*M)\to\Gamma(A\oplus
T^*M)$. We give in \cite{Jotz13a} the $2$-representation
$((\rho,\rho^*)\colon A\oplus T^*M\to TM\oplus A^*, \nabla^{\rm
  bas},\nabla^{\rm bas}, R_\Delta^{\rm bas})$ of $A$ that is defined
by the VB-algebroid $(TA\oplus T^*A\to TM\oplus A^*, A\to M)$ and any
such Dorfman connection: The connections $\nabla^{\rm bas}\colon
\Gamma(A)\times\Gamma(A\oplus T^*M) \to\Gamma(A\oplus T^*M)$ and
$\nabla^{\rm bas}\colon \Gamma(A)\times\Gamma(TM\oplus A^*)
\to\Gamma(TM\oplus A^*)$ are
\begin{equation*}
\nabla^{\rm bas}_a(X,\alpha)=(\rho,\rho^*)(\Omega_{(X,\alpha)}a)+\ldr{a}(X,\alpha)
\quad \text{ and } \quad \nabla^{\rm
  bas}_a(b,\theta)=\Omega_{(\rho,\rho^*)(b,\theta)}a+\ldr{a}(b,\theta), 
\end{equation*}
where $\Omega\colon \Gamma(TM\oplus A^*)\times\Gamma(A)\to\Gamma(A\oplus T^*M)$ is defined by
\[\Omega_{(X,\alpha)}a=\Delta_{(X,\alpha)}(a,0)-(0,\dr\langle\alpha, a\rangle)
\] and for $a\in\Gamma(A)$, the derivations $\ldr{a}$ over $\rho(a)$
are defined by:
\[\ldr{a}\colon \Gamma(A\oplus T^*M)\to\Gamma(A\oplus T^*M),\quad \ldr{a}(b,\theta)=([a,b], \ldr{\rho(a)}\theta)\]
and 
\[\ldr{a}\colon \Gamma(TM\oplus A^*)\to\Gamma(TM\oplus A^*), \quad \ldr{a}(X,\alpha)=([\rho(a),X], \ldr{a}\alpha).\]
We prove in \cite{Jotz13a} that the two connections above are dual to
each other if and only if the dull bracket dual to $\Delta$ is
skew-symmetric. Hence, the two connections are dual to each other if
and only if the chosen linear splitting is Lagrangian (see Example
\ref{met_TET*E}).  The basic curvature
$R_\Delta^{\rm bas}\colon \Gamma(A)\times\Gamma(A)\times\Gamma(TM\oplus A^*)\to\Gamma(A\oplus T^*M)$
is given by 
\begin{align*}
R_\Delta^{\rm bas}(a,b)(X,\xi)=&-\Omega_{(X,\xi)}[a,b] +\ldr{a}\left(\Omega_{(X,\xi)}b\right)-\ldr{b}\left(\Omega_{(X,\xi)}a\right)\\
&\qquad                                                     + \Omega_{\nabla^{\rm bas}_b(X,\xi)}a-\Omega_{\nabla^{\rm bas}_a(X,\xi)}b.
\end{align*}
Assume that the linear splitting is Lagrangian.  A relatively long but
straightforward computation shows that ${R_\Delta^{\rm
    bas}}^*=-R_\Delta^{\rm bas}$, and so that the $2$-representation
is self-dual. Hence $(TA\oplus T^*A\to TM\oplus A^*, A\to M)$ is a
metric VB-algebroid.

\appendix

\section{Proof of Lemma \ref{bundlemap_eq_to_morphism}}\label{morphisms_VB_sheaves}
Let $\omega\colon A\to B$ be a vector bundle morphism over a smooth
map $\omega_0\colon M\to N$.  The morphism $\omega$ induces then a
vector bundle morphism $\omega^!\colon A\to \omega_0^*B$,
$\omega^!(a_m)=(m,\omega(a_m))$ over the identity on $M$. For a
section $b\in\Gamma_V(B)$, we get in a similar manner a section
$\omega_0^!b\in\Gamma_{\omega_0\inv(V)}(\omega_0^*B)$; defined by
$(\omega_0^!b)(m)=(m,b(\omega_0(m)))$ for all $m\in \omega_0\inv(V)$.

\medskip

In order to prove Lemma \ref{bundlemap_eq_to_morphism} we first check
that $\omega^\star$ has the specified codomain, that is, that the
image under $\omega^\star$ of a smooth section of $B^*$ is again
smooth. Consider the pullback of $B$ under $\omega_0$, i.e.~the vector
bundle $\omega_0^*B\to M$. We have
$(\omega_0^*B)^*\simeq\omega_0^*B^*$ and the smoothness of
$\omega^\star(\beta)$ follows from the equality
$\omega^\star(\beta)=(\omega^!)^*(\omega_0^!\beta)$: for each $m\in
M$, and each $a_m\in A_m$, we have
\begin{align*}
  \langle((\omega^!)^*(\omega_0^!\beta))(m), a_m\rangle&=\langle(\omega_0^!\beta)(m),
  \omega^!(a_m)\rangle
  =\langle(m,\beta(\omega_0(m))),(m,\omega(a_m))\rangle\\
  &=\langle\beta(\omega_0(m)),
  \omega(a_m)\rangle=\langle\omega^\star(\beta)(m),a_m\rangle.
\end{align*}

The map $\omega^\star$ is obviously a morphism of modules over
$\omega_0^*$: for $\beta\in\Gamma(B^*)$ and $f\in C^\infty(N)$, we
easily find $\omega^\star(f\cdot \beta)=\omega_0^*f\cdot\omega^\star(\beta)$.

Next we need to show that a morphism $\mu^\star\colon
\Gamma(B^*)\to\Gamma(A^*)$ of modules over \linebreak $\mu_0^*\colon
C^\infty(N)\to C^\infty(M)$, for $\mu_0\colon M\to N$ smooth, induces
a morphism $A\to B$ of vector bundles over $\mu_0\colon M\to N$.
Choose $a_m$ in the fiber of $A$ over $m$ and define $\mu(a_m)\in
B_{\mu_0(m)}$ by
\[\langle \beta(\mu_0(m)), \mu(a_m)\rangle=\langle\mu^\star(\beta)(m), a_m\rangle 
\]
for all $\beta\in \Gamma(B^*)$.

The smoothness of $\mu$ is seen as follows: let $b_1,\ldots,b_n$ be
local basis fields for $B$ and let $\beta_1,\ldots,\beta_n$ be the
dual basis fields. Then for each $a_m\in A$, $\mu(a_m)$ can be written
$\sum_{i=1}^n\langle\mu(a_m),\beta_i(\mu_0(m))\rangle b_i(\mu_0(m))$.
Since each $\langle\mu(a_m),\beta_i(\mu_0(m))\rangle$ equals
$\ell_{\mu^\star(\beta_i)}(a_m)$, we find that locally,
$\mu=\sum_{i=1}^n\ell_{\mu^\star(\beta_i)}\cdot (b_i\circ\mu_0\circ
q_A)$.  To prove that $\mu$ is a vector bundle morphism, we need to
check that $\langle \beta(\mu_0(m)), \mu(a_m)\rangle$ only depends on
the value of $\beta$ at $\mu_0(m)$, or in other words, that if $\beta$
vanishes at $\mu_0(m)$, then $\langle \beta(\mu_0(m)),
\mu(a_m)\rangle=0$. Without loss of generality, assume that $\beta$
can be written as $f\cdot \beta'$ with $\beta'\in \Gamma(B^*)$ and
$f\in C^\infty(N)$ with $f(\mu_0(m))=0$.  Then $\langle
\beta(\mu_0(m)), \mu(a_m)\rangle=\langle
f(\mu_0(m))\mu^\star(\beta')(m), a_m\rangle=0$.  The morphism $\mu$ of
vector bundles clearly induces $\mu^\star$ on the sets of sections of
the duals, and vice-versa.

\def\cprime{$'$} \def\polhk#1{\setbox0=\hbox{#1}{\ooalign{\hidewidth
  \lower1.5ex\hbox{`}\hidewidth\crcr\unhbox0}}} \def\cprime{$'$}
  \def\cprime{$'$}

\bigskip
\noindent
\address{Mathematisches Institut,\\
Georg-August Universit\"at G\"ottingen.\\
\email{madeleine.jotz-lean@mathematik.uni-goettingen.de}}

\end{document}